\numberwithin{equation}{section}
\numberwithin{figure}{section}
\theoremstyle{plain}
\newtheorem{thm}{\protect\theoremname}[section]
\theoremstyle{definition}
\newtheorem{defn}[thm]{\protect\definitionname}
\theoremstyle{remark}
\newtheorem*{rem*}{\protect\remarkname}
\theoremstyle{plain}
\newtheorem{prop}[thm]{\protect\propositionname}
\theoremstyle{plain}
\newtheorem{lem}[thm]{\protect\lemmaname}
\newcommand{\add}{{
  \bigskip
  \footnotesize
  \textsc{Jarkko Siltakoski,  Department of Mathematics and Statistics, P.O.Box 35, FIN-40014, University of Jyväskylä, Finland}\par\nopagebreak
  \textit{E-mail address}: jarkko.siltakoski@jyu.fi
}}
\providecommand{\definitionname}{Definition}
\providecommand{\lemmaname}{Lemma}
\providecommand{\propositionname}{Proposition}
\providecommand{\remarkname}{Remark}
\providecommand{\theoremname}{Theorem}
\begin{document}
\global\long\def\d{\,d}%
\global\long\def\tr{\mathrm{tr}}%
\global\long\def\supp{\operatorname{spt}}%
\global\long\def\div{\operatorname{div}}%
\global\long\def\osc{\operatorname{osc}}%
\global\long\def\essup{\esssup}%
\global\long\def\aint{\dashint}%
\global\long\def\essinf{\esssinf}%
\global\long\def\essliminf{\esssliminf}%
\global\long\def\sa{|}%
\global\long\def\sgn{\operatorname{sgn}}%
 
\global\long\def\dist{\operatorname{dist}}%
\global\long\def\diam{\operatorname{diam}}%
\global\long\def\pmin{p_{\text{min}}}%
\global\long\def\pmax{p_{\text{max}}}%
\excludeversion{details}\keywords{non-divergence form equation, normalized equation, $p$-Laplace, Hölder gradient regularity, viscosity solution, inhomogeneous equation}\subjclass[2020]{35J92, 35J70, 35J75, 35D40}\date{October 2021}
\title[hölder gradient regularity]{Hölder gradient regularity for the inhomogeneous normalized $p(x)$-Laplace
equation}
\begin{abstract}
We prove the local gradient Hölder regularity of viscosity solutions
to the inhomogeneous normalized $p(x)$-Laplace equation
\[
-\Delta_{p(x)}^{N}u=f(x),
\]
where $p$ is Lipschitz continuous, $\inf p>1$, and $f$ is continuous
and bounded. 
\end{abstract}

\author{Jarkko Siltakoski}
\maketitle

\section{Introduction}

We study the \textit{inhomogeneous normalized $p(x)$-Laplace equation}

\begin{equation}
-\Delta_{p(x)}^{N}u=f(x)\quad\text{in }B_{1},\label{eq:normalized p(x)}
\end{equation}
where
\[
-\Delta_{p(x)}^{N}u:=-\Delta u-(p(x)-2)\frac{\left\langle D^{2}uDu,Du\right\rangle }{\left|Du\right|^{2}}
\]
is the \textit{normalized $p(x)$-Laplacian}, $p:B_{1}\rightarrow\mathbb{R}$
is Lipschitz continuous, $1<p_{\min}:=\inf_{B_{1}}p\leq\sup_{B_{1}}p=:p_{\max}$
and $f\in C(B_{1})$ is bounded. Our main result is that viscosity
solutions to (\ref{eq:normalized p(x)}) are locally $C^{1,\alpha}$-regular.

Normalized equations have attracted a significant amount of interest
during the last 15 years. Their study is partially motivated by their
connection to game theory. Roughly speaking, the value function of
certain stochastic tug-of-war games converges uniformly up to a subsequence
to a viscosity solution of a normalized equation as the step-size
of the game approaches zero \cite{peresShefield08,manfrediParviainenRossi10,manfrediParviainenRossi12,banerjeeGarofalo15,blancRossi19}.
In particular, a game with space-dependent probabilities leads to
the normalized $p(x)$-Laplace equation \cite{arroyoHeinoParviainen17}
and games with running pay-offs lead to inhomogeneous equations \cite{ruosteenoja16}.
In addition to game theory, normalized equations have been studied
for example in the context of image processing \cite{does11,elmoatazToutainTenbrinck15}.

The variable $p(x)$ in (\ref{eq:normalized p(x)}) has an effect
that may not be immediately obvious: If we formally multiply the equation
by $\left|Du\right|^{p(x)-2}$ and rewrite it in a divergence form,
then a logarithm term appears and we arrive at the expression
\begin{equation}
-\div(\left|Du\right|^{p(x)-2}Du)+\left|Du\right|^{p(x)-2}\log(\left|Du\right|)Du\cdot Dp=\left|Du\right|^{p(x)-2}f(x).\label{eq:strong p(x)}
\end{equation}
For $f\equiv0$, this is the so called \textit{strong $p(x)$-Laplace
equation} introduced by Adamowicz and Hästö \cite{adamowiczHasto10,adamowiczHasto11}
in connection with mappings of finite distortion. In the homogeneous
case viscosity solutions to (\ref{eq:normalized p(x)}) actually coincide
with weak solutions of (\ref{eq:strong p(x)}) \cite{siltakoski18},
yielding the $C^{1,\alpha}$-regularity of viscosity solutions as
a consequence of a result by Zhang and Zhou \cite{strongpx_regularity}. 

In the present paper our objective is to prove $C^{1,\alpha}$-regularity
of solutions to (\ref{eq:normalized p(x)}) directly using viscosity
methods. The Hölder regularity of solutions already follows from existing
general results, see \cite{krylovSafonov79,krylovSafonov80,caffarelli89,caffarelliCabre}.
More recently, Imbert and Silvestre \cite{imbertSilvestre12} proved
the gradient Hölder regularity of solutions to the elliptic equation
\[
\left|Du\right|^{\gamma}F(D^{2}u)=f,
\]
where $\gamma>0$ and Imbert, Jin and Silvestre \cite{jinsilvestre17,imbertJinSilvestre16}
obtained a similar result for the parabolic equation
\[
\partial_{t}u=\left|Du\right|^{\gamma}\Delta_{p}^{N}u,
\]
where $p>1$, $\gamma>-1$. Furthermore, Attouchi and Parviainen \cite{attouchiParv}
proved the $C^{1,\alpha}$-regularity of solutions to the inhomogeneous
equation $\partial_{t}u-\Delta_{p}^{N}u=f(x,t)$. Our proof of Hölder
gradient regularity for solutions of (\ref{eq:normalized p(x)}) is
in particular inspired by the papers \cite{jinsilvestre17} and \cite{attouchiParv}.

We point out that recently Fang and Zhang \cite{fangZhang21b} proved
the $C^{1,\alpha}$-regularity of solutions to the parabolic normalized
$p(x,t)$-Laplace equation
\begin{equation}
\partial_{t}u=\Delta_{p(x,t)}^{N}u,\label{eq:parabolic normalized p(x)}
\end{equation}
where $p\in C_{\text{loc}}^{1}$. The equation (\ref{eq:parabolic normalized p(x)})
naturally includes (\ref{eq:normalized p(x)}) if $f\equiv0$. However,
in this article we consider the inhomogeneous case and only suppose
that $p$ is Lipschitz continuous. More precisely, we have the following
theorem.
\begin{thm}
\label{thm:main-1} Suppose that $p$ is Lipschitz continuous in $B_{1}$,
$p_{\min}>1$ and $f\in C(B_{1})$ is bounded. Let $u$ be a viscosity
solution to 
\[
-\Delta_{p(x)}^{N}u=f(x)\quad\text{in }B_{1}.
\]
Then there is $\alpha(N,p_{\min},p_{\max},p_{L})\in(0,1)$ such that
\[
\left\Vert u\right\Vert _{C^{1,\alpha}(B_{1/2})}\leq C(N,p_{\min},p_{\max},p_{L},\left\Vert f\right\Vert _{L^{\infty}(B_{1})},\left\Vert u\right\Vert _{L^{\infty}(B_{1})}),
\]
where $p_{L}$ is the Lipschitz constant of $p$.
\end{thm}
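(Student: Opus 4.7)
My approach combines an Ishii--Lions type Lipschitz estimate with an iterated improvement of flatness in which the exponent is frozen at each scale, following the scheme developed by Jin--Silvestre and Attouchi--Parviainen. Away from $\{Du=0\}$ the operator $-\Delta_{p(x)}^{N}$ is uniformly elliptic with constants $\min(1,\pmin-1)$ and $\max(1,\pmax-1)$, so the two main difficulties are the discontinuity of the coefficients at the singular set and the control of the variable exponent through its Lipschitz constant $p_{L}$.

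The first step is to upgrade the Krylov--Safonov H\"older continuity of $u$ to a local Lipschitz estimate. I would apply the standard Ishii--Lions doubling of variables by studying
\[
\Phi(x,y)=u(x)-u(y)-L\omega(|x-y|)-\sigma(|x|^{2}+|y|^{2})
\]
on $B_{r}\times B_{r}$ for a suitable concave modulus $\omega$, invoking Jensen's maximum principle to obtain subjets and superjets, and exploiting uniform ellipticity to reach a contradiction for $L$ large. The bounded right-hand side is absorbed by the strict concavity of $\omega$. This yields the required quantitative bound on $\|Du\|_{L^{\infty}(B_{1/2})}$.

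For the $C^{1,\alpha}$-step I would first prove an approximation lemma by contradiction and compactness: if $v$ is a viscosity solution of $-\Delta_{\tilde{p}(x)}^{N}v=g$ in $B_{1}$ with $\|v\|_{L^{\infty}}\leq1$, $\|g\|_{L^{\infty}}\leq\delta$ and $\osc_{B_{1}}\tilde{p}\leq\delta$, then $v$ is $\varepsilon$-close in $B_{1/2}$ to a viscosity solution $h$ of the frozen constant-exponent homogeneous equation $-\Delta_{q}^{N}h=0$ with $q=\tilde{p}(0)$. Since solutions of the frozen equation enjoy universal $C^{1,\alpha}$ estimates, one may select $\lambda\in(0,1/2)$ and an affine function $\ell$ such that $\|v-\ell\|_{L^{\infty}(B_{\lambda})}\leq\lambda^{1+\alpha}$. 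Iterating via the normalized rescalings
\[
v_{k+1}(x):=\lambda^{-1-\alpha}\bigl(v_{k}(\lambda x)-\ell_{k}(\lambda x)\bigr),\qquad v_{0}:=u,
\]
and using the $1$-homogeneity of $-\Delta_{p(x)}^{N}$ in $u$, the $v_{k}$ satisfy equations of the same structural form whose exponents $p(\lambda^{k}\cdot)$ have oscillation bounded by $\lambda^{k}p_{L}$ on $B_{1}$ and whose forcing terms are of order $\lambda^{k(1-\alpha)}\|f\|_{L^{\infty}}$. Choosing $\alpha$ small enough keeps both smallness assumptions of the approximation lemma valid at every step, so the affine approximations $\ell_{k}$ form a Cauchy sequence whose limit is a first-order Taylor polynomial with $\alpha$-H\"older gradient.

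The main obstacle is the degeneracy of $-\Delta_{p(x)}^{N}$ at $Du=0$: the approximation lemma is only useful in a regime where the rescaled gradients are not too small, since otherwise compactness may produce a limit equation whose coefficients are genuinely undefined. I expect to close the iteration via the alternative used in Imbert--Silvestre and Jin--Silvestre, splitting at each scale into a non-degenerate regime where uniform ellipticity and smooth dependence on $Du$ give direct improvement of flatness, and a degenerate regime where gradients are so small that $\ell_{k}\equiv0$ already delivers the required decay. A secondary technical point is that the logarithmic term appearing in the formal divergence form (\ref{eq:strong p(x)}) is invisible to the viscosity framework, yet the Lipschitz assumption on $p$ is essential both for the approximation lemma, via $\osc_{B_{r}}p\leq rp_{L}$, and for keeping all constants uniform through the iteration.
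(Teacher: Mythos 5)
There is a genuine gap at the heart of your iteration. After you subtract an affine function $\ell_k$ with slope $a_k$ from a solution, the new function does \emph{not} solve an equation ``of the same structural form'': the normalized operator is $1$-homogeneous under multiplication by constants, but it is not invariant under subtracting affine functions, because of the nonlinear dependence on the gradient. Writing $w=v-\ell_k$, the relevant equation becomes
\[
-\Delta w-(p(x)-2)\frac{\left\langle D^{2}w\,(Dw+a_{k}),Dw+a_{k}\right\rangle }{\left|Dw+a_{k}\right|^{2}}=g,
\]
whose degenerate set sits at $Dw=-a_k$, not at $Dw=0$. So the class of unshifted equations $-\Delta_{\tilde p(x)}^{N}v=g$, for which you state your approximation lemma, is not preserved by the rescalings $v_{k+1}=\lambda^{-1-\alpha}(v_k(\lambda\cdot)-\ell_k(\lambda\cdot))$, and the lemma cannot be invoked at the second step already. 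To close the scheme you must prove the approximation lemma, and the ``universal'' regularity of the limiting homogeneous problems, for the whole family of shifted equations with an arbitrary vector $q\in\mathbb{R}^{N}$, \emph{uniformly in $q$} (including the limit $|q|\to\infty$, where the equation degenerates into a linear nondivergence equation $-\Delta h-(p-2)\langle D^{2}h\,e,e\rangle=0$). Establishing uniform-in-$q$ (and uniform-in-$\varepsilon$) $C^{1,\alpha}$ estimates for the shifted homogeneous equation is precisely the technical core of the paper (Section \ref{sec:regularized homogeneous}): one differentiates the regularized equation, shows that a function of $Du$ is a supersolution of a linear equation, applies the weak Harnack inequality to get improvement of oscillation of the gradient, and combines this with Savin's small perturbation theorem. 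Your closing remark about the Imbert--Silvestre degenerate/non-degenerate dichotomy points in the right direction, but it is never reconciled with the unshifted formulation of your key lemma, and in the degenerate regime simply taking $\ell_k\equiv 0$ does not by itself yield the oscillation decay of the gradient; the dichotomy works only once the shifted family is under control.

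A second, smaller point: your plan to run the compactness argument entirely at the viscosity level would, if completed, sidestep the uniqueness problem that the paper must confront (comparison for $-\Delta_{p(x)}^{N}u=f$ is open for variable $p$, which is why the paper adds the zeroth-order term $u-v$ to restore comparison and then regularizes). But note the tension: the uniform estimates for the shifted homogeneous family are obtained by differentiating the equation, i.e.\ for smooth solutions of the $\varepsilon$-regularized problem, and transferring them back to the given viscosity solution is exactly where stability plus some comparison device is needed. As written, your proposal neither proves the shifted estimates directly for viscosity solutions nor explains how the regularized estimates are passed to $u$, so this step also needs to be supplied.
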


The proof of Theorem \ref{thm:main-1} is based on suitable uniform
$C^{1,\alpha}$-regularity estimates for solutions of the regularized
equation 
\begin{equation}
-\Delta v-(p_{\varepsilon}(x)-2)\frac{\left\langle D^{2}vDv,Dv\right\rangle }{\left|Dv\right|^{2}+\varepsilon^{2}}=g(x),\label{eq:intro regularized}
\end{equation}
where it is assumed that $g$ is continuous and $p_{\varepsilon}$
is smooth. In particular, we show estimates that are independent of
$\varepsilon$ and only depend on $N$, $\sup p$, $\inf p$, $\left\Vert Dp_{\varepsilon}\right\Vert _{L^{\infty}}$
and $\left\Vert g\right\Vert _{L^{\infty}}$. To prove such estimates,
we first derive estimates for the perturbed homogeneous equation
\begin{equation}
-\Delta v-(p_{\varepsilon}(x)-2)\frac{\left\langle D^{2}v(Dv+q),Dv+q\right\rangle }{\left|Dv\right|^{2}+\varepsilon^{2}}=0,\label{eq:intro homogeneous}
\end{equation}
where $q\in\mathbb{R}^{N}$. Roughly speaking, $C^{1,\alpha}$-estimates
for solutions of (\ref{eq:intro homogeneous}) are based on ``improvement
of oscillation'' which is obtained by differentiating the equation
and observing that a function depending on the gradient of the solution
is a supersolution to a linear equation. The uniform $C^{1,\alpha}$-estimates
for solutions of (\ref{eq:intro homogeneous}) then yield uniform
estimates for the inhomogeneous equation (\ref{eq:intro regularized})
by an adaption of the arguments in \cite{imbertSilvestre12,attouchiParv}.

With the \textit{a priori} regularity estimates at hand, the plan
is to let $\varepsilon\rightarrow0$ and show that the estimates pass
on to solutions of (\ref{eq:normalized p(x)}). A problem is caused
by the fact that, to the best of our knowledge, uniqueness of solutions
to (\ref{eq:normalized p(x)}) is an open problem for variable $p(x)$
and even for constant $p$ if $f$ is allowed to change signs. To
deal with this, we fix a solution $u_{0}\in C(\overline{B}_{1})$
to (\ref{eq:normalized p(x)}) and consider the Dirichlet problem
\begin{equation}
-\Delta_{p(x)}^{N}u=f(x)-u_{0}(x)-u\quad\text{in }B_{1}\label{eq:intro dirichlet}
\end{equation}
with boundary data $u=u_{0}$ on $\partial B_{1}$. For this equation
the comparison principle holds and thus $u_{0}$ is the unique solution.
We then consider the approximate problem
\begin{equation}
-\Delta u_{\varepsilon}-(p_{\varepsilon}(x)-2)\frac{\left\langle D^{2}u_{\varepsilon}Du_{\varepsilon},Du_{\varepsilon}\right\rangle }{\left|Du_{\varepsilon}\right|^{2}+\varepsilon^{2}}=f_{\varepsilon}(x)-u_{0,\varepsilon}(x)-u_{\varepsilon}\label{eq:intro regularized 2}
\end{equation}
with boundary data $u_{\varepsilon}=u_{0}$ on $\partial B_{1}$ and
where $p_{\varepsilon},f_{\varepsilon},u_{0,\varepsilon}\in C^{\infty}(B_{1})$
are such that $p\rightarrow p_{\varepsilon}$, $f_{\varepsilon}\rightarrow f$
and $u_{0,\varepsilon}\rightarrow u_{0}$ uniformly in $B_{1}$ and
$\left\Vert Dp_{\varepsilon}\right\Vert _{L^{\infty}(B_{1})}\leq\left\Vert Dp\right\Vert _{L^{\infty}(B_{1})}$.
As the equation (\ref{eq:intro regularized 2}) is uniformly elliptic
quasilinear equation with smooth coefficients, the solution $u_{\varepsilon}$
exists in the classical sense by standard theory. Since $u_{\varepsilon}$
also solves (\ref{eq:intro regularized}) with $g(x)=f_{\varepsilon}(x)-u_{0,\varepsilon}(x)-u_{\varepsilon}(x)$,
it satisfies the uniform $C^{1,\alpha}$-regularity estimate. We then
let $\varepsilon\rightarrow0$ and use stability and comparison principles
to show that $u_{0}$ inherits the regularity estimate.

For other related results, see for example the works of Attouchi,
Parviainen and Ruosteenoja \cite{OptimalC1} on the normalized $p$-Poisson
problem $-\Delta_{p}^{N}u=f$, Attouchi and Ruosteenoja \cite{attouchiRuosteenoja18,attouchiRuosteenoja20,attouchi20}
on the equation $-\left|Du\right|^{\gamma}\Delta_{p}^{N}u=f$ and
its parabolic version, De Filippis \cite{deflippis21} on the double
phase problem $(\left|Du\right|^{q}+a(x)\left|Du\right|^{s})F(D^{2}u)=f(x)$
and Fang and Zhang \cite{fangZhang21} on the parabolic double phase
problem $\partial_{t}u=(\left|Du\right|^{q}+a(x,t)\left|Du\right|^{s})\Delta_{p}^{N}u$.
We also mention the paper by Bronzi, Pimentel, Rampasso and Teixeira
\cite{bronziPimentelRampassoTeixeira} where they consider fully nonlinear
variable exponent equations of the type $\left|Du\right|^{\theta(x)}F(D^{2}u)=0$. 

The paper is organized as follows: Section 2 is dedicated to preliminaries,
Sections 3 and 4 contain $C^{1,\alpha}$-regularity estimates for
equations (\ref{eq:intro homogeneous}) and (\ref{eq:intro regularized 2}),
and Section 5 contains the proof of Theorem (\ref{thm:main-1}). Finally,
the Appendix contains an uniform Lipschitz estimate for the equations
studied in this paper and a comparison principle for equation (\ref{eq:intro dirichlet}).

\section{Preliminaries}

\subsection{Notation}

We denote by $B_{R}\subset\mathbb{R}^{N}$ an open ball of radius
$R>0$ that is centered at the origin in the $N$-dimensional Euclidean
space, $N\geq1$. The set of symmetric $N\times N$ matrices is denoted
by $S^{N}$. For $X,Y\in S^{N}$, we write $X\leq Y$ if $X-Y$ is
negative semidefinite. We also denote the smallest eigenvalue of $X$
by $\lambda_{\min}(X)$ and the largest by $\lambda_{\max}(X)$ and
set
\[
\left\Vert X\right\Vert :=\sup_{\xi\in B_{1}}\left|X\xi\right|=\sup\left\{ \left|\lambda\right|:\lambda\text{ is an eigenvalue of }X\right\} .
\]
We use the notation $C(a_{1},\ldots,a_{k})$ to denote a constant
$C$ that may change from line to line but depends only on $a_{1},\ldots,a_{k}$.
For convenience we often use $C(\hat{p})$ to mean that the constant
may depend on $p_{\min}$, $p_{\max}$ and the Lipschitz constant
$p_{L}$ of $p$.

For $\alpha\in(0,1)$, we denote by $C^{\alpha}(B_{R})$ the set of
all functions $u:B_{R}\rightarrow\mathbb{R}$ with finite Hölder norm
\[
\left\Vert u\right\Vert _{C^{\alpha}(B_{R})}:=\left\Vert u\right\Vert _{L^{\infty}(B_{R})}+\left[u\right]_{C^{\alpha}(B_{R})},\text{\ensuremath{\quad}where }\left[u\right]_{C^{\alpha}(B_{R})}:=\sup_{x,y\in B_{R}}\frac{\left|u(x)-u(y)\right|}{\left|x-y\right|^{\alpha}}.
\]
Similarly, we denote by $C^{1,\alpha}(B_{R})$ the set of all functions
for which the norm
\[
\left\Vert u\right\Vert _{C^{1,\alpha}(B_{R})}:=\left\Vert u\right\Vert _{C^{\alpha}(B_{R})}+\left\Vert Du\right\Vert _{C^{\alpha}(B_{R})}
\]
 is finite.

\subsection{Viscosity solutions}

Viscosity solutions are defined using smooth test functions that touch
the solution from above or below. If $u,\varphi:\mathbb{R}^{N}\rightarrow\mathbb{R}$
and $x\in\mathbb{R}^{N}$ are such that $\varphi(x)=u(x)$ and $\varphi(y)<u(y)$
for $y\not=x_{0}$, then we say that \textit{$\varphi$ touches $u$
from below at $x_{0}$}.
\begin{defn}
\label{def:viscosity solutions} Let $\Omega\subset\mathbb{R}^{N}$
be a bounded domain. Suppose that $f:\Omega\times\mathbb{R}\rightarrow\mathbb{R}$
is continuous. A lower semicontinuous function $u:\Omega\rightarrow\mathbb{R}$
is a \textit{viscosity supersolution} to 
\[
-\Delta_{p(x)}^{N}u\geq f(x,u)\quad\text{in }\Omega
\]
if the following holds: Whenever $\varphi\in C^{2}(\Omega)$ touches
$u$ from below at $x\in\Omega$ and $D\varphi(x)\not=0$, we have
\[
-\Delta\varphi(x)-(p(x)-2)\frac{\left\langle D^{2}\varphi(x)D\varphi(x),D\varphi(x)\right\rangle }{\left|D\varphi(x)\right|^{2}}\geq f(x,u(x))
\]
and if $D\varphi(x)=0$, then
\[
-\Delta\varphi(x)-(p(x)-2)\left\langle D^{2}\varphi(x)\eta,\eta\right\rangle \ge f(x,u(x))\quad\text{for some }\eta\in\overline{B}_{1}.
\]
Analogously, a lower semicontinuous function $u:\Omega\rightarrow\mathbb{R}$
is a viscosity subsolution if the above inequalities hold reversed
whenever $\varphi$ touches $u$ from above. Finally, we say that
$u$ is a \textit{viscosity solution} if it is both viscosity sub-
and supersolution.
\end{defn}

\begin{rem*}
The special treatment of the vanishing gradient in Definition \ref{def:viscosity solutions}
is needed because of the singularity of the equation. Definition \ref{def:viscosity solutions}
is essentially a relaxed version of the standard definition in \cite{userguide}
which is based on the so called semicontinuous envelopes. In the standard
definition one would require that if $\varphi$ touches a viscosity
supersolution $u$ from below at $x$, then
\[
\begin{cases}
-\Delta_{p(x)}^{N}\varphi(x)\geq f(x,u(x)) & \text{if }D\varphi(x)\not=0,\\
-\Delta\varphi(x)-(p(x)-2)\lambda_{\min}(D^{2}\varphi(x))\geq f(x,u(x)) & \text{if }D\varphi(x)=0\text{ and }p(x)\geq2,\\
-\Delta\varphi(x)-(p(x)-2)\lambda_{\max}(D^{2}\varphi(x))\geq f(x,u(x)) & \text{if }D\varphi(x)=0\text{ and }p(x)<2.
\end{cases}
\]
Clearly, if $u$ is a viscosity supersolution in this sense, then
it is also a viscosity supersolution in the sense of Definition \ref{def:viscosity solutions}.
\end{rem*}

\section{Hölder gradient estimates for the regularized homogeneous equation\label{sec:regularized homogeneous}}

In this section we prove $C^{1,\alpha}$-regularity estimates for
solutions to the equation
\begin{equation}
-\Delta u-(p(x)-2)\frac{\left\langle D^{2}u(Du+q),Du+q\right\rangle }{\left|Du+q\right|^{2}+\varepsilon^{2}}=0\quad\text{in }B_{1},\label{eq:regularized homogeneous}
\end{equation}
where $p:B_{1}\rightarrow B_{1}$ is Lipschitz, $p_{\min}>1$, $\varepsilon>0$
and $q\in\mathbb{R}^{N}$. Our objective is to obtain estimates that
are independent of $q$ and $\varepsilon$. Observe that (\ref{eq:regularized homogeneous})
is a uniformly elliptic quasilinear equation with smooth coefficients.
Viscosity solutions to (\ref{eq:regularized homogeneous}) can be
defined in the standard way and they are smooth if $p$ is smooth.
\begin{prop}
\label{prop:c infty} Suppose that $p$ is smooth. Let $u$ be a viscosity
solution to (\ref{eq:regularized homogeneous}) in $B_{1}$. Then
$u\in C^{\infty}(B_{1})$.
\end{prop}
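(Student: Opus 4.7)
The plan is to rewrite (\ref{eq:regularized homogeneous}) as a fully nonlinear uniformly elliptic equation with smooth structure and then run a standard elliptic bootstrap. Setting
\[
A(x,\xi):=I+(p(x)-2)\frac{(\xi+q)\otimes(\xi+q)}{\left|\xi+q\right|^{2}+\varepsilon^{2}},
\]
the equation reads $F(x,Du,D^{2}u)=0$ with $F(x,\xi,M):=-\tr(A(x,\xi)M)$. Since $\left|\xi+q\right|^{2}+\varepsilon^{2}\ge\varepsilon^{2}>0$ and $p$ is smooth, $F$ is smooth in $(x,\xi,M)$. Moreover, the eigenvalues of $A(x,\xi)$ lie between $\min(1,p_{\min}-1)$ and $\max(1,p_{\max}-1)$ uniformly in $\xi$, so $F$ is uniformly elliptic in $M$ with ellipticity constants depending only on $p_{\min}$ and $p_{\max}$.

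The first step is to obtain local Lipschitz regularity for $u$; this is supplied by the uniform Lipschitz estimate proved in the Appendix of this paper. Next, since $F$ is uniformly elliptic and smooth (in particular $C^{\alpha}$ in $x$ and $\xi$), Caffarelli's $C^{1,\alpha}$-regularity theory for viscosity solutions of fully nonlinear uniformly elliptic equations (see \cite{caffarelliCabre}) upgrades $u$ to $C^{1,\alpha}_{\text{loc}}(B_{1})$ for some $\alpha\in(0,1)$. With this in hand, the coefficient matrix $\tilde{a}_{ij}(x):=A_{ij}(x,Du(x))$ is locally H\"older continuous, and $u$ is a viscosity, hence by uniform ellipticity and Calder\'on-Zygmund theory a strong, solution of the linear equation $\tilde{a}_{ij}(x)D_{ij}u=0$. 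Schauder estimates then yield $u\in C^{2,\alpha}_{\text{loc}}(B_{1})$.

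From here the bootstrap is routine. Differentiating $F(x,Du,D^{2}u)=0$ in $x_{k}$ (which is now justified classically), the function $w:=\partial_{k}u$ satisfies a linear uniformly elliptic equation
\[
\tilde{a}_{ij}(x)D_{ij}w+\tilde{b}_{j}(x)D_{j}w=\tilde{g}(x),
\]
whose coefficients involve only $p$, $Dp$, $Du$ and $D^{2}u$, all of class $C^{\alpha}_{\text{loc}}$. Schauder once more gives $w\in C^{2,\alpha}_{\text{loc}}$, i.e.\ $u\in C^{3,\alpha}_{\text{loc}}$. Iterating, every derivative of $u$ lies in $C^{\alpha}_{\text{loc}}$, so $u\in C^{\infty}(B_{1})$.

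The only genuinely delicate step is the transition from a mere viscosity solution to some classical regularity on which to hang Schauder theory; concretely this means the Lipschitz estimate of the Appendix combined with Caffarelli's $C^{1,\alpha}$ regularity for uniformly elliptic viscosity solutions. Every subsequent step is a textbook application of linear elliptic regularity, so no further difficulties arise.
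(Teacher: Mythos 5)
Your overall strategy is a direct bootstrap on the viscosity solution, which is genuinely different from the paper's argument: the paper does not prove regularity of $u$ directly, but instead invokes classical quasilinear theory (\cite[Theorems 15.18 and 13.6]{gilbargTrudinger01} together with the Schauder estimates) to produce a \emph{smooth classical solution} of the corresponding Dirichlet problem, and then identifies it with the viscosity solution via the comparison principle of \cite{kawohlNikolai98}. Your later steps (freezing the coefficients once $Du$ is H\"older, Schauder, differentiating the equation and iterating) are fine as written.

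The gap is the $C^{1,\alpha}$ step. The operator here is $-\tr\bigl(A(x,Du)\,D^{2}u\bigr)$ with coefficients depending on the \emph{gradient}, whereas Caffarelli's $C^{1,\alpha}$ theory in \cite{caffarelliCabre} is formulated for equations of the form $F(D^{2}u,x)=f(x)$ (no gradient dependence in the coefficients, with a smallness/H\"older condition on the oscillation in $x$). You cannot reduce to that setting by writing $\tilde a_{ij}(x):=A_{ij}(x,Du(x))$ at this stage: before you know $Du$ is continuous, $\tilde a_{ij}$ is merely bounded measurable, which only yields $C^{\alpha}$ (Krylov--Safonov), and using it to get $C^{1,\alpha}$ is circular; moreover, passing from ``$u$ solves the quasilinear equation in the viscosity sense'' to ``$u$ solves the frozen linear equation in the viscosity sense'' is itself only justified once $u\in C^{1}$. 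To close the gap you must either invoke a H\"older gradient estimate valid for uniformly elliptic equations $F(x,u,Du,D^{2}u)=0$ with gradient dependence under natural structure conditions (Trudinger/Ishii--Lions type results, not the theorem you cite), or argue as the paper does: solve the Dirichlet problem classically (existence and $C^{2,\alpha}$ regularity from the quasilinear theory in \cite{gilbargTrudinger01}, then bootstrap with Schauder), and conclude $u$ coincides with this smooth solution by the comparison principle \cite[Theorem 3]{kawohlNikolai98}. A further minor point: the Lipschitz estimate of the Appendix (Theorem \ref{thm:Lipschitz estimate}) is only stated for $q=0$ or $\left|q\right|>\nu_{0}$, while the proposition allows arbitrary $q\in\mathbb{R}^{N}$, so your first step does not cover all cases either (for fixed $\varepsilon>0$ it is also unnecessary once the $C^{1,\alpha}$ step is handled correctly).
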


It follows from classical theory that the corresponding Dirichlet
problem admits a smooth solution (see \cite[Theorems 15.18 and 13.6]{gilbargTrudinger01}
and the Schauder estimates \cite[Theorem 6.17]{gilbargTrudinger01}).
The viscosity solution $u$ coincides with the smooth solution by
a comparison principle \cite[Theorem 3]{kawohlNikolai98}.

\subsection{Improvement of oscillation}

Our regularity estimates for solutions of (\ref{eq:regularized homogeneous})
are based on improvement of oscillation. We first prove such a result
for the linear equation
\begin{equation}
-\tr(G(x)D^{2}u)=f\quad\text{in }B_{1},\label{eq:linear equation}
\end{equation}
where $f\in C^{1}(B_{1})$ is bounded, $G(x)\in S^{N}$ and there
are constants $0<\lambda<\varLambda<\infty$ such that the eigenvalues
of $G(x)$ are in $[\lambda,\varLambda]$ for all $x\in B_{1}$. The
result is based on the following rescaled version of the weak Harnack
inequality found in \cite[Theorem 4.8]{caffarelliCabre}. Such Harnack
estimates for non-divergence form equations go back to at least Krylov
and Safonov \cite{krylovSafonov79,krylovSafonov80}.
\begin{lem}[Weak Harnack inequality]
\label{lem:weak harnack} Let $u\geq0$ be a continuous viscosity
supersolution to (\ref{eq:linear equation}) in $B_{1}$. Then there
are positive constants $C(\lambda,\varLambda,N)$ and $q(\lambda,\varLambda,N)$
such that for any $\tau<\frac{1}{4\sqrt{N}}$ we have
\begin{equation}
\tau^{-\frac{N}{q}}\left(\int_{B_{\tau}}\left|u\right|^{q}\d x\right)^{1/q}\leq C\left(\inf_{B_{2\tau}}u+\tau\left(\int_{B_{4\sqrt{N}\tau}}\left|f\right|^{N}\d x\right)^{1/N}\right).\label{eq:weak harnack}
\end{equation}
\end{lem}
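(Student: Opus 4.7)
The statement is just the weak Harnack inequality of Caffarelli--Cabré rescaled from $B_1$ to $B_{4\sqrt{N}\tau}$, so the plan is a direct change-of-variables argument. For $\tau<1/(4\sqrt{N})$, I would introduce the rescaled function
\[
v(y):=u(4\sqrt{N}\tau y),\qquad y\in B_{1},
\]
which is well-defined, continuous and nonnegative since $4\sqrt{N}\tau<1$. Because $D^{2}v(y)=(4\sqrt{N}\tau)^{2}D^{2}u(4\sqrt{N}\tau y)$, the function $v$ is a viscosity supersolution to
\[
-\tr(\tilde{G}(y)D^{2}v)=\tilde{f}(y)\quad\text{in }B_{1},
\]
where $\tilde{G}(y):=G(4\sqrt{N}\tau y)$ still has eigenvalues in $[\lambda,\Lambda]$ and $\tilde{f}(y):=(4\sqrt{N}\tau)^{2}f(4\sqrt{N}\tau y)$.

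Next I would apply the unscaled weak Harnack inequality of Caffarelli and Cabr\'e (\cite{caffarelliCabre}, Theorem~4.8) to $v$ in $B_{1}$, obtaining constants $q,C>0$ depending only on $N,\lambda,\Lambda$ such that
\[
\left(\int_{B_{1/(4\sqrt{N})}}v^{q}\d y\right)^{1/q}\leq C\left(\inf_{B_{1/(2\sqrt{N})}}v+\left(\int_{B_{1}}|\tilde{f}|^{N}\d y\right)^{1/N}\right).
\]
The specific interior radii $1/(4\sqrt{N})$ and $1/(2\sqrt{N})$ arise because after the substitution $x=4\sqrt{N}\tau y$ they will map precisely to $\tau$ and $2\tau$. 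That the inequality holds for this particular pair of compactly contained balls (rather than only, say, $B_{1/2}$ and $B_{1/4}$) is classical and follows from a finite covering/chaining argument.

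Finally I would undo the change of variables. Direct computation yields
\begin{align*}
\left(\int_{B_{1/(4\sqrt{N})}}v^{q}\d y\right)^{1/q} & =(4\sqrt{N}\tau)^{-N/q}\left(\int_{B_{\tau}}|u|^{q}\d x\right)^{1/q},\\
\left(\int_{B_{1}}|\tilde{f}|^{N}\d y\right)^{1/N} & =4\sqrt{N}\tau\left(\int_{B_{4\sqrt{N}\tau}}|f|^{N}\d x\right)^{1/N},
\end{align*}
together with $\inf_{B_{1/(2\sqrt{N})}}v=\inf_{B_{2\tau}}u$. Multiplying through by $(4\sqrt{N})^{N/q}$ and absorbing the resulting dimensional factors into $C$ produces exactly \eqref{eq:weak harnack}. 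There is no real obstacle here beyond careful bookkeeping of the scaling exponents on $\tau$; the restriction $\tau<1/(4\sqrt{N})$ is exactly what is needed so that the ambient ball $B_{4\sqrt{N}\tau}$ on which we invoke the equation still lies inside $B_{1}$.
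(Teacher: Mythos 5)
Your proof is correct in substance and rests on the same idea as the paper's: rescale so that the equation lives on a fixed domain and invoke the weak Harnack inequality of \cite{caffarelliCabre}, then undo the change of variables (your bookkeeping of the exponents, the rescaled right-hand side $\tilde f$, and the restriction $\tau<1/(4\sqrt N)$ are all right). The one real difference is the choice of dilation factor. By zooming with $4\sqrt N\tau$ you need a version of the weak Harnack inequality in $B_1$ with the integral over $B_{1/(4\sqrt N)}$ and the infimum over $B_{1/(2\sqrt N)}$; this is not the literal statement of \cite[Theorem 4.8]{caffarelliCabre}, which is formulated for the fixed cubes $Q_{1/4}\subset Q_{1/2}\subset Q_1$, so your appeal to a ``finite covering/chaining argument'' is an extra step that you assert but do not carry out. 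That step is indeed classical (cover $\overline B_{1/(2\sqrt N)}$ by small balls on which the scaled-and-translated Theorem 4.8 applies, propagate the infima along overlapping balls, and sum over a finite cover of $B_{1/(4\sqrt N)}$; the constant then only picks up a dimensional factor), but it deserves at least a sketch. The paper avoids it altogether by scaling with $S=8\tau$ instead: then the rescaled function is a supersolution in $B_{\sqrt N/2}\supset Q_1$, Theorem 4.8 applies verbatim with $B_{1/8}\subset Q_{1/4}$, $Q_{1/2}\subset B_{1/4}$, $Q_1\subset B_{\sqrt N/2}$, and scaling back lands exactly on $B_\tau$, $B_{2\tau}$ and $B_{4\sqrt N\tau}$. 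So either justify the non-standard radii by the covering argument, or simply change the dilation factor to $8\tau$ as in the paper and the extra lemma becomes unnecessary.
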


\begin{proof}
Suppose that $\tau<\frac{1}{4\sqrt{N}}$ and set $S:=8\tau$. Define
the function $v:B_{\sqrt{N}/2}\rightarrow\mathbb{R}$ by
\begin{align*}
v(x) & :=u(Sx)
\end{align*}
and set
\[
\tilde{G}(x):=G(Sx)\quad\text{and}\quad\tilde{f}(x):=S^{2}f(Sx).
\]
Then, if $\varphi\in C^{2}$ touches $v$ from below at $x\in B_{\sqrt{N}/2}$,
the function $\phi(x):=\varphi(x/S)$ touches $u$ from below at $Sx$.
Therefore
\[
-\tr(G(Sx)D^{2}\phi(Sx))\geq f(Sx).
\]
Since $D^{2}\phi(Sx)=S^{-2}D^{2}\varphi(x)$, this implies that
\[
-\tr(G(Sx)D^{2}\varphi(x))\geq S^{2}f(Sx).
\]
Thus $v$ is a viscosity supersolution to
\[
-\tr(\tilde{G}(x)D^{2}v)\geq\tilde{f}(x)\quad\text{in }B_{\sqrt{N}/2}.
\]
We denote by $Q_{R}$ a cube with side-length $R/2$. Since $Q_{1}\subset B_{\sqrt{N}/2}$,
it follows from \cite[Theorem 4.8]{caffarelliCabre} that there are
$q(\lambda,\varLambda,N)$ and $C(\lambda,\varLambda,N)$ such that
\begin{align*}
\left(\int_{B_{1/8}}\left|v\right|^{q}\d x\right)^{1/q}\leq\left(\int_{Q_{1/4}}\left|v\right|^{q}\d x\right)^{1/q} & \leq C\left(\inf_{Q_{1/2}}v+\left(\int_{Q_{1}}|\tilde{f}|^{N}\d x\right)^{1/N}\right)\\
 & \leq C\left(\inf_{B_{1/4}}v+\left(\int_{B_{\sqrt{N}/2}}|\tilde{f}|^{N}\d x\right)^{1/N}\right).
\end{align*}
By the change of variables formula we have
\begin{align*}
\int_{B_{1/8}}\left|v\right|^{q}\d x= & \int_{B_{1/8}}\left|u(Sx)\right|^{q}\d x=S^{-N}\int_{B_{S/8}}\left|u(x)\right|^{q}\d x
\end{align*}
and
\[
\int_{B_{\sqrt{N}/2}}|\tilde{f}|^{N}\d x=S^{2N}\int_{B_{\sqrt{N}/2}}\left|f(Sx)\right|^{N}\d x=S^{N}\int_{B_{S\sqrt{N}/2}}\left|f(x)\right|^{N}\d x.
\]
Recalling that $S=8\tau$, we get
\[
8^{-\frac{N}{q}}\tau^{-\frac{N}{q}}\left(\int_{B_{\tau}}\left|u(x)\right|^{q}\d x\right)^{1/q}\leq C\left(\inf_{B_{2\tau}}u+8\tau\left(\int_{B_{S\sqrt{N}/2}}\left|f(x)\right|^{N}\d x\right)^{1/N}\right).
\]
Absorbing $8^{\frac{N}{q}}$ into the constant, we obtain the claim.
\end{proof}
\begin{lem}[Improvement of oscillation for the linear equation]
\label{lem:imposc linear} Let $u\geq0$ be a continuous viscosity
supersolution to $\eqref{eq:linear equation}$ in $B_{1}$ and $\mu,l>0$.
Then there are positive constants $\tau(\lambda,\varLambda,N,\mu,l,\left\Vert f\right\Vert _{L^{\infty}(B_{1})})$
and $\theta(\lambda,\varLambda,N,\mu,l)$ such that if
\begin{equation}
\left|\left\{ x\in B_{\tau}:u\ge l\right\} \right|>\mu\left|B_{\tau}\right|,\label{eq:improvement of oscillation 1}
\end{equation}
then we have
\[
u\geq\theta\quad\text{in }B_{\tau}.
\]
\end{lem}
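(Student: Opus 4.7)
The natural plan is to apply the weak Harnack inequality (Lemma \ref{lem:weak harnack}) with parameter $\tau$ to extract a lower bound on the infimum of $u$ from the measure hypothesis \eqref{eq:improvement of oscillation 1}. The point is that the weak Harnack produces an inequality between the $L^q$-norm of $u$ over $B_\tau$ and the infimum of $u$ over $B_{2\tau}$, and a lower bound on $\inf_{B_{2\tau}} u$ immediately yields a lower bound on $u$ throughout the smaller ball $B_\tau \subset B_{2\tau}$.

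First I would bound the $L^q$-norm on the left-hand side of \eqref{eq:weak harnack} from below using the hypothesis. Since $u \geq l$ on a subset of $B_\tau$ of measure at least $\mu |B_\tau|$, we get
\[
\left(\int_{B_\tau} |u|^q \, dx\right)^{1/q} \geq l \, \bigl(\mu |B_\tau|\bigr)^{1/q} = l\,\mu^{1/q} |B_1|^{1/q}\,\tau^{N/q},
\]
so the left side of \eqref{eq:weak harnack} is at least $l\,\mu^{1/q} |B_1|^{1/q}$. On the right-hand side, I would estimate the integral of $|f|^N$ by $\|f\|_{L^\infty(B_1)}^N |B_{4\sqrt N\,\tau}|$, giving a term of order $\tau \cdot C(N)\,\|f\|_{L^\infty(B_1)}$. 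Combining these, the weak Harnack inequality yields
\[
l\,\mu^{1/q}|B_1|^{1/q} \leq C(\lambda,\Lambda,N)\Bigl(\inf_{B_{2\tau}} u + \tau\, C(N)\,\|f\|_{L^\infty(B_1)}\Bigr).
\]

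Next I would choose the parameters. Set
\[
\theta := \frac{l\,\mu^{1/q}|B_1|^{1/q}}{2\,C(\lambda,\Lambda,N)},
\]
which depends only on $\lambda,\Lambda,N,\mu,l$ (note $q = q(\lambda,\Lambda,N)$). Then pick $\tau$ small enough to satisfy both $\tau < 1/(4\sqrt N)$ and
\[
\tau\, C(N)\,\|f\|_{L^\infty(B_1)} \leq \theta/C(\lambda,\Lambda,N),
\]
so that $\tau$ depends on $\lambda,\Lambda,N,\mu,l,\|f\|_{L^\infty(B_1)}$ as required. Inserting this back gives $\inf_{B_{2\tau}} u \geq \theta$, and hence $u \geq \theta$ in $B_\tau$.

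There is really no serious obstacle here beyond bookkeeping: the weak Harnack inequality does all the heavy lifting, and the argument is a standard measure-to-pointwise conversion. The only point that requires some care is ensuring that $\theta$ remains independent of $\|f\|_{L^\infty(B_1)}$; this is achieved precisely by absorbing the $f$-term into half of the $L^q$-lower bound via the smallness of $\tau$, rather than into $\theta$ itself.
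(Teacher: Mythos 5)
Your proof is correct and follows essentially the same route as the paper: rescaled weak Harnack, a lower bound on the $L^q$-term from the measure hypothesis, absorption of the $f$-term by choosing $\tau$ small (which is why only $\tau$, not $\theta$, depends on $\left\Vert f\right\Vert _{L^{\infty}(B_{1})}$), and $\theta$ set to half the main term. The only cosmetic difference is that the paper keeps the factor $\left|B_{4\sqrt{N}\tau}\right|^{1/N}\sim\tau$ to get a $\tau^{2}$-sized error and an explicit formula for $\tau$, while you bound it crudely by a constant; also your smallness condition $\tau\,C(N)\left\Vert f\right\Vert _{L^{\infty}}\leq\theta/C$ should really just be $\leq\theta$, which is what the absorption needs (harmless, since the Harnack constant may be taken $\geq1$).
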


\begin{proof}
By the weak Harnack inequality (Lemma \ref{lem:weak harnack}) there
exist constants $C_{1}(\lambda,\varLambda,N)$ and $q(\lambda,\varLambda,N)$
such that for any $\tau<1/(4\sqrt{N}),$ we have
\begin{equation}
\inf_{B_{2\tau}}u\geq C_{1}\tau^{\frac{-N}{q}}\left(\int_{B_{\tau}}\left|u\right|^{q}\d x\right)^{1/q}-\tau\left(\int_{B_{4\sqrt{N}\tau}}\left|f\right|^{N}\d x\right)^{1/N}.\label{eq:improvement of oscillation 2}
\end{equation}
In particular, this holds for 
\[
\tau:=\min\left(\frac{1}{4\sqrt{N}},\sqrt{\frac{C_{1}\left|B_{1}\right|^{\frac{1}{q}-\frac{1}{N}}\mu^{\frac{1}{q}}l}{2\cdot4\sqrt{N}(\left\Vert f\right\Vert _{L^{\infty}(B_{1})}+1)}}\right).
\]
We continue the estimate (\ref{eq:improvement of oscillation 2})
using the assumption (\ref{eq:improvement of oscillation 1}) and
obtain
\begin{align*}
\inf_{B_{\tau}}u\geq\inf_{B_{2\tau}}u & \geq C_{1}\tau^{-\frac{N}{q}}\left(\left|\left\{ x\in B_{\tau}:u\geq l\right\} \right|l^{q}\right)^{1/q}-\tau\left(\int_{B_{4\sqrt{N}\tau}}\left|f\right|^{N}\d x\right)^{1/N}\\
 & \geq C_{1}\tau^{-\frac{N}{q}}\mu^{\frac{1}{q}}\left|B_{\tau}\right|^{\frac{1}{q}}l-\tau\left|B_{4\sqrt{N}\tau}\right|^{\frac{1}{N}}\left\Vert f\right\Vert _{L^{\infty}(B_{1})}\\
 & =C_{1}\left|B_{1}\right|^{\frac{1}{q}}\mu^{\frac{1}{q}}l\tau^{-\frac{N}{q}}\tau^{\frac{N}{q}}-4\sqrt{N}\left|B_{1}\right|^{\frac{1}{N}}\left\Vert f\right\Vert _{L^{\infty}(B_{1})}\tau^{2}\\
 & =C_{1}\left|B_{1}\right|^{\frac{1}{q}}\mu^{\frac{1}{q}}l-4\sqrt{N}\left|B_{1}\right|^{\frac{1}{N}}\left\Vert f\right\Vert _{L^{\infty}(B_{1})}\tau^{2}.\\
 & \geq\frac{1}{2}C_{1}\left|B_{1}\right|^{\frac{1}{q}}\mu^{\frac{1}{q}}l,\\
 & =:\theta,
\end{align*}
where the last inequality follows from the choice of $\tau$. 
\end{proof}
We are now ready to prove an improvement of oscillation for the gradient
of a solution to (\ref{eq:regularized homogeneous}). We first consider
the following lemma, where the improvement is considered towards a
fixed direction. We initially also restrict the range of $\left|q\right|$.

The idea is to differentiate the equation and observe that a suitable
function of $Du$ is a supersolution to the linear equation (\ref{eq:linear equation}).
Lemma \ref{lem:imposc linear} is then applied to obtain information
about $Du$.
\begin{lem}[Improvement of oscillation to direction]
\label{lem:imposc dir} Suppose that $p$ is smooth. Let $u$ be
a smooth solution to (\ref{eq:regularized homogeneous}) in $B_{1}$
with $\left|Du\right|\leq1$ and either $q=0$ or $\left|q\right|>2$.
Then for every $0<l<1$ and $\mu>0$ there exist positive constants
$\tau(N,\hat{p},l,\mu)<1$ and $\gamma(N,\hat{p},l,\mu)<1$ such that
\[
\left|\left\{ x\in B_{\tau}:Du\cdot d\leq l\right\} \right|>\mu\left|B_{\tau}\right|\quad\text{implies\ensuremath{\quad}}Du\cdot d\leq\gamma\text{ in }B_{\tau}
\]
whenever $d\in\partial B_{1}$.
\end{lem}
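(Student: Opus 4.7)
The plan is to apply the linear improvement of oscillation Lemma \ref{lem:imposc linear} to the function $v := 1 - Du\cdot d$, which is well-defined and nonnegative on $B_1$ since $|Du|\leq 1$ and $|d|=1$. The measure hypothesis translates into $|\{v\geq 1-l\}\cap B_\tau|>\mu|B_\tau|$, which is precisely the condition \eqref{eq:improvement of oscillation 1} with parameter $1-l\in (0,1)$; so once $v$ is shown to be a viscosity supersolution to an equation of the form \eqref{eq:linear equation} with uniformly elliptic $G$ (ellipticity constants depending only on $p_{\min}, p_{\max}$) and bounded source $\|f\|_{L^\infty(B_1)}\leq C(N,\hat{p})$, Lemma \ref{lem:imposc linear} will furnish $\tau,\theta>0$ depending only on $N,\hat{p},l,\mu$ such that $v\geq \theta$ on $B_\tau$. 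Setting $\gamma := 1-\theta\in(0,1)$ then finishes the proof.

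To establish the supersolution property, I would invoke Proposition \ref{prop:c infty} to get $u\in C^\infty(B_1)$, which legitimises classical differentiation. Writing \eqref{eq:regularized homogeneous} as $F(x, Du, D^2 u) = 0$ with
\[
F(x,s,M) := -\tr(M)-(p(x)-2)\,\frac{\langle M(s+q),s+q\rangle}{|s+q|^2+\varepsilon^2},
\]
and differentiating in the direction $d$, the function $w := Du\cdot d$ satisfies a linear equation whose principal part is $-\tr(A(x) D^2 w)$, where
\[
A(x) := I + (p(x)-2)\,\frac{(Du+q)\otimes(Du+q)}{|Du+q|^2+\varepsilon^2}
\]
has eigenvalues in $[\min(1, p_{\min}-1), \max(1, p_{\max}-1)]$. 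This yields uniform ellipticity independent of $\varepsilon$ and $q$, and $v = 1-w$ satisfies the same linear equation with flipped source.

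The hard part is controlling the source uniformly. It consists of an $F_x\cdot d$ term bounded via $p_L$ times a ratio of the form $\langle D^2u (Du+q), Du+q\rangle/(|Du+q|^2 + \varepsilon^2)$, and an $F_s\cdot Dw$ term involving second derivatives of $u$ through both $F_s$ and $Dw = D^2u\cdot d$. The dichotomy $q = 0$ or $|q|>2$ is essential here: when $|q|>2$, the bound $|Du|\leq 1$ forces $|Du+q|\geq 1$, so the denominators $|Du+q|^2 + \varepsilon^2$ are bounded below by $1$ and all rational factors remain uniformly bounded independently of $\varepsilon$ and $q$; the case $q=0$ is handled symmetrically, with the $\varepsilon$-regularization in the denominator playing the analogous role. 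Using $Dw = D^2 u\cdot d$, the part of $F_s\cdot Dw$ that is quadratic in $D^2 u$ can be absorbed into the principal part $-\tr(A D^2 w)$ by redefining $A$ while preserving its uniform ellipticity bounds, leaving behind a source with the desired $L^\infty$ estimate depending only on $N$ and $\hat{p}$. This absorption step is the technical core of the argument; the remainder is bookkeeping and the direct application of Lemma \ref{lem:imposc linear}.
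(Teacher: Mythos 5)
There are two genuine gaps, and both sit exactly at the step you yourself call the technical core. First, the case $q=0$ cannot be handled ``symmetrically'' by letting the $\varepsilon$-regularization bound the denominators: the constants $\tau,\gamma$ in the lemma must depend only on $N,\hat{p},l,\mu$ and in particular be independent of $\varepsilon$ (this is the whole point of Section \ref{sec:regularized homogeneous}), while the gradient derivatives $\partial_{\eta_m}A_{ij}$ are of size $(\left|Du+q\right|^{2}+\varepsilon^{2})^{-1/2}$, which is uniformly bounded when $\left|q\right|>2$ (then $\left|Du+q\right|\geq1$) but blows up as $\left|Du\right|\rightarrow0$ and $\varepsilon\rightarrow0$ when $q=0$. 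The paper avoids this not by using $\varepsilon$ but by replacing $w=Du\cdot d$ with $h=(Du\cdot d-l+\frac{l}{2}\left|Du\right|^{2})^{+}$: on the set where $h>0$ one automatically has $\left|Du\right|>l/2$, which yields an $\varepsilon$-independent lower bound for the denominators, at the price of constants of order $l^{-1}$, $l^{-3}$.

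Second, the absorption you propose does not work. Differentiating the equation in direction $d$ gives, in the paper's notation, $\mathcal{A}_{ij}w_{ij}+\mathcal{A}_{ij,\eta_{m}}u_{ij}w_{m}+\mathcal{A}_{ij,x_{k}}u_{ij}d_{k}=0$ for $w=Du\cdot d$. The middle term is a drift term whose coefficient $\mathcal{A}_{ij,\eta_{m}}u_{ij}$ contains $D^{2}u$, and the last term is linear in $D^{2}u$; neither is a linear function of $D^{2}w$, so neither can be absorbed into the principal part $-\tr(AD^{2}w)$ by redefining $A$, and neither is bounded by $C(N,\hat{p})$ since there is no a priori bound on $D^{2}u$. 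Lemma \ref{lem:imposc linear}, however, requires an equation of the form $-\tr(GD^{2}u)=f$ with $f\in L^{\infty}$ and no drift. The paper's remedy is to add to the differentiated equation the identity obtained by contracting with $2u_{k}$ (a Bochner-type computation), which produces the coercive term $-l\mathcal{A}_{ij}u_{kj}u_{ki}\leq-c\,l\left|u_{ij}\right|^{2}$; Young's inequality then absorbs every term involving $D^{2}u$, leaving $0\leq\mathcal{A}_{ij}h_{ij}+C(\left|Dh\right|^{2}+1)/l^{3}$, and the remaining $\left|Dh\right|^{2}$ is removed by the exponential change of variables $\overline{h}=\nu^{-1}(1-e^{\nu(h-H)})$ before Lemma \ref{lem:imposc linear} is applied. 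Your outline contains neither the coercive term nor the exponential transform, so the ``source'' you are left with is not in $L^{\infty}$ and the linear lemma cannot be invoked. The surrounding reductions (passing to a nonnegative function, translating the measure condition, converting the lower bound $\geq\theta$ into $Du\cdot d\leq\gamma$) are fine, but they are the easy part of the argument.
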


\begin{proof}
To simplify notation, we set
\begin{align*}
A_{ij}(x,\eta) & :=\delta_{ij}+(p(x)-2)\frac{(\eta_{i}+q_{i})(\eta_{j}+q_{j})}{\left|\eta+q\right|^{2}+\varepsilon^{2}}.
\end{align*}
We also denote the functions $\mathcal{A}_{ij}:x\mapsto A_{ij}(x,Du(x))$,
$\mathcal{A}_{ij,x_{k}}:x\mapsto(\partial_{x_{k}}A_{ij})(x,Du(x))$
and $\mathcal{A}_{ij,\eta_{k}}:x\mapsto(\partial_{\eta_{k}}A_{ij})(x,Du(x))$.
Then, since $u$ is a smooth solution to (\ref{eq:regularized homogeneous})
in $B_{1}$, we have in Einstein's summation convention
\[
-\mathcal{A}_{ij}u_{ij}=0\quad\text{pointwise in }B_{1}.
\]
Differentiating this yields
\begin{align}
0=(\mathcal{A}_{ij}u_{ij})_{k} & =\mathcal{A}_{ij}u_{ijk}+(\mathcal{A}_{ij})_{k}u_{ij}\nonumber \\
 & =\mathcal{A}_{ij}u_{ijk}+\mathcal{A}_{ij,\eta_{m}}u_{ij}u_{km}+\mathcal{A}_{ij,x_{k}}u_{ij}\quad\text{for all }k=1,\ldots N.\label{eq:imposc dir 2}
\end{align}
Multiplying these identities by $d_{k}$ and summing over $k$, we
obtain
\begin{align}
0 & =\mathcal{A}_{ij}u_{ijk}d_{k}+\mathcal{A}_{ij,\eta_{m}}u_{ij}u_{km}d_{k}+\mathcal{A}_{ij,x_{k}}u_{ij}d_{k}\nonumber \\
 & =\mathcal{A}_{ij}(Du\cdot d-l)_{ij}+\mathcal{A}_{ij,\eta_{m}}u_{ij}(Du\cdot d-l)_{m}+\mathcal{A}_{ij,x_{k}}u_{ij}d_{k}.\label{eq:imposc dir 3}
\end{align}
Moreover, multiplying (\ref{eq:imposc dir 2}) by $2u_{k}$ and summing
over $k$, we obtain
\begin{align}
0 & =2\mathcal{A}_{ij}u_{ijk}u_{k}+2\mathcal{A}_{ij,\eta_{m}}u_{ij}u_{km}u_{k}+2\mathcal{A}_{ij,x_{k}}u_{ij}u_{k}\nonumber \\
 & =\mathcal{A}_{ij}(2u_{ijk}u_{k}+2u_{kj}u_{ki})-2\mathcal{A}_{ij}u_{kj}u_{ki}+2\mathcal{A}_{ij,\eta_{m}}u_{ij}u_{km}u_{k}+2\mathcal{A}_{ij,x_{k}}u_{ij}u_{k}\nonumber \\
 & =\mathcal{A}_{ij}(u_{k}^{2})_{ij}-2\mathcal{A}_{ij}u_{kj}u_{ki}+\mathcal{A}_{ij,\eta_{m}}u_{ij}(u_{k}^{2})_{m}+2\mathcal{A}_{ij,x_{k}}u_{ij}u_{k}\nonumber \\
 & =\mathcal{A}_{ij}(\left|Du\right|^{2})_{ij}+\mathcal{A}_{ij,\eta_{m}}u_{ij}(\left|Du\right|^{2})_{m}+2\mathcal{A}_{ij,x_{k}}u_{ij}u_{k}-2\mathcal{A}_{ij}u_{kj}u_{ki}.\label{eq:imposc dir 4}
\end{align}
We will now split the proof into the cases $q=0$ or $\left|q\right|>2$,
and proceed in two steps: First we check that a suitable function
of $Du$ is a supersolution to the linear equation (\ref{lem:imposc linear})
and then apply Lemma \ref{lem:imposc linear} to obtain the claim.

\textbf{Case $q=0$, Step 1: }We denote $\Omega_{+}:=\left\{ x\in B_{1}:h(x)>0\right\} $,
where
\[
h:=(Du\cdot d-l+\frac{l}{2}\left|Du\right|^{2})^{+}.
\]
If $\left|Du\right|\leq l/2$, we have
\[
Du\cdot d-l+\frac{l}{2}\left|Du\right|^{2}\leq-\frac{l}{2}+\frac{l^{3}}{8}<0.
\]
This implies that $\left|Du\right|>l/2$ in $\Omega_{+}$. Therefore,
since $q=0$, we have in $\Omega_{+}$
\begin{align}
\left|\mathcal{A}_{ij,\eta_{m}}\right| & =\left|p(x)-2\right|\left|\frac{\delta_{im}(u_{j}+q_{j})+\delta_{jm}(u_{i}+q_{i})}{\left|Du+q\right|^{2}+\varepsilon^{2}}-\frac{2(u_{m}+q_{m})(u_{i}+q_{i})(u_{j}+q_{j})}{(\left|Du+q\right|^{2}+\varepsilon^{2})^{2}}\right|\nonumber \\
 & \leq8l^{-1}\left\Vert p-2\right\Vert _{L^{\infty}(B_{1})},\label{eq:imposc dir 5}\\
\left|\mathcal{A}_{ij,x_{k}}\right| & =\left|Dp(x)\right|\left|\frac{(\eta_{i}+q_{i})(\eta_{j}+q_{j})}{\left|\eta+q\right|^{2}+\varepsilon^{2}}\right|\leq p_{L}.\label{eq:imposc dir 6}
\end{align}
Summing up the equations (\ref{eq:imposc dir 3}) and (\ref{eq:imposc dir 4})
multiplied by $2^{-1}l$, we obtain in $\Omega_{+}$
\begin{align*}
0=\  & \mathcal{A}_{ij}(Du\cdot d-l)_{ij}+\mathcal{A}_{ij,\eta_{m}}u_{ij}(Du\cdot d-l)_{m}+\mathcal{A}_{ij,x_{k}}u_{ij}d_{k}\\
 & +2^{-1}l\big(\mathcal{A}_{ij}(\left|Du\right|^{2})_{ij}+\mathcal{A}_{ij,\eta_{m}}u_{ij}(\left|Du\right|^{2})_{m}+2\mathcal{A}_{ij,x_{k}}u_{ij}u_{k}-2\mathcal{A}_{ij}u_{kj}u_{ki}\big)\\
=\  & \mathcal{A}_{ij}h_{ij}+\mathcal{A}_{ij,\eta_{m}}u_{ij}h_{m}+\mathcal{A}_{ij,x_{k}}u_{ij}d_{k}+l\mathcal{A}_{ij,x_{k}}u_{ij}u_{k}-l\mathcal{A}_{ij}u_{kj}u_{ki}\\
\leq\  & \mathcal{A}_{ij}h_{ij}+\left|\mathcal{A}_{ij,\eta_{m}}u_{ij}\right|\left|h_{m}\right|+\left|\mathcal{A}_{ij,x_{k}}u_{ij}\right|\left|d_{k}+lu_{k}\right|-l\mathcal{A}_{ij}u_{kj}u_{ki}.
\end{align*}
Since $\left|Du\right|\leq1$, we have $\left|d_{k}+lu_{k}\right|^{2}\leq4$
and by uniform ellipticity $\mathcal{A}_{ij}u_{kj}u_{ki}\geq\min(p_{\min}-1,1)\left|u_{ij}\right|^{2}$.
Therefore, by applying Young's inequality with $\epsilon>0$, we obtain
from the above display
\begin{align*}
0 & \leq\mathcal{A}_{ij}h_{ij}+N^{2}\epsilon^{-1}(\left|h_{m}\right|^{2}+\left|d_{k}+lu_{k}\right|^{2})+\epsilon(\left|\mathcal{A}_{ij,\eta_{m}}\right|^{2}+\left|\mathcal{A}_{ij,x_{k}}\right|^{2})\left|u_{ij}\right|^{2}-l\mathcal{A}_{ij}u_{kj}u_{ki}\\
 & \leq\mathcal{A}_{ij}h_{ij}+N^{2}\epsilon^{-1}(\left|Dh\right|^{2}+4)+\epsilon C(N,\hat{p})(l^{-2}+1)\left|u_{ij}\right|^{2}-l\min(\pmin-1,1)\left|u_{ij}\right|^{2},
\end{align*}
where in the second estimate we used (\ref{eq:imposc dir 5}) and
(\ref{eq:imposc dir 6}). By taking $\epsilon$ small enough, we obtain
\begin{equation}
0\leq\mathcal{A}_{ij}h_{ij}+C_{0}(N,\hat{p})\frac{\left|Dh\right|^{2}+1}{l^{3}}\quad\text{in }\Omega_{+},\label{eq:imposc dir 7}
\end{equation}
Next we define 
\begin{equation}
\overline{h}:=\frac{1}{\nu}(1-e^{\nu(h-H)}),\quad\text{where}\quad H:=1-\frac{l}{2}\quad\text{and}\quad\nu:=\frac{C_{0}}{l^{3}\min(\pmin-1,1)}.\label{eq:imposc dir 10}
\end{equation}
Then by (\ref{eq:imposc dir 7}) and uniform ellipticity we have in
$\Omega_{+}$
\begin{align*}
-\mathcal{A}_{ij}\overline{h}{}_{ij} & =\mathcal{A}_{ij}(h_{ij}e^{\nu(h-H)}+\nu h_{i}h_{j}e^{\nu(h-H)})\\
 & \geq e^{\nu(h-H)}(-C_{0}\frac{\left|Dh\right|^{2}}{l^{3}}-\frac{C_{0}}{l^{3}}+\nu\min(\pmin-1,1)\left|Dh\right|^{2})\\
 & \geq-\frac{C_{0}}{l^{3}}.
\end{align*}
Since the minimum of two viscosity supersolutions is still a viscosity
supersolution, it follows from the above estimate that $\overline{h}$
is a non-negative viscosity supersolution to
\begin{equation}
-\mathcal{A}_{ij}\overline{h}_{ij}\ge\frac{-C_{0}}{l^{3}}\quad\text{in }B_{1}.\label{eq:imposc dir 8}
\end{equation}

\textbf{Case $q=0$, Step 2: }We set $l_{0}:=\frac{1}{\nu}(1-e^{\nu(l-1)})$.
Then, since $\overline{h}$ solves (\ref{eq:imposc dir 8}), by Lemma
\ref{lem:imposc linear} there are positive constants $\tau(N,p,l,\mu)$
and $\theta(N,p,l,\mu)$ such that
\[
\left|\left\{ x\in B_{\tau}:\overline{h}\geq l_{0}\right\} \right|>\mu\left|B_{\tau}\right|\quad\text{implies}\quad\overline{h}\geq\theta\quad\text{in }B_{\tau}.
\]
If $Du\cdot d\leq l$, we have $\overline{h}\geq l_{0}$ and therefore
\[
\left|\left\{ x\in B_{\tau}:\overline{h}\geq l_{0}\right\} \right|\geq\left|\left\{ x\in B_{\tau}:Du\cdot d\leq l\right\} \right|>\mu\left|B_{\tau}\right|,
\]
where the last inequality follows from the assumptions. Consequently,
we obtain
\[
\overline{h}\geq\theta\quad\text{in }B_{\tau}.
\]
Since $h-H\leq0$, by convexity we have $H-h\geq\overline{h}$. This
together with the above estimate yields
\[
1-2^{-1}l-(Du\cdot d-l+2^{-1}l\left|Du\right|^{2})\geq\theta\quad\text{in }B_{\tau}
\]
and so
\[
Du\cdot d+2^{-1}l(Du\cdot d)^{2}\leq Du\cdot d+2^{-1}l\left|Du\right|^{2}\leq1+2^{-1}l-\theta\quad\text{in }B_{\tau}.
\]
Using the quadratic formula, we thus obtain the desired estimate
\[
Du\cdot d\leq\frac{-1+\sqrt{1+2l(1+2^{-1}l-\theta)}}{l}=\frac{-1+\sqrt{(1+l)^{2}-2l\theta}}{l}=:\gamma<1\quad\text{in }B_{\tau}.
\]

\textbf{Case $\left|q\right|>2$: }Computing like in (\ref{eq:imposc dir 5})
and (\ref{eq:imposc dir 6}), we obtain this time in $B_{1}$
\[
\left|\mathcal{A}_{ij,\eta_{m}}\right|\leq4\left\Vert p-2\right\Vert _{L^{\infty}(B_{1})}\quad\text{and}\quad\left|\mathcal{A}_{ij,x_{k}}\right|\leq p_{L}
\]
Moreover, this time we set simply
\[
h:=Du\cdot d-l+2^{-1}l\left|Du\right|^{2}.
\]
Summing up the identities (\ref{eq:imposc dir 3}) and (\ref{eq:imposc dir 4})
and using Young's inequality similarly as in the case $\left|q\right|=0$,
we obtain in $B_{1}$
\begin{align*}
0 & \leq\mathcal{A}_{ij}h_{ij}+N^{2}\epsilon^{-1}(\left|h_{m}\right|^{2}+\left|d_{k}+lu_{k}\right|^{2})+\epsilon(\left|\mathcal{A}_{ij,\eta_{m}}\right|^{2}+\left|\mathcal{A}_{ij,x_{k}}\right|^{2})\left|u_{ij}\right|^{2}-l\mathcal{A}_{ij}u_{kj}u_{ki}\\
 & \leq\mathcal{A}_{ij}h_{ij}+N^{2}\epsilon^{-1}(\left|Dh\right|^{2}+4)+\epsilon C(\hat{p})\left|u_{ij}\right|^{2}-lC(\hat{p})\left|u_{ij}\right|^{2}.
\end{align*}
By taking small enough $\epsilon$, we obtain
\[
0\leq\mathcal{A}_{ij}h_{ij}+C_{0}(N,\hat{p})\frac{\left|Dh\right|^{2}+1}{l}\quad\text{in }B_{1}.
\]
Next we define $\overline{h}$ and $H$ like in (\ref{eq:imposc dir 10}),
but set instead $\nu:=C_{0}/(l\min(p_{\min}-1,1))$. The rest of the
proof then proceeds in the same way as in the case $q=0$.\begin{details}
\[
\overline{h}:=\frac{1}{\nu}(1-e^{\nu(h-H)}),\quad\text{where}\quad H:=1-\frac{l}{2}\quad\text{and}\quad\nu:=\frac{C_{0}}{l\min(\pmin-1,1)}.
\]
Then we have in $B_{1}$
\begin{align}
-A_{ij}\overline{h}_{ij} & =A_{ij}(h_{ij}e^{\nu(h-H)}+\nu h_{i}h_{j}e^{\nu(h-H)})\nonumber \\
 & \geq e^{\nu(h-H)}(-C_{0}\frac{\left|Dh\right|^{2}}{l}-C_{0}+\nu\min(\pmin-1,1)\left|Dh\right|^{2})\nonumber \\
 & \geq-C_{0}.\label{eq:imposc dir 9}
\end{align}
\textbf{(Case $\left|q\right|>2$, Step 2)} We set $l_{0}:=\frac{1}{\nu}(1-e^{\nu(l-1)})$.
Then, since $\overline{h}$ solves (\ref{eq:imposc dir 9}), by Lemma
(\ref{eq:imposc dir 9}) there are positive constants $\tau(p,N,l,\mu)$
and $\theta(p,N,l,\mu)$ such that 
\[
\left|\left\{ x\in B_{\tau}:\overline{h}\geq l_{0}\right\} \right|>\mu\left|B_{\tau}\right|\quad\text{implies\ensuremath{\quad}}\overline{h}\ge\theta\quad\text{in }B_{\tau}.
\]
If $Du\cdot d\leq l$, we have $\overline{h}\geq l_{0}$ and therefore
\[
\left|\left\{ x\in B_{\tau}:h\geq l_{0}\right\} \right|\geq\left|\left\{ x\in B_{\tau}:Du\cdot d\leq l\right\} \right|>\mu\left|B_{\tau}\right|,
\]
where the last estimate follows from the assumptions. Consequently
we obtain
\[
\overline{h}\geq\theta\quad\text{in }B_{\tau}.
\]
By convexity we have $H-h\geq\overline{h}.$ This together with the
above estimate yields
\[
1-2^{-1}l-(Du\cdot d-l+2^{-1}l\left|Du\right|^{2})\geq\theta\quad\text{in }B_{\tau}
\]
and so 
\[
Du\cdot d+2^{-1}l(Du\cdot d)^{2}\leq Du\cdot d+2^{-1}l\left|Du\right|^{2}\leq1+2^{-1}l-\theta\quad\text{in }B_{\tau}.
\]
\end{details}
\end{proof}
Next we inductively apply the previous lemma to prove the improvement
of oscillation.
\begin{thm}[Improvement of oscillation]
\label{thm:imposc} Suppose that $p$ is smooth. Let $u$ be a smooth
solution to (\ref{eq:regularized homogeneous}) in $B_{1}$ with $\left|Du\right|\leq1$
and either $q=0$ or $\left|q\right|>2$. Then for every $0<l<1$
and $\mu>0$ there exist positive constants $\tau(N,\hat{p},l,\mu)<1$
and $\gamma(N,\hat{p},l,\mu)<1$ such that if
\begin{equation}
\left|\left\{ x\in B_{\tau^{i+1}}:Du\cdot d\leq l\gamma^{i}\right\} \right|>\mu\left|B_{\tau^{i+1}}\right|\quad\text{for all }d\in\partial B_{1},\ i=0,\ldots,k,\label{eq:imposc cnd}
\end{equation}
then
\begin{equation}
\left|Du\right|\leq\gamma^{i+1}\quad\text{in }B_{\tau^{i+1}}\quad\text{for all }i=0,\ldots,k.\label{eq:imposc cnd2}
\end{equation}
\end{thm}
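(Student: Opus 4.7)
The plan is to prove the two statements (\ref{eq:imposc cnd2}) for $i=0,1,\ldots,k$ by induction on $i$, using Lemma \ref{lem:imposc dir} at each step after an appropriate rescaling of $u$. Observe first that the conclusion $|Du|\le\gamma^{i+1}$ follows from the directional bound $Du\cdot d\le\gamma^{i+1}$ for every $d\in\partial B_1$, since choosing $d=Du/|Du|$ at points where $Du\neq 0$ gives $|Du|\le\gamma^{i+1}$.

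For the base case $i=0$, the hypothesis (\ref{eq:imposc cnd}) with $i=0$ is exactly the hypothesis of Lemma \ref{lem:imposc dir} (with the given $l,\mu$), so Lemma \ref{lem:imposc dir} yields $Du\cdot d\le\gamma$ in $B_{\tau}$ for every $d\in\partial B_1$, and hence $|Du|\le\gamma$ in $B_\tau$. For the inductive step, assume $|Du|\le\gamma^{i+1}$ in $B_{\tau^{i+1}}$ and that (\ref{eq:imposc cnd}) holds at level $i+1$. Define the rescaled function
\[
v(x):=\frac{u(\tau^{i+1}x)}{\tau^{i+1}\gamma^{i+1}},\qquad x\in B_{1},
\]
so that $Dv(x)=Du(\tau^{i+1}x)/\gamma^{i+1}$. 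By the inductive hypothesis $|Dv|\le 1$ in $B_1$. A direct substitution, writing $\tilde p(x):=p(\tau^{i+1}x)$, $\tilde q:=q/\gamma^{i+1}$ and $\tilde\varepsilon:=\varepsilon/\gamma^{i+1}$, shows that $v$ is a smooth solution of
\[
-\Delta v-(\tilde p(x)-2)\frac{\left\langle D^{2}v(Dv+\tilde q),Dv+\tilde q\right\rangle }{\left|Dv+\tilde q\right|^{2}+\tilde\varepsilon^{2}}=0\quad\text{in }B_{1},
\]
because the factors $\gamma^{2(i+1)}$ and $\tau^{i+1}/\gamma^{i+1}$ cancel between the two sides of the equation.

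Crucially, the rescaled data stay in the admissible range of Lemma \ref{lem:imposc dir}: $\tilde p$ has the same bounds $p_{\min},p_{\max}$ and a Lipschitz constant at most $\tau^{i+1}p_{L}\le p_{L}$, while $\tilde q=0$ if $q=0$, and $|\tilde q|=|q|/\gamma^{i+1}>2$ if $|q|>2$ (since $\gamma<1$). Therefore the constants $\tau,\gamma$ produced by Lemma \ref{lem:imposc dir} for the given $l,\mu$ are independent of $i$, which is exactly what allows the iteration to close.

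Finally, the change of variables $y=\tau^{i+1}x$ translates the hypothesis (\ref{eq:imposc cnd}) at level $i+1$ into
\[
\left|\{x\in B_{\tau}:Dv(x)\cdot d\le l\}\right|>\mu|B_{\tau}|\quad\text{for all }d\in\partial B_{1},
\]
since $Du(y)\cdot d\le l\gamma^{i+1}$ is equivalent to $Dv(x)\cdot d\le l$. Applying Lemma \ref{lem:imposc dir} to $v$ gives $Dv\cdot d\le\gamma$ in $B_{\tau}$ for every $d$, i.e.\ $|Dv|\le\gamma$ in $B_{\tau}$. Unwinding the rescaling yields $|Du|\le\gamma^{i+2}$ in $B_{\tau^{i+2}}$, completing the induction. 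The only real subtlety is the bookkeeping that ensures the hypotheses of Lemma \ref{lem:imposc dir} are preserved at every scale — in particular that the condition $|q|>2$ is stable under the rescaling (it is, because $\gamma<1$), and that the Lipschitz constant of $p$ does not blow up (it does not, because $\tau^{i+1}\le 1$).
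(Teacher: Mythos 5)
Your proposal is correct and follows essentially the same route as the paper: induction on $i$, with the inductive step performed by rescaling $u$ to a solution $v$ on $B_1$ with $|Dv|\leq 1$, checking that the rescaled equation still satisfies the hypotheses of Lemma \ref{lem:imposc dir} (with $q$ replaced by $q/\gamma^{i+1}$, still $0$ or of norm $>2$ since $\gamma<1$), and transferring the measure condition to $v$ before applying the lemma. The only difference from the paper is a harmless index shift in the rescaling, and your extra remarks (directional bound implies the norm bound; the Lipschitz constant of the rescaled $p$ does not increase) are correct and consistent with what the paper leaves implicit.
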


\begin{proof}
Let $k\geq0$ be an integer and suppose that (\ref{eq:imposc cnd})
holds. We proceed by induction.\textbf{ }

\textbf{Initial step:} Since (\ref{eq:imposc cnd}) holds for $i=0$,
by Lemma \ref{lem:imposc dir} we have $Du\cdot d\leq\gamma$ in $B_{\tau}$
for all $d\in\partial B_{1}$. This implies (\ref{eq:imposc cnd2})
for $i=0$.

\textbf{Induction step:} Suppose that $0<i\leq k$ and that (\ref{eq:imposc cnd2})
holds for $i-1$. We define
\[
v(x):=\tau^{-i}\gamma^{-i}u(\tau^{i}x).
\]
Then $v$ solves
\[
-\Delta v-(p(\tau^{i}x)-2)\frac{\left\langle D^{2}v(Dv+\gamma^{-i}q),Dv+\gamma^{i}q\right\rangle }{\left|Dv+\gamma^{-i}q\right|^{2}+(\gamma^{-i}\varepsilon)^{2}}=0\quad\text{in }B_{1}.
\]
Moreover, by induction hypothesis $\left|Dv(x)\right|=\gamma^{-i}\left|Du(\tau^{i}x)\right|\leq\gamma^{-i}\gamma^{i}=1$
in $B_{1}$. Therefore by Lemma \ref{lem:imposc dir} we have that
\begin{equation}
\left|\left\{ x\in B_{\tau}:Dv\cdot d\leq l\right\} \right|>\mu\left|B_{\tau}\right|\quad\text{implies}\quad Dv\cdot d\leq\gamma\text{ in }B_{\tau}\label{eq:imposc 1}
\end{equation}
whenever $d\in\partial B_{1}$. Since
\[
\left|\left\{ x\in B_{\tau}:Dv\cdot d\leq l\right\} \right|>\mu\left|B_{\tau}\right|\iff\left|\left\{ x\in B_{\tau^{i+1}}:Du\cdot d\leq l\gamma^{i}\right\} \right|>\mu\left|B_{\tau^{i+1}}\right|,
\]
we have by (\ref{eq:imposc cnd}) and (\ref{eq:imposc 1}) that $Dv\cdot d\leq\gamma$
in $B_{\tau}$. This implies that $Du\cdot d\leq\gamma^{i+1}$ in
$B_{\tau^{i+1}}$. Since $d\in\partial B_{1}$ was arbitrary, we obtain
(\ref{eq:imposc cnd2}) for $i$.
\end{proof}

\subsection{Hölder gradient estimates}

In this section we apply the improvement of oscillation to prove $C^{1,\alpha}$-estimates
for solutions to (\ref{eq:regularized homogeneous}). We need the
following regularity result by Savin \cite{savin07}.
\begin{lem}
\label{lem:small perturbation}Suppose that $p$ is smooth. Let $u$
be a smooth solution to (\ref{eq:regularized homogeneous}) in $B_{1}$
with $\left|Du\right|\leq1$ and either $q=0$ or $\left|q\right|>2$.
Then for any $\beta>0$ there exist positive constants $\eta(N,\hat{p},\beta)$
and $C(N,\hat{p},\beta)$ such that if 
\[
\left|u-L\right|\leq\eta\quad\text{in }B_{1}
\]
for some affine function $L$ satisfying $1/2\leq\left|DL\right|\leq1$,
then we have
\[
\left|Du(x)-Du(0)\right|\leq C\left|x\right|^{\beta}\quad\text{for all }x\in B_{1/2}.
\]
\end{lem}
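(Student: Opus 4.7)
The plan is to recognize this as a small-perturbation-of-a-classical-solution scenario and to invoke Savin's theorem \cite{savin07}, taking the affine function $L$ as the reference solution. I would write equation (\ref{eq:regularized homogeneous}) in fully nonlinear form $F(x,Du,D^{2}u)=0$ with
\[
F(x,\eta,M):=-\tr\!\left[\left(\delta_{ij}+(p(x)-2)\frac{(\eta+q)_{i}(\eta+q)_{j}}{|\eta+q|^{2}+\varepsilon^{2}}\right)M\right].
\]
Since $D^{2}L\equiv 0$, the function $L$ is a classical solution of the equation, so $u$ can be viewed as an $L^{\infty}$-small perturbation of this trivial solution.

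The next step is to check that $F$ is smooth and uniformly elliptic at the phase-space points $(x,DL,0)$. Uniform ellipticity with constants $\min(p_{\min}-1,1)$ and $\max(p_{\max}-1,1)$ is immediate from the form of the coefficient matrix. Smoothness of $F$ in a neighborhood of $(x,DL,0)$ requires the denominator $|DL+q|^{2}+\varepsilon^{2}$ to be bounded away from zero uniformly, and this is where the dichotomy $q=0$ or $|q|>2$ combined with $1/2\leq|DL|\leq 1$ is used: in the first case $|DL+q|=|DL|\geq 1/2$, and in the second case $|DL+q|\geq|q|-|DL|\geq 2-1=1$. Thus in a uniform neighborhood of $(x,DL,0)$ all derivatives of $F$ in the three slots are bounded by constants depending only on $N$ and $\hat{p}$, and in particular independently of $q$ and $\varepsilon$.

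With these ingredients in place, Savin's small perturbation theorem applies and yields, for every $\beta\in(0,1)$, constants $\eta(N,\hat{p},\beta)$ and $C(N,\hat{p},\beta)$ such that whenever $\|u-L\|_{L^{\infty}(B_{1})}\leq\eta$, the solution $u$ satisfies the pointwise $C^{1,\beta}$ estimate at the origin
\[
|Du(x)-Du(0)|\leq C|x|^{\beta}\quad\text{for all }x\in B_{1/2}.
\]

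The principal delicacy is not the existence of such an estimate but the claim that $\eta$ and $C$ depend only on $N,\hat{p},\beta$, uniformly in $q$ and $\varepsilon$. I would address this by verifying that the uniform ellipticity constants of $F$ and all the relevant seminorms of $F$ (together with its derivatives) restricted to a uniform neighborhood of $\{(x,DL,M):x\in B_{1},\ 1/2\leq|DL|\leq 1,\ |M|\leq 1\}$ are controlled solely by $N$, $p_{\min}$, $p_{\max}$, and $p_{L}$, thanks to the lower bound $|DL+q|\geq 1/2$ established above and the Lipschitz bound on $p$.
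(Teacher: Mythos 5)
Your proposal is correct and is essentially the paper's argument: the paper simply performs the change of variables $v:=u-L$ so that the reference solution becomes $0$ (as in the literal statement of Savin's theorem), and then uses exactly your observation that $q=0$ or $\left|q\right|>2$ together with $1/2\leq\left|DL\right|\leq1$ keeps $\left|Dv+q+DL\right|$ bounded below near the reference gradient, uniformly in $q$ and $\varepsilon$, before invoking \cite[Theorem 1.3]{savin07}. The resulting $C^{2,\beta}$ bound on $v$ then gives the pointwise estimate, with constants depending only on $N$, $\hat{p}$, $\beta$, just as you describe.
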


\begin{proof}
Set $v:=u-L$. Then $v$ solves
\begin{equation}
-\Delta u-\frac{(p(x)-2)\left\langle D^{2}u(Du+q+DL),Du+q+DL\right\rangle }{\left|Du+q+DL\right|^{2}+\varepsilon^{2}}=0\quad\text{in }B_{1}.\label{eq:small perturbation}
\end{equation}
Observe that by the assumption $1/2\leq\left|DL\right|\leq1$ we have
$\left|Du+q+DL\right|\geq1/4$ if $\left|Du\right|\leq1/4$. It therefore
follows from \cite[Theorem 1.3]{savin07} (see also \cite{wang13})
that $\left\Vert v\right\Vert _{C^{2,\beta}(B_{1/2})}\leq C$ which
implies the claim.
\end{proof}
We also use the following simple consequence of Morrey's inequality.
\begin{lem}
\label{lem:morrey lemma}Let $u:B_{1}\rightarrow\mathbb{R}$ be a
smooth function with $\left|Du\right|\leq1$. For any $\theta>0$
there are constants $\varepsilon_{1}(N,\theta),\varepsilon_{0}(N,\theta)<1$
such that if the condition
\[
\left|\left\{ x\in B_{1}:\left|Du-d\right|>\varepsilon_{0}\right\} \right|\leq\varepsilon_{1}
\]
is satisfied for some $d\in S^{N-1}$, then there is $a\in\mathbb{R}$
such that
\[
\left|u(x)-a-d\cdot x\right|\leq\theta\text{ for all }x\in B_{1/2}.
\]
\end{lem}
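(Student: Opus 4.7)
The approach is to reduce the problem to a single Sobolev/Morrey-type estimate applied to the auxiliary function $w(x) := u(x) - d \cdot x$, whose gradient $Dw = Du - d$ is small in $L^p$ for $p > N$ by the assumption.

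First I would fix an exponent $p > N$ (for instance $p = 2N$) and observe that since $|Du| \leq 1$ and $|d| = 1$ we have $|Dw| \leq 2$ everywhere, while $|Dw| \leq \varepsilon_0$ on the complement of the bad set. Splitting the integral accordingly,
\[
\int_{B_1} |Dw|^p \, dx \;\leq\; \varepsilon_0^p \, |B_1| \;+\; 2^p \, \varepsilon_1.
\]
Then, letting $a := (w)_{B_1}$ denote the average of $w$ over $B_1$, the Poincaré inequality combined with Morrey's embedding $W^{1,p}(B_1) \hookrightarrow C^{0,1-N/p}(\overline{B_1})$ for $p > N$ yields a constant $C_0 = C_0(N,p)$ with
\[
\|w - a\|_{L^\infty(B_1)} \;\leq\; C_0 \, \|Dw\|_{L^p(B_1)}.
\]

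Combining the two displays gives $|u(x) - a - d \cdot x| \leq C_0 \bigl( \varepsilon_0^p |B_1| + 2^p \varepsilon_1 \bigr)^{1/p}$ for every $x \in B_1$, hence in particular in $B_{1/2}$. It therefore suffices to choose $\varepsilon_0 = \varepsilon_0(N,\theta) < 1$ so small that $C_0 \varepsilon_0 |B_1|^{1/p} \leq \theta/2$, and then $\varepsilon_1 = \varepsilon_1(N,\theta) < 1$ so small that $2 C_0 \varepsilon_1^{1/p} \leq \theta/2$, which gives the stated bound with $a$ playing the role of the additive constant.

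There is no genuine obstacle here; the only thing to be careful about is picking $p$ strictly greater than $N$ so that Morrey applies, and checking that both parameters can be taken below $1$ (which is automatic for $\theta$ small and harmless for $\theta$ large since the condition becomes vacuous). The smoothness assumption on $u$ is used only to make all the integrals and pointwise values meaningful; the estimate depends solely on $\|Dw\|_{L^p}$.
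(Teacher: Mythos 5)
Your proof is correct and follows essentially the same route as the paper: apply a Morrey-type estimate with exponent $2N>N$ to $w=u-d\cdot x$ and split $\int_{B_1}|Dw|^{2N}$ over the small bad set (where $|Dw|\leq 2$) and its complement (where $|Dw|\leq\varepsilon_0$). The only cosmetic difference is that you take $a$ to be the average of $w$ and invoke Poincar\'e plus the Morrey embedding, while the paper bounds $\osc_{B_{1/2}}w$ directly by Morrey's inequality and takes $a=\inf_{B_{1/2}}w$.
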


\begin{proof}
By Morrey's inequality (see for example\ \cite[Theorem 4.10]{measuretheoryevans})
\begin{align*}
\underset{x\in B_{1/2}}{\osc}(u(x)-d\cdot x) & =\sup_{x,y\in B_{1/2}}\left|u(x)-d\cdot x-u(y)+d\cdot y\right|\\
 & \leq C(N)\Big(\int_{B_{1}}\left|Du-d\right|^{2N}\d x\Big)^{\frac{1}{2N}}\\
 & \leq C(N)(\varepsilon_{1}^{\frac{1}{2N}}+\varepsilon_{0}).
\end{align*}
Therefore, denoting $a:=\inf_{x\in B_{1/2}}(u(x)-d\cdot x)$, we have
for any $x\in B_{1/2}$
\[
\left|u(x)-a-d\cdot x\right|\leq\osc_{B_{1/2}}(u(x)-d\cdot x)\leq C(N)(\varepsilon_{1}^{\frac{1}{2N}}+\varepsilon_{0})\leq\theta,
\]
where the last inequality follows by taking small enough $\varepsilon_{0}$
and $\varepsilon_{1}$.
\end{proof}
We are now ready to prove a Hölder estimate for the gradient of solutions
to (\ref{eq:regularized homogeneous}). We first restrict the range
of $\left|q\right|$.
\begin{lem}
\label{thm:regularized apriori} Suppose that $p$ is smooth. Let
$u$ be a smooth solution to (\ref{eq:regularized homogeneous}) in
$B_{1}$ with $\left|Du\right|\leq1$ and either $q=0$ or $\left|q\right|>2$.
Then there exists a constant $\alpha(N,\hat{p})\in(0,1)$ such that
\[
\left\Vert Du\right\Vert _{C^{\alpha}(B_{1/2})}\leq C(N,\hat{p}).
\]
\end{lem}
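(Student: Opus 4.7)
The plan is a scale-by-scale dichotomy at the origin. The estimate at any $x_0\in B_{1/2}$ follows by translation and isotropic dilation into $B_1$, operations which preserve the form of (\ref{eq:regularized homogeneous}), the bound $|Du|\leq 1$, the Lipschitz control on $p$, and the dichotomy $q=0$ or $|q|>2$. To fix parameters, I first pick any $\beta\in(0,1)$ and obtain $\eta=\eta(N,\hat p,\beta)$ from Lemma \ref{lem:small perturbation}; then $\varepsilon_0,\varepsilon_1$ from Lemma \ref{lem:morrey lemma} applied with $\theta:=\eta/4$; then $l:=1-\varepsilon_0^2/2$ and $\mu:=\varepsilon_1/|B_1|$; then $\tau,\gamma\in(0,1)$ from Theorem \ref{thm:imposc} for these $l,\mu$; and finally $\alpha\in(0,\min(\beta,1))$ by adjusting $\gamma$ upward if necessary so that $\gamma=\tau^\alpha$.

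\emph{Case A: (\ref{eq:imposc cnd}) holds for every $k\geq 0$.} By Theorem \ref{thm:imposc}, $|Du|\leq\gamma^{k+1}$ on $B_{\tau^{k+1}}$ for all $k$; hence $Du(0)=0$, and for $\tau^{k+2}<|x|\leq\tau^{k+1}$ one has $|Du(x)|\leq\gamma^{k+1}=\gamma^{-1}(\tau^{k+2})^\alpha\leq\gamma^{-1}|x|^\alpha$, so $|Du(x)-Du(0)|\leq C|x|^\alpha$ on $B_{1/2}$.

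\emph{Case B: let $k\geq 0$ be the smallest step at which (\ref{eq:imposc cnd}) fails.} Theorem \ref{thm:imposc} still yields $|Du|\leq\gamma^{k}$ on $B_{\tau^{k}}$ (vacuous if $k=0$), so the rescaled function $v(x):=\gamma^{-k}\tau^{-k}u(\tau^k x)$ solves (\ref{eq:regularized homogeneous}) on $B_1$ with $\tilde p(x):=p(\tau^k x)$, $\tilde q:=\gamma^{-k}q$ (still $\tilde q=0$ or $|\tilde q|>2$, since $\gamma^{-k}\geq 1$), $\tilde\varepsilon:=\gamma^{-k}\varepsilon$, satisfies $\|D\tilde p\|_{L^\infty}\leq p_L$, and obeys $|Dv|\leq 1$. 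Failure of (\ref{eq:imposc cnd}) at step $k$ provides $d_0\in\partial B_1$ with $|\{x\in B_\tau:Dv\cdot d_0\leq l\}|\leq\mu|B_\tau|$, and on the complement the bound $|Dv|\leq 1$ forces $|Dv-d_0|^2\leq 2(1-l)=\varepsilon_0^2$. Passing to $w(x):=v(\tau x)/\tau$ on $B_1$ (equation of the same form, $|Dw|\leq 1$) and changing variables gives $|\{x\in B_1:|Dw-d_0|>\varepsilon_0\}|\leq\varepsilon_1$. Lemma \ref{lem:morrey lemma} then produces $a\in\mathbb{R}$ with $|w-a-d_0\cdot x|\leq\theta$ on $B_{1/2}$; a final rescaling $z(x):=2w(x/2)$ on $B_1$ (same form, $|Dz|\leq 1$) satisfies $|z-L|\leq 2\theta=\eta/2<\eta$ for $L(x):=2a+d_0\cdot x$ with $|DL|=1$, so Lemma \ref{lem:small perturbation} yields $|Dz(x)-Dz(0)|\leq C|x|^\beta$ on $B_{1/2}$.

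Unwinding $Dz(x)=\gamma^{-k}Du(\tau^{k+1}x/2)$ and using $\gamma=\tau^\alpha$ with $\alpha\leq\beta$ converts the Savin bound into $|Du(y)-Du(0)|\leq C|y|^\alpha$ on $B_{\tau^{k+1}/4}$; for $\tau^{k+1}/4<|y|\leq\tau^{k}$ the crude bound $|Du(y)-Du(0)|\leq 2\gamma^k\leq C|y|^\alpha$ suffices; and for $\tau^{i+2}<|y|\leq\tau^{i+1}$ with $i<k$, Theorem \ref{thm:imposc} gives $|Du(y)-Du(0)|\leq 2\gamma^{i+1}\leq C|y|^\alpha$. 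Collecting the pieces yields $[Du]_{C^\alpha(B_{1/2})}\leq C(N,\hat p)$. The main obstacle is the bookkeeping: parameters must be fixed in the order $\beta\to\eta\to\theta\to\varepsilon_0,\varepsilon_1\to l,\mu\to\tau,\gamma\to\alpha$ so that everything depends only on $N,\hat p$, the three rescalings must preserve the form of the equation and the bound $|Dv|\leq 1$ (the latter uses $|Du|\leq\gamma^k$ on $B_{\tau^k}$, exactly what Theorem \ref{thm:imposc} provides up to step $k-1$), and the non-shrinking of $|\tilde q|$ under $q\mapsto\gamma^{-k}q$ is what keeps the $|q|>2$ branch of the hypotheses available after the rescaling.
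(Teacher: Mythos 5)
Your proposal is correct and follows essentially the same route as the paper's proof: the dichotomy on the first scale at which (\ref{eq:imposc cnd}) fails, Theorem \ref{thm:imposc} for the "good" scales, and Lemma \ref{lem:morrey lemma} combined with Lemma \ref{lem:small perturbation} at the failing scale, followed by the same scale-bridging and translation argument. The only differences are cosmetic (two-stage rescaling instead of rescaling directly to $B_{\tau^{k+1}}$, $\theta=\eta/4$ instead of $\eta/2$, and the normalization $\gamma=\tau^{\alpha}$, for which the inequality $\gamma\leq\tau^{\alpha}$ would already suffice).
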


\begin{proof}
For $\beta=1/2,$ let $\eta>0$ be as in Lemma \ref{lem:small perturbation}.
For $\theta=\eta/2$, let $\varepsilon_{0},\varepsilon_{1}$ be as
in Lemma \ref{lem:morrey lemma}. Set
\[
l:=1-\frac{\varepsilon_{0}^{2}}{2}\quad\text{and}\quad\mu:=\frac{\varepsilon_{1}}{\left|B_{1}\right|}.
\]
For these $l$ and $\mu$, let $\tau,\gamma\in(0,1)$ be as in Theorem
\ref{thm:imposc}. Let $k\geq0$ be the minimum integer such that
the condition (\ref{eq:imposc cnd}) does not hold. 

\textbf{Case $k=\infty$:} Theorem \ref{thm:imposc} implies that
\[
\left|Du\right|\leq\gamma^{i+1}\quad\text{in }B_{\tau^{i+1}}\text{ for all }i\geq0.
\]
Let $x\in B_{\tau}\setminus\left\{ 0\right\} $. Then $\tau^{i+1}\leq\left|x\right|\leq\tau^{i}$
for some $i\geq0$ and therefore
\[
i\leq\frac{\log\left|x\right|}{\log\tau}\leq i+1.
\]
We obtain
\begin{equation}
\left|Du(x)\right|\leq\gamma^{i}=\frac{1}{\gamma}\gamma^{i+1}\leq\frac{1}{\gamma}\gamma^{\frac{\log\left|x\right|}{\log\tau}}=\frac{1}{\gamma}\gamma^{\frac{\log\left|x\right|}{\log\gamma}\cdot\frac{\log\gamma}{\log\tau}}=:C\left|x\right|^{\alpha},\label{eq:regularized apriori 0}
\end{equation}
where $C=1/\gamma$ and $\alpha=\log\gamma/\log\tau$.

\textbf{Case $k<\infty$:} There is $d\in\partial B_{1}$ such that
\begin{equation}
\left|\left\{ x\in B_{\tau^{k+1}}:Du\cdot d\leq l\gamma^{k}\right\} \right|\leq\mu\left|B_{\tau^{k+1}}\right|.\label{eq:regularized apriori 1}
\end{equation}
We set 
\[
v(x):=\tau^{-k-1}\gamma^{-k}u(\tau^{k+1}x).
\]
Then $v$ solves
\[
-\Delta v-(p(\tau^{k+1}x)-2)\frac{\left\langle D^{2}v(Dv+\gamma^{-k}q),Dv+\gamma^{-k}q\right\rangle }{\left|Dv+\gamma^{-k}q\right|^{2}+\gamma^{-2k}\varepsilon^{2}}=0\quad\text{in }B_{1}
\]
and by (\ref{eq:regularized apriori 1}) we have
\begin{align}
\left|\left\{ x\in B_{1}:Dv\cdot d\leq l\right\} \right| & =\left|\left\{ x\in B_{1}:Du(\tau^{k+1}x)\cdot d\leq l\gamma^{k}\right\} \right|\nonumber \\
 & =\tau^{-N(k+1)}\left|\left\{ x\in B_{\tau^{k+1}}:Du(x)\cdot d\leq l\gamma^{k}\right\} \right|\nonumber \\
 & \leq\tau^{-N(k+1)}\mu\left|B_{\tau^{k+1}}\right|=\mu\left|B_{1}\right|=\varepsilon_{1}.\label{eq:regularized apriori 2}
\end{align}
Since either $k=0$ or (\ref{eq:imposc cnd}) holds for $k-1$, it
follows from Theorem \ref{thm:imposc} that $\left|Du\right|\leq\gamma^{k}$
in $B_{\tau^{k}}$. Thus
\begin{equation}
\left|Dv(x)\right|=\gamma^{-k}\left|Du(\tau^{k+1}x)\right|\leq1\quad\text{in }B_{1}.\label{eq:regularized apriori 3}
\end{equation}
For vectors $\xi,d\in B_{1}$, it is easy to verify the following
fact
\[
\left|\xi-d\right|>\varepsilon_{0}\implies\xi\cdot d\leq1-\varepsilon_{0}^{2}/2=l.
\]
Therefore, in view of (\ref{eq:regularized apriori 2}) and (\ref{eq:regularized apriori 3}),
we obtain
\[
\left|\left\{ x\in B_{1}:\left|Dv-d\right|>\varepsilon_{0}\right\} \right|\leq\varepsilon_{1}.
\]
Thus by Lemma \ref{lem:morrey lemma} there is $a\in\mathbb{R}$ such
that
\[
\left|v(x)-a-d\cdot x\right|\leq\theta=\eta/2\quad\text{for all }x\in B_{1/2}.
\]
Consequently, by applying Lemma \ref{lem:small perturbation} on the
function $2v(2^{-1}x)$, we find a positive constant $C(N,\hat{p})$
and $e\in\partial B_{1}$ such that
\[
\left|Dv(x)-e\right|\leq C\left|x\right|\quad\text{in }B_{1/4}.
\]
Since $\left|Dv\right|\leq1$, we have also
\[
\left|Dv(x)-e\right|\leq C\left|x\right|\quad\text{in }B_{1}.
\]
Recalling the definition of $v$ and taking $\alpha^{\prime}\in(0,1)$
so small that $\gamma/\tau^{\alpha^{\prime}}<1$ we obtain
\begin{equation}
\left|Du(x)-\gamma^{k}e\right|\leq C\gamma^{k}\tau^{-k-1}\left|x\right|\leq\frac{C}{\tau^{\alpha^{\prime}}}\left(\frac{\gamma}{\tau^{\alpha^{\prime}}}\right)^{k}\left|x\right|^{\alpha^{\prime}}\leq C\left|x\right|^{\alpha^{\prime}}\quad\text{in }B_{\tau^{k+1}},\label{eq:regularized apriori 4}
\end{equation}
where we absorbed $\tau^{\alpha^{\prime}}$ into the constant. On
the other hand, we have
\[
\left|Du\right|\leq\gamma^{i+1}\quad\text{in }B_{\tau^{i+1}}\text{ for all }i=0,\ldots,k-1
\]
so that, if $\tau^{i+2}\leq\left|x\right|\leq\tau^{i+1}$ for some
$i\in\left\{ 0,\ldots,k-1\right\} $, it holds that
\[
\left|Du(x)-\gamma^{k}e\right|\leq2\gamma^{i+1}\frac{\left|x\right|^{\alpha^{\prime}}}{\left|x\right|^{\alpha^{\prime}}}\leq\frac{2}{\tau^{\alpha^{\prime}}}\left(\frac{\gamma}{\tau^{\alpha^{\prime}}}\right)^{i+1}\left|x\right|^{\alpha^{\prime}}\le C\left|x\right|^{\alpha^{\prime}}.
\]
Combining this with (\ref{eq:regularized apriori 4}) we obtain
\begin{equation}
\left|Du(x)-\gamma^{k}e\right|\leq C\left|x\right|^{\alpha^{\prime}}\quad\text{in }B_{\tau}.\label{eq:regularized apriori 5}
\end{equation}

The claim now follows from (\ref{eq:regularized apriori 0}) and (\ref{eq:regularized apriori 5})
by standard translation arguments. \begin{details}

Let $x_{0}\in B_{1}$. We set
\[
v(x):=2u(\frac{1}{2}(x-x_{0})).
\]
Since $v$ solves
\[
\Delta v+(p(\frac{1}{2}(x-x_{0}))-2)\frac{\left\langle D^{2}v(Dv+q),Dv+q\right\rangle }{\left|Dv+q\right|^{2}+\varepsilon^{2}}=0\quad\text{in }B_{1},
\]
by Lemma \ref{thm:regularized apriori} there exists $C(p,N)$ and
$\alpha(p,N)$ such that 
\[
\left|Dv(x)-Dv(0)\right|\leq C\left|x\right|^{\alpha}\quad\text{for all }x\in B_{1/2}.
\]
In other words, we have
\[
\left|Du(\frac{1}{2}(x-x_{0}))-Du(\frac{1}{2}x_{0})\right|\leq C\left|x\right|^{\alpha}\quad\text{for all }x\in B_{1/2},x_{0}\in B_{1},
\]
from which it follows that
\[
\left|Du(x-x_{0})-Du(x_{0})\right|\le C\left|x\right|^{\alpha}\quad\text{for all }x\in B_{1/4},x_{0}\in B_{1/2}.
\]
This implies the Hölder estimate
\[
\left\Vert Du\right\Vert _{C^{\alpha}(B_{1/2})}\leq C.
\]
\end{details}
\end{proof}
\begin{thm}
\label{cor:h=0000F6lder estimate for regularized} Let $u$ be a bounded
viscosity solution to (\ref{eq:regularized homogeneous}) in $B_{1}$
with $q\in\mathbb{R}^{N}$. Then 
\begin{equation}
\left\Vert u\right\Vert _{C^{1,\alpha}(B_{1/2})}\leq C(N,\hat{p},\left\Vert u\right\Vert _{L^{\infty}(B_{1})})\label{eq:h=0000F6lder estimate for regularized 1}
\end{equation}
for some $\alpha(N,\hat{p})\in(0,1)$.
\end{thm}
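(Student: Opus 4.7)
The strategy is to remove the three restrictions of Lemma~\ref{thm:regularized apriori} (smoothness of $p$ and $u$, the normalization $|Du| \leq 1$, and the gap $0 < |q| \leq 2$) by approximation, rescaling, and translation, and then pass to the limit. First I would take smooth $p_k$ with $p_k \to p$ uniformly on $B_1$ and $\|Dp_k\|_{L^\infty(B_1)} \leq \|Dp\|_{L^\infty(B_1)}$, fix $r \in (1/2, 1)$, and solve the Dirichlet problem in $B_r$ for the regularized equation (with $p_k$ in place of $p$) and boundary data $u|_{\partial B_r}$; standard quasilinear theory yields a smooth solution $u_k$. The uniform Lipschitz estimate from the Appendix gives equicontinuity of the $u_k$ on compact subsets of $B_r$, and Arzel\`a--Ascoli combined with the stability of viscosity solutions and the comparison principle for the $p$-equation (also from the Appendix) implies $u_k \to u$ locally uniformly in $B_r$.

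The Appendix Lipschitz bound further provides $L = L(N, \hat{p}, \|u\|_{L^\infty(B_1)})$, independent of $k$, $q$, and $\varepsilon$, such that $\|Du_k\|_{L^\infty(B_{r'})} \leq L$ for some $r' \in (1/2, r)$. I then split according to the size of $|q|$. If $q = 0$ or $|q| > 2L$, set $v_k(x) := u_k(r'x)/(r'L)$; a direct calculation shows $v_k$ solves the same equation in $B_1$ with coefficient $p_k(r'\,\cdot\,)$, parameter $q/L$ (either zero or of modulus strictly greater than $2$), regularizer $\varepsilon/L$, and $|Dv_k| \leq 1$. If $0 < |q| \leq 2L$, I first translate by setting $w_k(x) := u_k(x) + q \cdot x$; then $w_k$ solves the equation with parameter $0$ and satisfies $|Dw_k| \leq 3L$, so the rescaling $v_k(x) := w_k(r'x)/(3r'L)$ yields the $q = 0$ equation in $B_1$ with $|Dv_k| \leq 1$. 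In either case the structural constants are preserved (the Lipschitz constant of $p_k(r'\,\cdot\,)$ is at most $p_L$), and Lemma~\ref{thm:regularized apriori} applied to $v_k$ produces $\|Dv_k\|_{C^\alpha(B_{1/2})} \leq C(N, \hat{p})$ with $\alpha = \alpha(N, \hat{p}) \in (0,1)$, uniformly in $k$ and $\varepsilon$.

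Unwinding the scaling yields $\|Du_k\|_{C^\alpha(B_{r'/2})} \leq C(N, \hat{p}, \|u\|_{L^\infty(B_1)})$ uniformly in $k$. Combined with $\|u_k\|_{L^\infty} \leq \|u\|_{L^\infty(B_1)}$, Arzel\`a--Ascoli extracts a subsequence converging in $C^{1, \alpha'}$ for every $\alpha' < \alpha$; uniform convergence identifies the limit with $u$, so the estimate transfers to $u$, and a standard covering argument extends it from $B_{r'/2}$ to $B_{1/2}$. The main technical point I anticipate is handling the dependence on $q$ cleanly: for large $|q|$ direct rescaling by $L$ moves us into the regime $|q/L| > 2$ covered by Lemma~\ref{thm:regularized apriori}, while for small $|q|$ the translation eliminates the $q$ term entirely, and both reductions hinge on the Appendix's Lipschitz bound being genuinely uniform in $q$ and $\varepsilon$.
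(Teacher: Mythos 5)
Your overall strategy is the same as the paper's: reduce to Lemma \ref{thm:regularized apriori} by a case analysis in $|q|$ (rescaling by the Lipschitz bound for large $|q|$, translating $q$ away for small $|q|$), and treat merely Lipschitz $p$ by smoothing, solving Dirichlet problems with boundary data $u$, and passing to the limit via equicontinuity, stability and comparison. However, there is a concrete gap in your case analysis. Theorem \ref{thm:Lipschitz estimate} gives the gradient bound only when $q=0$ or $|q|>\nu_0$; it says nothing for $0<|q|\le\nu_0$. You assert $\left\Vert Du_k\right\Vert_{L^{\infty}(B_{r'})}\le L$ ``independent of $q$'', and you use this both to justify the large-$|q|$ case with threshold $2L$ and to get $|Dw_k|\le 3L$ in the small-$|q|$ case. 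Nothing guarantees $\nu_0\le 2L$ (from the Appendix proof one rather expects $\nu_0$ to be of order $5L/2$, since $|a|,|b|\le 2L$ and one needs $|\eta_i|\ge L/2$), so the window $2L<|q|\le\nu_0$ is not covered: there the Lipschitz estimate does not apply to $u_k$ with parameter $q$, and your translation case excludes it. The repair is exactly the paper's choice of threshold $M:=2\max(\nu_0,C_0)$: for $|q|\le M$ translate, noting that $w_k=u_k+q\cdot x$ solves the $q=0$ equation and satisfies $\left\Vert w_k\right\Vert_{L^{\infty}}\le\left\Vert u\right\Vert_{L^{\infty}}+M$, so the $q=0$ Lipschitz estimate applies to $w_k$ directly and you never need a bound on $Du_k$ itself; for $|q|>M$ the Lipschitz estimate does apply to $u_k$ and the rescaled parameter has modulus $|q|/C_0>2$.

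Two smaller points on the limit procedure. Locally uniform interior convergence is not enough to identify the limit of the $u_k$ with $u$ by comparison: you need equicontinuity up to $\partial B_r$ so that the limit attains the boundary data $u$ there; the paper obtains this from the boundary estimates in Caffarelli--Cabr\'e (Theorem 4.14), not from the interior Lipschitz bound. Also, the comparison principle you need for the limit equation is not the one proved in the Appendix --- Lemma \ref{lem:comparison principle} concerns the equation with the proper zeroth-order term $-u$ --- but a comparison result for (\ref{eq:regularized homogeneous}) itself (the paper cites Kawohl--Kutev), which is available because $\varepsilon>0$ makes the equation uniformly elliptic with continuous dependence on the gradient.
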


\begin{proof}
Suppose first that $p$ is smooth. Let $\nu_{0}(N,\hat{p},\left\Vert u\right\Vert _{L^{\infty}(B_{1})})$
and $C_{0}(N,\hat{p},\left\Vert u\right\Vert _{L^{\infty}(B_{1})})$
be as in the Lipschitz estimate (Theorem \ref{thm:Lipschitz estimate}
in the Appendix) and set
\[
M:=2\max(\nu_{0},C_{0}).
\]

If $\left|q\right|>M$, then by Theorem \ref{lem:lipschitz lemma}
we have
\[
\left|Du\right|\leq C_{0}\quad\text{in }B_{1/2}.
\]
We set $\tilde{u}(x):=2u(x/2)/C_{0}$. Then $\left|D\tilde{u}\right|\leq1$
in $B_{1}$ and $\tilde{u}$ solves 
\[
-\Delta\tilde{u}-(p(x/2)-2)\frac{\left\langle D^{2}\tilde{u}(D\tilde{u}+q/C_{0}),D\tilde{u}+q/C_{0}\right\rangle }{\left|D\tilde{u}+q/C_{0}\right|^{2}+(\varepsilon/C_{0})^{2}}=0\quad\text{in }B_{1},
\]
where $q/C_{0}>2$. Thus by Theorem \ref{thm:regularized apriori}
we have
\[
\left\Vert D\tilde{u}\right\Vert _{C^{\alpha}(B_{1/2})}\leq C(N,\hat{p}),
\]
which implies (\ref{eq:h=0000F6lder estimate for regularized 1})
by standard translation arguments.

If $\left|q\right|\leq M$, we define
\[
w:=u-q\cdot x.
\]
Then by Theorem \ref{thm:Lipschitz estimate} we have
\[
\left|Dw\right|\leq C(N,\hat{p},\left\Vert w\right\Vert _{L^{\infty}(B_{1})})=:C^{\prime}(N,\hat{p},\left\Vert u\right\Vert _{L^{\infty}(B_{1})})\quad\text{in }B_{1/2}.
\]
We set $\tilde{w}(x):=2w(x/2)/C^{\prime}.$ Then $\left|D\tilde{w}\right|\leq1$
and so by Theorem \ref{lem:small perturbation} we have
\[
\left\Vert D\tilde{w}\right\Vert _{C^{\alpha}(B_{1/2})}\leq C(N,\hat{p}),
\]
which again implies (\ref{eq:h=0000F6lder estimate for regularized 1}).

Suppose then that $p$ is merely Lipschitz continuous. Take a sequence
$p_{j}\in C^{\infty}(B_{1})$ such that $p_{j}\rightarrow p$ uniformly
in $B_{1}$ and $\left\Vert Dp_{j}\right\Vert _{L^{\infty}(B_{1})}\leq\left\Vert Dp\right\Vert _{L^{\infty}(B_{1})}$.
For $r<1$, let $u_{j}$ be a solution to the Dirichlet problem
\[
\begin{cases}
-\Delta u_{j}-(p_{j}(x)-2)\frac{\left\langle D^{2}u(Du_{j}+q),Du_{j}+q\right\rangle }{\left|Du_{j}+q\right|^{2}+\varepsilon^{2}}=0 & \text{in }B_{r},\\
u_{j}=u & \text{on }B_{r}.
\end{cases}
\]
As observed in Proposition \ref{prop:c infty}, the solution exists
and we have $u_{j}\in C^{\infty}(B_{r})$. By comparison principle
$\left\Vert u_{j}\right\Vert _{L^{\infty}(B_{r})}\leq\left\Vert u\right\Vert _{L^{\infty}(B_{1})}$.
Then by the first part of the proof we have the estimate 
\[
\left\Vert u_{j}\right\Vert _{C^{1,\beta}(B_{r/2})}\leq C(N,\hat{p},\left\Vert u\right\Vert _{L^{\infty}(B_{1})}).
\]
By \cite[Theorem 4.14]{caffarelliCabre} the functions $u_{j}$ are
equicontinuous in $B_{1}$ and so by the Ascoli-Arzela theorem we
have $u_{j}\rightarrow v$ uniformly in $B_{1}$ up to a subsequence.
Moreover, by the stability principle $v$ is a solution to (\ref{eq:regularized homogeneous})
in $B_{r}$ and thus by comparison principle \cite[Theorem 2.6]{kawohlKutev07}
we have $v\equiv u$. By extracting a further subsequence, we may
ensure that also $Du_{j}\rightarrow Du$ uniformly in $B_{r/2}$ and
so the estimate $\left\Vert Du\right\Vert _{C^{1,\beta}(B_{r/2})}\leq C(N,\hat{p},\left\Vert u\right\Vert _{L^{\infty}(B_{1})})$
follows.
\end{proof}

\section{Hölder gradient estimates for the regularized inhomogeneous equation\label{sec:sec 2}}

In this section we consider the inhomogeneous equation 
\begin{equation}
-\Delta u-(p(x)-2)\frac{\left\langle D^{2}u(Du+q),Du+q\right\rangle }{\left|Du\right|^{2}+\varepsilon^{2}}=f(x)\quad\text{in }B_{1},\label{eq:non-homogeneous reguralized}
\end{equation}
where $p:B_{1}\rightarrow\mathbb{R}$ is Lipschitz continuous, $p_{\min}>1$,
$\varepsilon>0$, $q\in\mathbb{R}^{N}$ and $f\in C(B_{1})$ is bounded.
We apply the $C^{1,\alpha}$-estimates obtained in Theorem \ref{cor:h=0000F6lder estimate for regularized}
to prove regularity estimates for solutions of (\ref{eq:non-homogeneous reguralized})
with $q=0$. Our arguments are similar to those in \cite[Section 3]{attouchiParv},
see also \cite{imbertSilvestre12}. The idea is to use the well known
characterization of $C^{1,\alpha}$-regularity via affine approximates.
The following lemma plays a key role: It states that if $f$ is small,
then a solution to (\ref{eq:non-homogeneous reguralized}) can be
approximated by an affine function. This combined with scaling properties
of the equation essentially yields the desired affine functions.
\begin{lem}
\label{lem:non-homogeneous regularized first lemma}There exist constants
$\epsilon(N,\hat{p})$,$\tau(N,\hat{p})\in(0,1)$ such that the following
holds: If $\left\Vert f\right\Vert _{L^{\infty}(B_{1})}\leq\epsilon$
and $w$ is a viscosity solution to (\ref{eq:non-homogeneous reguralized})
in $B_{1}$ with $q\in\mathbb{R}^{N}$, $w(0)=0$ and $\osc_{B_{1}}w\leq1$,
then there exists $q^{\prime}\in\mathbb{R}^{N}$ such that
\[
\osc_{B_{\tau}}(w(x)-q^{\prime}\cdot x)\leq\frac{1}{2}\tau.
\]
 Moreover, we have $\left|q^{\prime}\right|\le C(N,\hat{p})$.
\end{lem}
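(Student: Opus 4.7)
The plan is a compactness and contradiction argument in the spirit of \cite{imbertSilvestre12,attouchiParv}. Suppose the lemma fails for whatever $\tau$ one eventually chooses. Then there are sequences $\varepsilon_{k}>0$, $q_{k}\in\mathbb{R}^{N}$, $f_{k}\in C(B_{1})$ with $\|f_{k}\|_{L^{\infty}(B_{1})}\to 0$, and viscosity solutions $w_{k}$ of (\ref{eq:non-homogeneous reguralized}) with these data satisfying $w_{k}(0)=0$ and $\osc_{B_{1}}w_{k}\le 1$, yet such that no $q'\in\mathbb{R}^{N}$ with $|q'|\le C_{0}$ achieves $\osc_{B_{\tau}}(w_{k}-q'\cdot x)\le \tau/2$.

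First I would apply the uniform Lipschitz estimate (Theorem~\ref{thm:Lipschitz estimate} in the Appendix) to obtain $|Dw_{k}|\le L(N,\hat{p})$ in $B_{3/4}$, a bound independent of $k$, $\varepsilon_{k}$, $q_{k}$, $f_{k}$. By Arzel\`a--Ascoli a subsequence $w_{k}\to w_{\infty}$ converges uniformly on $B_{3/4}$, with $w_{\infty}(0)=0$ and $|Dw_{\infty}|\le L$. Along further subsequences, $\varepsilon_{k}\to\varepsilon_{\infty}\in[0,\infty]$, and either $q_{k}\to q_{\infty}\in\mathbb{R}^{N}$ or $|q_{k}|\to\infty$ with $q_{k}/|q_{k}|\to\hat{q}_{\infty}\in\partial B_{1}$.

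The next step is to pass to the limit in the equation via viscosity stability and identify $w_{\infty}$ as a solution of a homogeneous equation covered by Section~\ref{sec:regularized homogeneous}. In the case $|q_{\infty}|<\infty$, after absorbing $q_{\infty}$ by a translation/scaling one recovers an equation of the form (\ref{eq:regularized homogeneous}) (with $\varepsilon=\varepsilon_{\infty}\in[0,\infty]$; if $\varepsilon_{\infty}=\infty$ the equation reduces simply to $\Delta w_{\infty}=0$). Theorem~\ref{cor:h=0000F6lder estimate for regularized} then yields $\|w_{\infty}\|_{C^{1,\alpha}(B_{1/2})}\le C(N,\hat{p})$. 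Setting $q':=Dw_{\infty}(0)$, which has $|q'|\le L$, the $C^{1,\alpha}$-bound together with $w_{\infty}(0)=0$ implies
\[
\osc_{B_{\tau}}(w_{\infty}-q'\cdot x)\le C\tau^{1+\alpha}.
\]
Choosing $\tau$ so that $C\tau^{\alpha}\le 1/4$ yields $\osc\le\tau/4$, and the uniform convergence $w_{k}\to w_{\infty}$ then gives $\osc_{B_{\tau}}(w_{k}-q'\cdot x)\le \tau/2$ for $k$ large, the desired contradiction.

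The main obstacle is the case $|q_{k}|\to\infty$, where the natural limit equation degenerates to the first-order-degenerate constraint $\langle D^{2}w_{\infty}\hat{q}_{\infty},\hat{q}_{\infty}\rangle=0$ so that Theorem~\ref{cor:h=0000F6lder estimate for regularized} does not apply directly. This subcase must be handled separately, for instance by rescaling to $\tilde{w}_{k}:=|q_{k}|^{-1}w_{k}$, which satisfies a version of (\ref{eq:non-homogeneous reguralized}) with normalized direction $\hat{q}_{k}$ and vanishing regularization and right-hand side, so as to recover a solvable limit; alternatively, one can argue directly that in this regime $w_{\infty}$ is affine in the direction $\hat{q}_{\infty}$ with controlled tangential behaviour, which suffices to build an admissible $q'$.
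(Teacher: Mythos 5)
Your overall scheme (contradiction plus compactness, split into $q_k$ bounded/unbounded) is the same as the paper's, and your bounded case is essentially the paper's argument. But the unbounded case, which you yourself flag as the main obstacle, is left with a genuine gap, and what you sketch there does not work. First, your description of the limit is wrong: as $|q_k|\rightarrow\infty$ the coefficient $\frac{(Dw_k+q_k)\otimes(Dw_k+q_k)}{|Dw_k+q_k|^{2}+\varepsilon_k^{2}}$ converges (using the uniform gradient/H\"older bounds) to $d_\infty\otimes d_\infty$ with $d_\infty=\lim q_k/|q_k|$, so the limit equation is \emph{not} the degenerate constraint $\langle D^{2}w_\infty d_\infty,d_\infty\rangle=0$ but the linear, uniformly elliptic equation $-\Delta w_\infty-(p_\infty(x)-2)\langle D^{2}w_\infty d_\infty,d_\infty\rangle=0$; the Laplacian survives. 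This is exactly how the paper handles the case: one rewrites the coefficient by dividing numerator and denominator by $|q_k|^{2}$ (without rescaling $w_k$), passes to the limit by stability, and then invokes the interior $C^{1,\alpha}$ estimate for linear uniformly elliptic equations with continuous coefficients \cite[Theorem 8.3]{caffarelliCabre} to produce the affine approximation of $w_\infty$ with constants depending only on $N,\hat p$. Your proposed fix, rescaling $\tilde w_k:=|q_k|^{-1}w_k$, destroys the information: since $\operatorname{osc}_{B_1}w_k\leq1$, $\tilde w_k\rightarrow0$ uniformly, and the trivial limit tells you nothing about $w_\infty$; the alternative "argue that $w_\infty$ is affine in the direction $\hat q_\infty$" is both unsubstantiated and not what the limit equation gives.

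Two smaller points. The Lipschitz estimate (Theorem \ref{thm:Lipschitz estimate}) is only stated for $|q|=0$ or $|q|>\nu_0$, so you cannot quote it for a bound "independent of $q_k$" without first handling $0<|q_k|\leq\nu_0$ (e.g.\ by the shift $w_k+q_k\cdot x$, or, as the paper does, by using instead the uniform H\"older estimate \cite[Proposition 4.10]{caffarelliCabre}, which needs no case distinction and suffices for compactness). Also, since the constants $\epsilon,\tau$ are claimed to depend only on $N$ and $\hat p$ (and this is what is used later when the lemma is applied to rescaled exponents $p(\tau^{k}\cdot)$), the contradiction sequence must also allow a sequence of exponents $p_k$ with the same bounds $p_{\min},p_{\max},p_L$, converging uniformly to some $p_\infty$; with $p$ fixed, your argument only yields constants depending on the particular function $p$.
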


\begin{proof}
Suppose on the contrary that the claim does not hold. Then, for a
fixed $\tau(N,\hat{p})$ that we will specify later, there exists
a sequence of Lipschitz continuous functions $p_{j}:B_{1}\rightarrow\mathbb{R}$
such that 
\[
p_{\min}\leq\inf_{B_{1}}p_{j}\leq\sup_{B_{1}}p_{j}\leq p_{\max}\quad\text{and}\quad(p_{j})_{L}\leq p_{L},
\]
 functions $f_{j}\in C(B_{1})$ such that $f_{j}\rightarrow0$ uniformly
in $B_{1}$, vectors $q_{j}\in\mathbb{R}^{N}$ and viscosity solutions
$w_{j}$ to 
\[
-\Delta w_{j}-(p_{j}(x)-2)\frac{\left\langle D^{2}w_{j}(Dw_{j}+q_{j}),Dw_{j}+q_{j}\right\rangle }{\left|Dw_{j}+q_{j}\right|^{2}+\varepsilon^{2}}=f_{j}(x)\quad\text{in }B_{1}
\]
such that $w_{j}(0)=0$, $\osc_{B_{1}}w_{j}\leq1$ and 
\begin{equation}
\osc_{B_{\tau}}(w_{j}(x)-q^{\prime}\cdot x)>\frac{\tau}{2}\quad\text{for all }q^{\prime}\in\mathbb{R}^{N}.\label{eq:first 0}
\end{equation}
By \cite[Proposition 4.10]{caffarelliCabre}, the functions $w_{j}$
are uniformly Hölder continuous in $B_{r}$ for any $r\in(0,1)$.
Therefore by the Ascoli-Arzela theorem, we may extract a subsequence
such that $w_{j}\rightarrow w_{\infty}$ and $p_{j}\rightarrow p_{\infty}$
uniformly in $B_{r}$ for any $r\in(0,1)$. Moreover, $p_{\infty}$
is $p_{L}$-Lipschitz continuous and $p_{\min}\leq p_{\infty}\leq p_{\max}$.
It then follows from (\ref{eq:first 0}) that

\begin{equation}
\osc_{B_{\tau}}(w_{\infty}(x)-q^{\prime}\cdot x)>\frac{\tau}{2}\quad\text{for all }q^{\prime}\in\mathbb{R}^{N}.\label{eq:first 1}
\end{equation}
We have two cases: either $q_{j}$ is bounded or unbounded.

\textbf{Case $q_{j}$ is bounded: }In this case $q_{j}\rightarrow q_{\infty}\in\mathbb{R}^{N}$
up to a subsequence. It follows from the stability principle that
$w_{\infty}$ is a viscosity solution to
\begin{equation}
-\Delta w_{\infty}-(p_{\infty}(x)-2)\frac{\left\langle D^{2}w_{\infty}(Dw_{\infty}+q_{\infty}),Dw_{\infty}+q_{\infty}\right\rangle }{\left|Dw_{\infty}+q_{\infty}\right|^{2}+\varepsilon^{2}}=0\quad\text{in }B_{1}.\label{eq:first -1}
\end{equation}
Hence by Theorem \ref{cor:h=0000F6lder estimate for regularized}
we have $\left\Vert Dw_{\infty}\right\Vert _{C^{\beta_{1}}(B_{1/2})}\leq C(N,\hat{p})$
for some $\beta_{1}(N,\hat{p})$. The mean value theorem then implies
the existence of $q^{\prime}\in\mathbb{R}^{N}$ such that
\[
\osc_{B_{r}}(u-q^{\prime}\cdot x)\leq C_{1}(N,\hat{p})r^{1+\beta_{1}}\quad\text{for all }r\leq\frac{1}{2}.
\]

\textbf{Case $q_{j}$ is unbounded:} In this case we take a subsequence
such that $\left|q_{j}\right|\rightarrow\infty$ and the sequence
$d_{j}:=d_{j}/\left|d_{j}\right|$ converges to $d_{\infty}\in\partial B_{1}$.
Then $w_{j}$ is a viscosity solution to
\[
-\Delta w_{j}-(p_{j}(x)-2)\frac{\left\langle D^{2}w_{j}(\sa q_{j}\sa^{-1}Dw_{j}+d_{j}),\sa q_{j}\sa^{-1}Dw_{j}+d_{j}\right\rangle }{\left|\left|q_{j}\right|^{-1}Dw_{j}+d_{j}\right|^{2}+\left|q_{j}\right|^{-2}\varepsilon^{2}}=f_{j}(x)\quad\text{in }B_{1}.
\]
It follows from the stability principle that $w_{\infty}$ is a viscosity
solution to
\[
-\Delta w_{j}-(p_{\infty}(x)-2)\left\langle D^{2}w_{\infty}d_{\infty},d_{\infty}\right\rangle =0\quad\text{in }B_{1}.
\]
By \cite[Theorem 8.3]{caffarelliCabre} there exist positive constants
$\beta_{2}(N,\hat{p})$, $C_{2}(N,\hat{p})$, $r_{2}(N,\hat{p})$
and a vector $q^{\prime}\in\mathbb{R}^{N}$ such that 
\[
\osc_{B_{r}}(w_{\infty}-q^{\prime}\cdot x)\leq C_{2}r^{1+\beta_{2}}\quad\text{for all }r\leq r_{2}.
\]

We set $C_{0}:=\max(C_{1},C_{2})$ and $\beta_{0}:=\min(\beta_{1},\beta_{2})$.
Then by the two different cases there always exists a vector $q^{\prime}\in\mathbb{R}^{N}$
such that
\[
\osc_{B_{r}}(w_{\infty}-q^{\prime}\cdot x)\leq C_{0}r^{1+\beta_{0}}\quad\text{for all }r\leq\min(\frac{1}{2},r_{2}).
\]
We take $\tau$ so small that $C_{0}\tau^{\beta_{0}}\leq\frac{1}{4}$
and $\tau\leq\min(\frac{1}{2},r_{2})$. Then, by substituting $r=\tau$
in the above display, we obtain
\begin{equation}
\osc_{B_{\tau}}(w_{\infty}-q^{\prime}\cdot x)\leq C_{0}\tau^{\beta_{0}}\tau\leq\frac{1}{4}\tau,\label{eq:first 2}
\end{equation}
which contradicts (\ref{eq:first 1}).

The bound $\left|q^{\prime}\right|\le C(N,\hat{p})$ follows by observing
that (\ref{eq:first 2}) together with the assumption $\osc_{B_{1}}w\leq1$
yields $\left|q^{\prime}\right|\leq C$. Thus the contradiction is
still there even if (\ref{eq:first 1}) is weakened by requiring additionally
that $\left|q^{\prime}\right|\leq C$.
\end{proof}
\begin{lem}
\label{lem:second lemma} Let $\tau(N,\hat{p})$ and $\epsilon(N,\hat{p})$
be as in Lemma \ref{lem:non-homogeneous regularized first lemma}.
If $\left\Vert f\right\Vert _{L^{\infty}(B_{1})}\leq\epsilon$ and
$u$ is a viscosity solution to (\ref{eq:non-homogeneous reguralized})
in $B_{1}$ with $q=0$, $u(0)=0$ and $\osc_{B_{1}}u\leq1$, then
there exists $\alpha\in(0,1)$ and $q_{\infty}\in\mathbb{R}^{N}$
such that
\[
\sup_{B_{\tau^{k}}}\left|u(x)-q_{\infty}\cdot x\right|\leq C(N,\hat{p})\tau^{k(1+\alpha)}\quad\text{for all }k\in\mathbb{N}.
\]
\end{lem}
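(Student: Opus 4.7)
The plan is to iterate Lemma \ref{lem:non-homogeneous regularized first lemma} on dyadic rescalings of $u$. Concretely, I will construct inductively a sequence $(p_k)_{k\ge 0}\subset\mathbb{R}^N$ such that
\[
\osc_{B_{\tau^k}}(u-p_k\cdot x)\le \tau^{k(1+\alpha)} \qquad \text{and} \qquad |p_{k+1}-p_k|\le C\tau^{k\alpha},
\]
where $\alpha\in(0,1)$ is fixed so small that $\tau^{\alpha}\ge 1/2$ and $C=C(N,\hat p)$ comes from Lemma \ref{lem:non-homogeneous regularized first lemma}. The base case $k=0$ is taken care of by $p_0:=0$ and the assumption $\osc_{B_1}u\le 1$.

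For the induction step, assume $p_k$ has been constructed and define
\[
u_k(x):=\tau^{-k(1+\alpha)}\bigl(u(\tau^k x)-p_k\cdot\tau^k x\bigr),
\]
so $u_k(0)=0$ and $\osc_{B_1}u_k\le 1$. A direct computation, using zero-homogeneity of the normalized operator in the pair $(\eta+q,\varepsilon)$, shows that $u_k$ is a viscosity solution in $B_1$ to an equation of exactly the same form as (\ref{eq:non-homogeneous reguralized}): the exponent $p(\tau^k\cdot)$ has the same $p_{\min},p_{\max}$ and Lipschitz constant at most $p_L$; the shift is $\tilde q_k=\tau^{-k\alpha}p_k$; the regularization is $\tilde\varepsilon_k=\tau^{-k\alpha}\varepsilon$; and the right-hand side is $\tilde f_k(x)=\tau^{k(1-\alpha)}f(\tau^k x)$, whose sup-norm is bounded by $\|f\|_\infty\le\epsilon$ because $\alpha<1$. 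Thus Lemma \ref{lem:non-homogeneous regularized first lemma} applies to $u_k$ and produces $\bar q_{k+1}\in\mathbb{R}^N$ with $|\bar q_{k+1}|\le C(N,\hat p)$ and $\osc_{B_\tau}(u_k-\bar q_{k+1}\cdot x)\le\tau/2$. Setting $p_{k+1}:=p_k+\tau^{k\alpha}\bar q_{k+1}$ and undoing the rescaling gives
\[
\osc_{B_{\tau^{k+1}}}(u-p_{k+1}\cdot x)\le \frac{\tau^{k(1+\alpha)+1}}{2}\le \tau^{(k+1)(1+\alpha)},
\]
where the last step is exactly the condition $\tau^{\alpha}\ge 1/2$.

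The telescoping estimate $|p_{k+1}-p_k|\le C\tau^{k\alpha}$ then shows that $(p_k)$ is Cauchy; set $q_\infty:=\lim_k p_k$, so that $|p_k-q_\infty|\le C\tau^{k\alpha}/(1-\tau^{\alpha})$. Combining this with the oscillation bound and the anchor $u(0)-p_k\cdot 0=0$,
\[
\sup_{B_{\tau^k}}|u(x)-q_\infty\cdot x|\le \osc_{B_{\tau^k}}(u-p_k\cdot x)+|p_k-q_\infty|\tau^k\le C(N,\hat p)\tau^{k(1+\alpha)}.
\]
The main obstacle is the bookkeeping in the scaling step: one must verify that the rescaled equation retains exactly the structural form of (\ref{eq:non-homogeneous reguralized}), with the inductively produced tilt $p_k$ playing the role of the ``$q$'' in Lemma \ref{lem:non-homogeneous regularized first lemma} (this is precisely why that lemma was stated for arbitrary $q\in\mathbb{R}^N$), and that the $\varepsilon$-regularization, which gets rescaled in lockstep with $q$, causes no loss. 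Everything else is standard iteration and telescoping.
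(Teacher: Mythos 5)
Your proposal is correct and follows essentially the same route as the paper: the same inductive construction of tilts via rescaled solutions $\tau^{-k(1+\alpha)}(u(\tau^k x)-q_k\cdot\tau^k x)$ fed into Lemma \ref{lem:non-homogeneous regularized first lemma} (whose arbitrary-$q$ formulation absorbs the accumulated tilt and rescaled $\varepsilon$), the choice of $\alpha$ with $\tau^\alpha\geq 1/2$, and the telescoping/Cauchy argument to produce $q_\infty$. Your explicit check that $\|\tau^{k(1-\alpha)}f(\tau^k\cdot)\|_{L^\infty}\leq\epsilon$ and the use of the anchor $u(0)=0$ to pass from oscillation to a sup bound are exactly the (implicit) steps of the paper's proof.
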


\begin{proof}
\textbf{Step 1:} We show that there exists a sequence $(q_{k})_{k=0}^{\infty}\subset\mathbb{R}^{N}$
such that 
\begin{equation}
\osc_{B_{\tau^{k}}}(u(x)-q_{k}\cdot x)\leq\tau^{k(1+\alpha)}.\label{eq:second lemma 1}
\end{equation}
When $k=0$, this estimate holds by setting $q_{0}=0$ since $u(0)=0$
and $\osc_{B_{1}}\leq1$. Next we take $\alpha\in(0,1)$ such that
$\tau^{\alpha}>\frac{1}{2}$. We assume that $k\ge0$ and that we
have already constructed $q_{k}$ for which (\ref{eq:second lemma 1})
holds. We set
\[
w_{k}(x):=\tau^{-k(1+\alpha)}(u(\tau^{k}x)-q_{k}\cdot(\tau^{k}x))
\]
and
\[
f_{k}(x):=\tau^{k(1-\alpha)}f(\tau^{k}x).
\]
Then by induction assumption $\osc_{B_{1}}(w_{k})\leq1$ and $w_{k}$
is a viscosity solution to
\[
-\Delta w_{k}-\frac{(p(\tau^{k}x)-2)\left\langle D^{2}w_{k}(Dw_{k}+\tau^{-k\alpha}q_{k}),Dw_{k}+\tau^{-k\alpha}q_{k}\right\rangle }{\left|Dw_{k}+\tau^{-k\alpha}q_{k}\right|^{2}+(\tau^{-k\alpha}\varepsilon)^{2}}=f_{k}(x)\quad\text{in }B_{1}.
\]
By Lemma \ref{lem:non-homogeneous regularized first lemma} there
exists $q_{k}^{\prime}\in\mathbb{R}^{N}$ with $\left|q_{k}^{\prime}\right|\leq C(N,\hat{p})$
such that 
\[
\osc_{B_{\tau}}(w_{k}(x)-q_{k}^{\prime}\cdot x)\leq\frac{1}{2}\tau.
\]
Using the definition of $w_{k}$, scaling to $B_{\tau^{k+1}}$ and
dividing by $\tau^{-k(\alpha+1)}$, we obtain from the above
\[
\osc_{B_{\tau^{k+1}}}(u(x)-(q_{k}+\tau^{k\alpha}q_{k}^{\prime})\cdot x)\leq\frac{1}{2}\tau^{1+k(1+\alpha)}\leq\tau^{(k+1)(1+\alpha)}.
\]
Denoting $q_{k+1}:=q_{k}+\tau^{k\alpha}q_{k}^{\prime}$, the above
estimate is condition (\ref{eq:second lemma 1}) for $k+1$ and the
induction step is complete.

\textbf{Step 2:} Observe that whenever $m>k$, we have
\[
\left|q_{m}-q_{k}\right|\leq\sum_{i=k}^{m-1}\tau^{i\alpha}\left|q_{i}^{\prime}\right|\leq C(N,\hat{p})\sum_{i=k}^{m-1}\tau^{i\alpha}.
\]
Therefore $q_{k}$ is a Cauchy sequence and converges to some $q_{\infty}\in\mathbb{R}^{N}$.
Thus
\[
\sup_{x\in B_{\tau^{k}}}(q_{k}\cdot x-q_{\infty}\cdot x)\leq\tau^{k}\left|q_{k}-q_{\infty}\right|\leq\tau^{k}\sum_{i=k}^{\infty}\tau^{i\alpha}q_{i}^{\prime}\leq C(N,\hat{p})\tau^{k(1+\alpha)}.
\]
This with (\ref{eq:second lemma 1}) implies that
\[
\sup_{x\in B_{\tau^{k}}}\left|u(x)-q_{\infty}\cdot x\right|\leq C(N,\hat{p})\tau^{k(1+\alpha)}.\qedhere
\]
\end{proof}
\begin{thm}
\label{cor:main corollary}Suppose that $u$ is a viscosity solution
to (\ref{eq:non-homogeneous reguralized}) in $B_{1}$ with $q=0$
and $\osc_{B_{1}}\leq1$. Then there are constants $\alpha(N,\hat{p})$
and $C(N,\hat{p},\left\Vert f\right\Vert _{L^{\infty}(B_{1})})$ such
that 
\[
\left\Vert u\right\Vert _{C^{1,\alpha}(B_{1/2})}\leq C.
\]
\end{thm}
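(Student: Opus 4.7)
The plan is to reduce the statement to Lemma \ref{lem:second lemma} by a translation-and-rescale argument applied at every base point of $B_{1/2}$, and then to upgrade the resulting pointwise affine approximations at dyadic scales to a uniform $C^{1,\alpha}$ bound by a standard comparison of neighboring affine approximants.

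First I would normalize the source. Let $\epsilon=\epsilon(N,\hat{p})$ be the smallness threshold of Lemma \ref{lem:non-homogeneous regularized first lemma} and fix a radius $\rho=\rho(N,\hat{p},\|f\|_{L^{\infty}(B_{1})})\in(0,1/2]$ with $\rho^{2}\|f\|_{L^{\infty}(B_{1})}\le\epsilon$. For each $x_{0}\in B_{1/2}$, the rescaled function
\[
w(x):=u(x_{0}+\rho x)-u(x_{0})
\]
is a viscosity solution in $B_{1}$ of (\ref{eq:non-homogeneous reguralized}) with $q=0$, regularization parameter $\rho\varepsilon$, exponent $x\mapsto p(x_{0}+\rho x)$ (whose $p_{\min},p_{\max},p_{L}$ parameters are controlled by those of $p$), and source $\tilde{f}(x):=\rho^{2}f(x_{0}+\rho x)$. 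By construction $w(0)=0$, $\osc_{B_{1}}w\le\osc_{B_{1}}u\le 1$, and $\|\tilde{f}\|_{L^{\infty}(B_{1})}\le\epsilon$, so Lemma \ref{lem:second lemma} applies and yields $\alpha(N,\hat{p})\in(0,1)$ together with $q^{(x_{0})}\in\mathbb{R}^{N}$, $|q^{(x_{0})}|\le C(N,\hat{p})$, such that
\[
\sup_{B_{\tau^{k}}}\left|w(y)-q^{(x_{0})}\cdot y\right|\le C(N,\hat{p})\,\tau^{k(1+\alpha)}\quad\text{for every }k\in\mathbb{N}.
\]
Undoing the scaling and writing $a(x_{0}):=\rho^{-1}q^{(x_{0})}$, this becomes
\[
\sup_{B_{r}(x_{0})}\left|u(z)-u(x_{0})-a(x_{0})\cdot(z-x_{0})\right|\le C\,r^{1+\alpha},
\]
valid first for $r=\rho\tau^{k}$ and then, after paying one factor $\tau^{-(1+\alpha)}$ to fill in intermediate scales, for every $r\in(0,\rho]$, with $C=C(N,\hat{p},\|f\|_{L^{\infty}(B_{1})})$.

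The final step is the standard upgrade. Letting $r\to 0$ shows that $u$ is differentiable at every $x_{0}\in B_{1/2}$ with $Du(x_{0})=a(x_{0})$ and $|Du(x_{0})|\le C$. For $x_{0},y_{0}\in B_{1/2}$ with $r:=|x_{0}-y_{0}|$ small, the affine approximants at the two points both agree with $u$ within $Cr^{1+\alpha}$ on $B_{2r}(x_{0})\cap B_{2r}(y_{0})$; subtracting them and evaluating at two points a distance of order $r$ apart yields $|Du(x_{0})-Du(y_{0})|\le Cr^{\alpha}$, hence $\|u\|_{C^{1,\alpha}(B_{1/2})}\le C$. The main technical nuisance is bookkeeping the constants through the rescaling and checking that the translated exponent $p(x_{0}+\rho\,\cdot)$ preserves the structural quantities of $p$, but no conceptual obstacle arises beyond Lemma \ref{lem:second lemma} itself.
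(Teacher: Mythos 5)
Your proposal is correct and follows essentially the same route as the paper: both reduce to Lemma \ref{lem:second lemma} by translating to each base point and scaling so that the source term falls below the threshold $\epsilon$, and then pass from the affine approximations at all scales to the $C^{1,\alpha}$ bound via the standard argument (the paper cites \cite[Lemma A.1]{attouchiParv} for this step). The only difference is cosmetic: the paper makes the right-hand side small by multiplying $u$ by $\kappa=\epsilon(1+\left\Vert f\right\Vert _{L^{\infty}(B_{1})})^{-1}$ (plus a fixed spatial dilation), whereas you shrink space by $\rho$ with $\rho^{2}\left\Vert f\right\Vert _{L^{\infty}(B_{1})}\leq\epsilon$, which is equally valid since the rescaled equation keeps the same structure with exponent $p(x_{0}+\rho\,\cdot)$, parameter $\rho\varepsilon$, and source $\rho^{2}f(x_{0}+\rho\,\cdot)$.
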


\begin{proof}
Let $\epsilon(N,\hat{p})$ and $\tau(N,\hat{p})$ be as in Lemma \ref{lem:second lemma}.
Set
\[
v(x):=\kappa u(x/4)
\]
where $\kappa:=\epsilon(1+\left\Vert f\right\Vert _{L^{\infty}(B_{1})})^{-1}$.
For $x_{0}\in B_{1}$, set
\[
w(x):=v(x+x_{0})-v(x_{0}).
\]
Then $\osc_{B_{1}}w\leq1$, $w(0)=0$ and $w$ is a viscosity solution
to
\[
-\Delta w-\frac{(p(x/4+x_{0}/4)-2)\left\langle D^{2}wDw,Dw\right\rangle }{\left|Dw\right|^{2}+\varepsilon^{2}\kappa^{2}/4^{2}}=g(x)\quad\text{in }B_{1},
\]
where $g(x):=\kappa f(x/4+x_{0}/4)/4^{2}$. Now $\left\Vert g\right\Vert _{L^{\infty}(B_{1})}\leq\epsilon$
so by Lemma \ref{lem:second lemma} there exists $q_{\infty}(x_{0})\in\mathbb{R}^{N}$
such that
\[
\sup_{x\in B_{\tau^{k}}}\left|w(x)-q_{\infty}(x_{0})\cdot x\right|\leq C(N,\hat{p})\tau^{k(1+\alpha)}\quad\text{for all }k\in\mathbb{N}.
\]
Thus we have shown that for any $x_{0}\in B_{1}$ there exists a vector
$q_{\infty}(x_{0})$ such that
\[
\sup_{x\in B_{r}(x_{0})}\left|v(x)-v(x_{0})-q_{\infty}(x_{0})\cdot(x-x_{0})\right|\leq C(N,\hat{p})r{}^{1+\alpha}\quad\text{for all }r\in(0,1].
\]
This together with a standard argument (see for example \cite[Lemma A.1]{attouchiParv})
implies that $[Dv]_{C^{\alpha}(B_{1})}\leq C(N,\hat{p})$ and so by
defintion of $v$, also $[Du]_{C^{\alpha}(B_{1/4})}\leq C(N,\hat{p},\left\Vert f\right\Vert _{L^{\infty}(B_{1})})$.
The conclusion of the theorem then follows by a standard translation
argument.
\end{proof}

\section{Proof of the main theorem}

In this section we finish the proof our main theorem.
\begin{proof}[Proof of Theorem \ref{thm:main-1}]
We may assume that $u\in C(\overline{B}_{1})$. By Comparison Principle
(Lemma \ref{lem:comparison principle} in the Appendix) $u$ is the
unique viscosity solution to
\begin{equation}
\begin{cases}
-\Delta v-\frac{(p(x)-2)\left\langle D^{2}vDv,Dv\right\rangle }{\left|Dv\right|^{2}}=f(x)+u-v & \text{in }B_{1},\\
v=u & \text{on }\partial B_{1}.
\end{cases}\label{eq:main 1}
\end{equation}
By \cite[Theorem 15.18]{gilbargTrudinger01} there exists a classical
solution $u_{\varepsilon}$ to the approximate problem
\[
\begin{cases}
-\Delta u_{\varepsilon}-\frac{(p_{\varepsilon}(x)-2)\left\langle D^{2}u_{\varepsilon}Du_{\varepsilon},Du_{\varepsilon}\right\rangle }{\left|Du_{\varepsilon}\right|^{2}+\varepsilon^{2}}=f_{\varepsilon}(x)+u-u_{\varepsilon} & \text{in }B_{1},\\
u_{\varepsilon}=u & \text{on }\partial B_{1},
\end{cases}
\]
where $p_{\varepsilon},f_{\varepsilon},u_{\varepsilon}\in C^{\infty}(B_{1})$
are such that $p_{\varepsilon}\rightarrow p$, $f_{\varepsilon}\rightarrow f$
and $u_{\varepsilon}\rightarrow u_{0}$ uniformly in $B_{1}$ as $\varepsilon\rightarrow0$
and $\left\Vert Dp_{\varepsilon}\right\Vert _{L^{\infty}(B_{1})}\leq\left\Vert Dp\right\Vert _{L^{\infty}(B_{1})}$.
The maximum principle implies that $\left\Vert u_{\varepsilon}\right\Vert _{L^{\infty}(B_{1})}\leq2\left\Vert f\right\Vert _{L^{\infty}(B_{1})}+2\left\Vert u\right\Vert _{L^{\infty}(B_{1})}$.
By \cite[Proposition 4.14]{caffarelliCabre} the solutions $u_{\varepsilon}$
are equicontinuous in $\overline{B}_{1}$ (their modulus of continuity
depends only on $N$, $p$, $\left\Vert f\right\Vert _{L^{\infty}(B_{1})}$,
$\left\Vert u\right\Vert _{L^{\infty}(B_{1})}$ and modulus of continuity
of $u$). Therefore by the Ascoli-Arzela theorem we have $u_{\varepsilon}\rightarrow v\in C(\overline{B}_{1})$
uniformly in $\overline{B}_{1}$ up to a subsequence. By the stability
principle, $v$ is a viscosity solution to (\ref{eq:main 1}) and
thus by uniqueness $v\equiv u$. 

By Corollary \ref{cor:main corollary} we have $\alpha(N,\hat{p})$
such that
\begin{equation}
\left\Vert Du_{\varepsilon}\right\Vert _{C^{\alpha}(B_{1/2})}\leq C(N,\hat{p},\left\Vert f\right\Vert _{L^{\infty}(B_{1})},\left\Vert u\right\Vert _{L^{\infty}(B_{1})})\label{eq:main 2}
\end{equation}
and by the Lipschitz estimate \ref{thm:Lipschitz estimate} also
\[
\left\Vert Du_{\varepsilon}\right\Vert _{L^{\infty}(B_{1/2})}\leq C(N,\hat{p},\left\Vert f\right\Vert _{L^{\infty}(B_{1})},\left\Vert u\right\Vert _{L^{\infty}(B_{1})}).
\]
Therefore by the Ascoli-Arzela theorem there exists a subsequence
such that $Du_{\varepsilon}\rightarrow\eta$ uniformly in $B_{1/2}$,
where the function $\eta:B_{1/2}\rightarrow\mathbb{R}^{N}$ satisfies
\[
\left\Vert \eta\right\Vert _{C^{\alpha}(B_{1/2})}\leq C(N,\hat{p},\left\Vert f\right\Vert _{L^{\infty}(B_{1})},\left\Vert u\right\Vert _{L^{\infty}(B_{1})}).
\]
Using the mean value theorem and the estimate (\ref{eq:main 2}),
we deduce for all $x,y\in B_{1/2}$
\begin{align*}
 & \left|u(y)-u(x)-(y-x)\cdot\eta(x)\right|\\
 & \ \leq\left|u_{\varepsilon}(x)-u_{\varepsilon}(y)-(y-x)\cdot Du_{\varepsilon}(x)\right|\\
 & \ \ \ \ +\left|u(y)-u_{\varepsilon}(y)-u(x)+u_{\varepsilon}(x)\right|+\left|x-y\right|\left|\eta(x)-Du_{\varepsilon}(x)\right|\\
 & \leq C(N,\hat{p},\left\Vert u\right\Vert _{L^{\infty}(B_{1})})\left|x-y\right|^{1+\alpha}+o(\varepsilon)/\varepsilon.
\end{align*}
Letting $\varepsilon\rightarrow0$, this implies that $Du(x)=\eta(x)$
for all $x\in B_{1/2}$.
\end{proof}

\appendix

\section{Lipschitz estimate}

In this section we apply the method of Ishii and Lions \cite{ishiiLions90}
to prove a Lipschitz estimate for solutions to the inhomogeneous normalized
$p(x)$-Laplace equation and its regularized or perturbed versions.
We need the following vector inequality.
\begin{lem}
\label{lem:lipschitz lemma}Let $a,b\in\mathbb{R}^{N}\setminus\left\{ 0\right\} $
with $a\not=b$ and $\varepsilon\geq0$. Then
\[
\left|\frac{a}{\sqrt{\left|a\right|^{2}+\varepsilon^{2}}}-\frac{b}{\sqrt{\left|b\right|^{2}+\varepsilon^{2}}}\right|\leq\frac{2}{\max\left(\left|a\right|,\left|b\right|\right)}\left|a-b\right|.
\]
\end{lem}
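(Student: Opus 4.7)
\medskip
\noindent\textbf{Proof plan for Lemma \ref{lem:lipschitz lemma}.}
Without loss of generality I assume $|a|\geq|b|$, so that $\max(|a|,|b|)=|a|$ and it suffices to prove $\left|\tfrac{a}{\sqrt{|a|^{2}+\varepsilon^{2}}}-\tfrac{b}{\sqrt{|b|^{2}+\varepsilon^{2}}}\right|\leq \tfrac{2}{|a|}|a-b|$. The natural tactic is the telescoping decomposition
\[
\frac{a}{\sqrt{|a|^{2}+\varepsilon^{2}}}-\frac{b}{\sqrt{|b|^{2}+\varepsilon^{2}}}
=\frac{a-b}{\sqrt{|a|^{2}+\varepsilon^{2}}}
+b\left(\frac{1}{\sqrt{|a|^{2}+\varepsilon^{2}}}-\frac{1}{\sqrt{|b|^{2}+\varepsilon^{2}}}\right),
\]
which separates the ``direction change'' from the ``length change'' and lets me estimate each piece against $|a-b|/|a|$ separately.

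The first term is immediate: since $\sqrt{|a|^{2}+\varepsilon^{2}}\geq|a|$, it is bounded by $|a-b|/|a|$. For the second term I first note that the map $t\mapsto\sqrt{t^{2}+\varepsilon^{2}}$ on $[0,\infty)$ has derivative $t/\sqrt{t^{2}+\varepsilon^{2}}\in[0,1]$, and is therefore $1$-Lipschitz, giving
\[
\bigl|\sqrt{|a|^{2}+\varepsilon^{2}}-\sqrt{|b|^{2}+\varepsilon^{2}}\bigr|\leq\bigl||a|-|b|\bigr|\leq|a-b|.
\]
Combining this with $\sqrt{|a|^{2}+\varepsilon^{2}}\geq|a|$ and $\sqrt{|b|^{2}+\varepsilon^{2}}\geq|b|$ yields
\[
\left|b\left(\frac{1}{\sqrt{|a|^{2}+\varepsilon^{2}}}-\frac{1}{\sqrt{|b|^{2}+\varepsilon^{2}}}\right)\right|
\leq \frac{|b|\,|a-b|}{\sqrt{|a|^{2}+\varepsilon^{2}}\sqrt{|b|^{2}+\varepsilon^{2}}}
\leq \frac{|b|\,|a-b|}{|a|\,|b|}=\frac{|a-b|}{|a|}.
\]
Adding the two bounds gives the desired $2|a-b|/|a|=2|a-b|/\max(|a|,|b|)$.

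The proof has no genuine obstacle; the only subtle point is picking the decomposition so that the vector $b$ (of smaller norm) multiplies the scalar difference, ensuring the factor $|b|$ cancels against $\sqrt{|b|^{2}+\varepsilon^{2}}$ in the denominator and avoiding an $\varepsilon$-dependent blow-up when $b\to 0$.
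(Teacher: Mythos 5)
Your proof is correct and follows essentially the same route as the paper: your telescoping split is algebraically identical to the paper's decomposition $\frac{a}{s_1}-\frac{b}{s_2}=\frac{1}{s_1}\bigl(a-b+\frac{b}{s_2}(s_2-s_1)\bigr)$, and both arguments reduce to the bound $\bigl|\sqrt{|a|^{2}+\varepsilon^{2}}-\sqrt{|b|^{2}+\varepsilon^{2}}\bigr|\leq|a-b|$. The only cosmetic difference is that you obtain this last inequality from the $1$-Lipschitz property of $t\mapsto\sqrt{t^{2}+\varepsilon^{2}}$, whereas the paper rationalizes the difference of square roots; both are fine.
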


\begin{proof}
We may suppose that $\left|a\right|=\max(\left|a\right|,\left|b\right|)$.
Let $s_{1}:=\sqrt{\left|a\right|^{2}+\varepsilon^{2}}$ and $s_{2}:=\sqrt{\left|b\right|^{2}+\varepsilon^{2}}$.
Then
\begin{align*}
\left|\frac{a}{s_{1}}-\frac{b}{s_{2}}\right|=\frac{1}{s_{1}}\left|a-b+\frac{b}{s_{2}}(s_{2}-s_{1})\right| & \leq\frac{1}{s_{1}}(\left|a-b\right|+\frac{\left|b\right|}{s_{2}}\left|s_{2}-s_{1}\right|)\\
 & \leq\frac{1}{\left|a\right|}(\left|a-b\right|+\left|s_{2}-s_{1}\right|).
\end{align*}
Moreover
\begin{align*}
\left|s_{2}-s_{1}\right| & =\left|\sqrt{\left|a\right|^{2}+\varepsilon^{2}}-\sqrt{\left|b\right|^{2}+\varepsilon^{2}}\right|=\frac{\left|\left|a\right|^{2}-\left|b\right|^{2}\right|}{\sqrt{\left|a\right|^{2}+\varepsilon^{2}}+\sqrt{\left|b\right|^{2}+\varepsilon^{2}}}\\
 & \leq\frac{(\left|a\right|+\left|b\right|)\left|\left|a\right|-\left|b\right|\right|}{\left|a\right|+\left|b\right|}\leq\left|a-b\right|\qedhere.
\end{align*}
\end{proof}
$ $
\begin{thm}[Lipschitz estimate]
\label{thm:Lipschitz estimate} Suppose that $p:B_{1}\rightarrow\mathbb{R}$
is Lipschitz continuous, $p_{\min}>1$ and that $f\in C(B_{1})$ is
bounded. Let $u$ be a viscosity solution to
\[
-\Delta u-(p(x)-2)\frac{\left\langle D^{2}u(Du+q),Du+q\right\rangle }{\left|Du+q\right|^{2}+\varepsilon^{2}}=f(x)\quad\text{in }B_{1},
\]
where $\varepsilon\geq0$ and $q\in\mathbb{R}^{N}$. Then there are
constants $C_{0}(N,\hat{p},\left\Vert u\right\Vert _{L^{\infty}(B_{1})},\left\Vert f\right\Vert _{L^{\infty}(B_{1})})$
and $\nu_{0}(N,\hat{p})$ such that if $\left|q\right|>\nu_{0}$ or
$\left|q\right|=0$, then we have
\[
\left|u(x)-u(y)\right|\leq C_{0}\left|x-y\right|\quad\text{for all }x,y\in B_{1/2}.
\]
\end{thm}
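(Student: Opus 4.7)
The plan is to apply the Ishii--Lions doubling-of-variables method cited in the preamble of the appendix. Fix $x_{0}\in B_{1/2}$ and consider the auxiliary function
\[
\Phi(x,y):=u(x)-u(y)-L\,\omega(|x-y|)-\tfrac{M}{2}\bigl(|x-x_{0}|^{2}+|y-x_{0}|^{2}\bigr)
\]
on $\overline{B}_{3/4}\times\overline{B}_{3/4}$, where $\omega(t)=t-\omega_{0}t^{5/4}$ is a concave modulus on a small interval, $M$ is chosen sufficiently large in terms of $\|u\|_{L^{\infty}(B_{1})}$ so that the maximum of $\Phi$ is attained at an interior point, and $L$ is the free parameter that encodes the Lipschitz bound to be proved. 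Arguing by contradiction, assume that $\sup\Phi>0$; then the maximum is attained at some $(\bar{x},\bar{y})$ with $\bar{x}\neq\bar{y}$ and both points in $B_{3/4}$, and one automatically has $L\omega(|\bar{x}-\bar{y}|)\leq 2\|u\|_{L^{\infty}(B_{1})}$.

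Next, applying the Crandall--Ishii theorem of sums at $(\bar{x},\bar{y})$ produces matrices $X,Y\in S^{N}$ and jets $(\eta_{1},X)\in\overline{J}^{2,+}u(\bar{x})$, $(\eta_{2},Y)\in\overline{J}^{2,-}u(\bar{y})$, where
\[
\eta_{1}=L\omega'(|\bar{x}-\bar{y}|)\hat{e}+M(\bar{x}-x_{0}),\qquad \eta_{2}=L\omega'(|\bar{x}-\bar{y}|)\hat{e}-M(\bar{y}-x_{0}),
\]
with $\hat{e}=(\bar{x}-\bar{y})/|\bar{x}-\bar{y}|$. A careful tuning of the doubling parameter makes the resulting matrix inequality force a strictly negative eigenvalue of $Y-X$ in the direction $\hat{e}$ of size $\simeq -L|\bar{x}-\bar{y}|^{1/4}$, while the remaining eigenvalues are controlled by $M$. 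Substituting these jets into the viscosity sub- and supersolution inequalities gives
\[
-\tr\bigl(A(\bar{x},\eta_{1}+q)X\bigr)\leq f(\bar{x}),\qquad -\tr\bigl(A(\bar{y},\eta_{2}+q)Y\bigr)\geq f(\bar{y}),
\]
where $A(x,\zeta)=I+(p(x)-2)\tfrac{\zeta\otimes\zeta}{|\zeta|^{2}+\varepsilon^{2}}$ is uniformly elliptic with eigenvalues bounded in terms of $\hat{p}$.

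Subtracting the two viscosity inequalities and decomposing the trace difference as
\[
\tr\bigl(A(\bar{y},\eta_{2}+q)(Y-X)\bigr)+\tr\bigl((A(\bar{x},\eta_{1}+q)-A(\bar{y},\eta_{2}+q))X\bigr)\geq f(\bar{y})-f(\bar{x}),
\]
uniform ellipticity combined with the eigenvalue structure of $Y-X$ forces the first trace to be at most $-c(\hat{p})L|\bar{x}-\bar{y}|^{1/4}$, whereas Lemma \ref{lem:lipschitz lemma}, the Lipschitz continuity of $p$, and the Crandall--Ishii bound on $\|X\|$ show that the second trace is of order $C(\hat{p},M)(|\bar{x}-\bar{y}|+L^{-1})\|X\|$, provided $|\eta_{1}+q|$ and $|\eta_{2}+q|$ are bounded below. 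The case hypothesis $|q|=0$ or $|q|>\nu_{0}$ is precisely what guarantees this lower bound: in the first case we choose $L\geq \nu_{0}$ so that $|\eta_{i}+q|\gtrsim L$ because $\omega'\approx 1$, while in the second case $|q|$ itself dominates both $L\omega'$ and the $M$-penalty terms when $\nu_{0}$ is chosen large enough relative to them. Taking $L$ large in terms of $N,\hat{p},M,\|f\|_{L^{\infty}(B_{1})},\|u\|_{L^{\infty}(B_{1})}$ contradicts $\sup\Phi>0$, and unwinding yields the claimed estimate on $B_{1/2}$.

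The main obstacle will be the simultaneous treatment of the two regimes $q=0$ and $|q|>\nu_{0}$: one must verify that a single choice of $L$ works regardless of whether the large effective gradient comes from $L$ itself or from $q$, and one must absorb the genuine $x$-dependence of $A$ (through Lipschitz $p$) together with the $\eta$-dependence (through Lemma \ref{lem:lipschitz lemma}) into the negative contribution from the eigenvalue of $Y-X$. The intermediate range $0<|q|\leq\nu_{0}$ must be excluded because there $\eta_{i}+q$ could vanish and the coefficient matrix $A$ would degenerate, which is exactly why the statement is phrased with the dichotomy on $|q|$.
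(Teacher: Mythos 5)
Your overall strategy (Ishii--Lions doubling with a concave modulus and a localization penalty, then jets via the theorem of sums, then splitting the trace difference) is the same as the paper's, but there is a genuine gap in how you close the absorption. The dangerous term is the one coming from the change of direction of the normalized gradient, $\tr\bigl((A(\bar{x},\overline{\eta}_{1})-A(\bar{x},\overline{\eta}_{2}))X\bigr)$. By Lemma \ref{lem:lipschitz lemma} this is of size $\frac{|a-b|}{L}\left\Vert X\right\Vert$, where $a-b$ consists only of the penalty gradients $M(\bar{x}-x_{0})$, $M(\bar{y}-x_{0})$, and $\left\Vert X\right\Vert\lesssim L\omega'(|z|)/|z|$ with $z=\bar{x}-\bar{y}$. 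With only the crude bound $|\bar{x}-x_{0}|,|\bar{y}-x_{0}|\lesssim M^{-1/2}$ that follows from positivity of the maximum, this term is of order $\sqrt{M}\,|z|^{-1}$, while the good concavity term with your modulus $\omega(t)=t-\omega_{0}t^{5/4}$ is of order $-L|z|^{-3/4}$ (note also your stated size $-L|z|^{1/4}$ has the wrong sign in the exponent). Since $|z|$ at the maximum point can be arbitrarily small (all you know is $L\omega(|z|)\leq 2\left\Vert u\right\Vert_{L^{\infty}}$, so in fact $|z|\lesssim 1/L$ and possibly much smaller), $\sqrt{M}|z|^{-1}$ eventually dominates $L|z|^{-3/4}$ no matter how large you take $L$; the same failure occurs for any fixed exponent $\gamma\in(1,2)$ in the modulus. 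So the claimed bound ``second trace $\lesssim C(\hat{p},M)(|z|+L^{-1})\left\Vert X\right\Vert$, absorbed by taking $L$ large'' does not close the argument.

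The paper's proof fixes exactly this point by invoking the a priori H\"older estimate of Krylov--Safonov type (\cite[Proposition 4.10]{caffarelliCabre}): since $u\in C^{\beta}$ with constants depending only on $N,\hat{p},\left\Vert u\right\Vert_{L^{\infty}},\left\Vert f\right\Vert_{L^{\infty}}$, the positivity of the maximum yields the refined bounds $M|\hat{x}-x_{0}|,\,M|\hat{y}-y_{0}|\leq C_{0}|z|^{\beta/2}$, so that $|\overline{\eta}_{1}-\overline{\eta}_{2}|\lesssim \frac{C_{0}}{L}|z|^{\beta/2}$ and the dangerous term becomes $\sqrt{M}\,\varphi'(|z|)\,|z|^{\beta/2-1}$. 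Choosing the modulus exponent $\gamma=1+\beta/2$ (rather than a fixed $5/4$) makes $\varphi''(|z|)\sim-|z|^{\beta/2-1}$ carry exactly the same power of $|z|$, so absorption only requires $L\gtrsim\sqrt{M}$, uniformly in $|z|$; the remaining terms of order $L$, $M$, $\left\Vert f\right\Vert_{L^{\infty}}$ are then killed by first fixing the localization radius $r$ small. Your proposal is missing both ingredients (the use of the a priori H\"older regularity at the maximum point and the coupling of the modulus exponent to $\beta$), and without them the contradiction cannot be reached in the regime where $|\bar{x}-\bar{y}|$ is very small. The dichotomy $|q|=0$ or $|q|>\nu_{0}$ you handle essentially as the paper does, though note that $\nu_{0}$ is naturally of size comparable to $L$, hence inherits dependence on $\left\Vert u\right\Vert_{L^{\infty}}$ and $\left\Vert f\right\Vert_{L^{\infty}}$ as well.
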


\begin{proof}
We let $r(N,\hat{p})\in(0,1/2)$ denote a small constant that will
be specified later. Let $x_{0},y_{0}\in B_{r/2}$ and define the function
\[
\Psi(x,y):=u(x)-u(y)-L\varphi(\left|x-y\right|)-\frac{M}{2}\left|x-x_{0}\right|^{2}-\frac{M}{2}\left|y-y_{0}\right|^{2},
\]
where $\varphi:[0,2]\rightarrow\mathbb{R}$ is given by
\[
\varphi(s):=s-s^{\gamma}\kappa_{0},\quad\kappa_{0}:=\frac{1}{\gamma2^{\gamma+1}},
\]
and the constants $L(N,\hat{p},\left\Vert u\right\Vert _{L^{\infty}(B_{1})}),M(N,\hat{p},\left\Vert u\right\Vert _{L^{\infty}(B_{1})})>0$
and $\gamma(N,\hat{p})\in(1,2)$ are also specified later. Our objective
is to show that for a suitable choice of these constants, the function
$\Psi$ is non-positive in $\overline{B_{r}}\times\overline{B_{r}}$.
By the definition of $\varphi$, this yields $u(x_{0})-u(y_{0})\leq L\left|x_{0}-y_{0}\right|$
which implies that $u$ is $L$-Lipschitz in $B_{r}$. The claim of
the theorem then follows by standard translation arguments.

Suppose on contrary that $\Psi$ has a positive maximum at some point
$(\hat{x},\hat{y})\in\overline{B_{r}}\times\overline{B_{r}}$. Then
$\hat{x}\not=\hat{y}$ since otherwise the maximum would be non-positive.
We have
\begin{align}
0 & <u(\hat{x})-u(\hat{y})-L\varphi(\left|\hat{x}-\hat{y}\right|)-\frac{M}{2}\left|\hat{x}-x_{0}\right|^{2}-\frac{M}{2}\left|\hat{y}-y_{0}\right|^{2}\nonumber \\
 & \leq\left|u(\hat{x})-u(\hat{y})\right|-\frac{M}{2}\left|\hat{x}-x_{0}\right|^{2}.\label{eq:lipschitz est 1}
\end{align}
Therefore, by taking 
\begin{equation}
M:=\frac{8\osc_{B_{1}}u}{r^{2}},\label{eq:lipschitz est M}
\end{equation}
 we get
\[
\left|\hat{x}-x_{0}\right|\leq\sqrt{\frac{2}{M}\left|u(\hat{x})-u(\hat{y})\right|}\leq r/2
\]
and similarly $\left|\hat{y}-y_{0}\right|\leq r/2$. Since $x_{0},y_{0}\in B_{r/2}$,
this implies that $\hat{x},\hat{y}\in B_{r}$. 

By \cite[Proposition 4.10]{caffarelliCabre} there exist constants
$C^{\prime}(N,\hat{p},\left\Vert u\right\Vert _{L^{\infty}(B_{1})},\left\Vert f\right\Vert _{L^{\infty}(B_{1})})$
and $\beta(N,\hat{p})\in(0,1)$ such that 
\begin{equation}
\left|u(x)-u(y)\right|\leq C^{\prime}\left|x-y\right|^{\beta}\quad\text{for all }x,y\in B_{r}.\label{eq:lipschitz est 2}
\end{equation}
It follows from (\ref{eq:lipschitz est 1}) and (\ref{eq:lipschitz est 2})
that for $C_{0}:=\sqrt{2C^{\prime}}\sqrt{M}$ we have
\begin{align}
M\left|\hat{x}-x_{0}\right|\leq C_{0}\left|\hat{x}-\hat{y}\right|^{\beta/2},\nonumber \\
M\left|\hat{y}-y_{0}\right|\leq C_{0}\left|\hat{x}-\hat{y}\right|^{\beta/2}.\label{eq:lipschitz est 3}
\end{align}
Since $\hat{x}\not=\hat{y}$, the function $(x,y)\mapsto\varphi(\left|x-y\right|)$
is $C^{2}$ in a neighborhood of $(\hat{x},\hat{y})$ and we may invoke
the Theorem of sums \cite[Theorem 3.2]{userguide}. For any $\mu>0$
there exist matrices $X,Y\in S^{N}$ such that
\begin{align*}
(D_{x}(L\varphi(\left|x-y\right|))(\hat{x},\hat{y}),X) & \in\overline{J}^{2,+}(u-\frac{M}{2}\left|x-x_{0}\right|^{2})(\hat{x}),\\
(-D_{y}(L\varphi(\left|x-y\right|))(\hat{x},\hat{y}),Y) & \in\overline{J}^{2,-}(u+\frac{M}{2}\left|y-y_{0}\right|^{2})(\hat{y}),
\end{align*}
which by denoting $z:=\hat{x}-\hat{y}$ and
\begin{align*}
a & :=L\varphi^{\prime}(\left|z\right|)\frac{z}{\left|z\right|}+M(\hat{x}-x_{0}),\\
b & :=L\varphi^{\prime}(\left|z\right|)\frac{z}{\left|z\right|}-M(\hat{y}-y_{0}),
\end{align*}
 can be written as
\begin{equation}
(a,X+MI)\in\overline{J}^{2,+}u(\hat{x}),\quad(b,Y-MI)\in\overline{J}^{2,-}u(\hat{y}).\label{eq:lipschitz est 6}
\end{equation}
By assuming that $L$ is large enough depending on $C_{0}$, we have
by (\ref{eq:lipschitz est 3}) and the fact $\varphi^{\prime}\in\left[\frac{3}{4},1\right]$
\begin{align}
\left|a\right|,\left|b\right| & \leq L\left|\varphi^{\prime}(\left|\hat{x}-\hat{y}\right|)\right|+C_{0}\left|\hat{x}-\hat{y}\right|^{\beta/2}\leq2L,\label{eq:lipschitz est 4}\\
\left|a\right|,\left|b\right| & \geq L\left|\varphi^{\prime}(\left|\hat{x}-\hat{y}\right|)\right|-C_{0}\left|\hat{x}-\hat{y}\right|^{\beta/2}\geq\frac{1}{2}L.\label{eq:lipschitz est 5}
\end{align}
Moreover, we have
\begin{align}
-(\mu+2\left\Vert B\right\Vert )\begin{pmatrix}I & 0\\
0 & I
\end{pmatrix} & \leq\begin{pmatrix}X & 0\\
0 & -Y
\end{pmatrix}\nonumber \\
 & \leq\begin{pmatrix}B & -B\\
-B & B
\end{pmatrix}+\frac{2}{\mu}\begin{pmatrix}B^{2} & -B^{2}\\
-B^{2} & B^{2}
\end{pmatrix},\label{eq:lipschitz est 7}
\end{align}
where
\begin{align*}
B & =L\varphi^{\prime\prime}(\left|z\right|)\frac{z}{\left|z\right|}\otimes\frac{z}{\left|z\right|}+\frac{L\varphi^{\prime}(\left|z\right|)}{\left|z\right|}\left(I-\frac{z}{\left|z\right|}\otimes\frac{z}{\left|z\right|}\right),\\
B^{2} & =BB=L^{2}(\varphi^{\prime\prime}(\left|z\right|))^{2}\frac{z}{\left|z\right|}\otimes\frac{z}{\left|z\right|}+\frac{L^{2}(\varphi^{\prime}(\left|z\right|))^{2}}{\left|z\right|^{2}}\left(I-\frac{z}{\left|z\right|}\otimes\frac{z}{\left|z\right|}\right).
\end{align*}
Using that $\varphi^{\prime\prime}(\left|z\right|)<0<\varphi^{\prime}(\left|z\right|)$
and $\left|\varphi^{\prime\prime}(\left|z\right|)\right|\leq\varphi^{\prime}(\left|z\right|)/\left|z\right|$,
we deduce that
\begin{equation}
\left\Vert B\right\Vert \leq\frac{L\varphi^{\prime}(\left|z\right|)}{\left|z\right|}\quad\text{and}\quad\text{\ensuremath{\left\Vert B^{2}\right\Vert \leq\frac{L^{2}(\varphi^{\prime}(\left|z\right|))^{2}}{\left|z\right|^{2}}}}.\label{eq:lipschitz b est}
\end{equation}
Moreover, choosing
\[
\mu:=4L\left(\left|\varphi^{\prime\prime}(\left|z\right|)\right|+\frac{\left|\varphi^{\prime}(\left|z\right|)\right|}{\left|z\right|}\right),
\]
and using that $\varphi^{\prime\prime}(\left|z\right|)<0$, we have
\begin{align}
\left\langle B\frac{z}{\left|z\right|},\frac{z}{\left|z\right|}\right\rangle +\frac{2}{\mu}\left\langle B^{2}\frac{z}{\left|z\right|},\frac{z}{\left|z\right|}\right\rangle =L\varphi^{\prime\prime}(\left|z\right|)+\frac{2}{\mu}L^{2}\left|\varphi^{\prime\prime}(\left|z\right|)\right| & \leq\frac{L}{2}\varphi^{\prime\prime}(\left|z\right|).\label{eq:lipschitz est 8}
\end{align}
We set $\eta_{1}:=a+q$ and $\eta_{2}:=b+q$. By (\ref{eq:lipschitz est 4})
and (\ref{eq:lipschitz est 5}) there is a constant $\nu_{0}(L)$
such that if $\left|q\right|=0$ or $\left|q\right|>\nu_{0}$, then
\begin{equation}
\left|\eta_{1}\right|,\left|\eta_{2}\right|\geq\frac{L}{2}.\label{eq:lipschitz est 9}
\end{equation}
We denote $A(x,\eta):=I+(p(x)-2)\eta\otimes\eta$ and $\overline{\eta}:=\frac{\eta}{\sqrt{\left|\eta\right|^{2}+\varepsilon^{2}}}$.
Since $u$ is a viscosity solution, we obtain from (\ref{eq:lipschitz est 6})
\begin{align}
0 & \leq\tr(A(\hat{x},\overline{\eta}_{1})(X+MI))-\tr(A(\hat{y},\overline{\eta}_{2})(Y-MI))+f(\hat{x})-f(\hat{y})\nonumber \\
 & =\tr(A(\hat{y},\overline{\eta}_{2})(X-Y))+\tr((A(\hat{x},\overline{\eta}_{2})-A(\hat{y},\overline{\eta}_{2}))X)\nonumber \\
 & \ \ \ +\tr((A(\hat{x},\overline{\eta}_{1})-A(\hat{x},\overline{\eta}_{2}))X)+M\tr(A(\hat{x},\overline{\eta}_{1})+A(\hat{y},\overline{\eta}_{2}))\nonumber \\
 & \ \ \ +f(\hat{x})-f(\hat{y})\nonumber \\
 & =:T_{1}+T_{2}+T_{3}+T_{4}+T_{5}.\label{eq:lipschitz est 11}
\end{align}
We will now proceed to estimate these terms. The plan is to obtain
a contradiction by absorbing the other terms into $T_{1}$ which is
negative by concavity of $\varphi$.

\textbf{Estimate of $T_{1}$: }Multiplying (\ref{eq:lipschitz est 7})
by the vector $(\frac{z}{\left|z\right|},-\frac{z}{\left|z\right|})$
and using (\ref{eq:lipschitz est 8}), we obtain an estimate for the
smallest eigenvalue of $X-Y$
\begin{align*}
\lambda_{\min}(X-Y) & \leq\left\langle (X-Y)\frac{z}{\left|z\right|},\frac{z}{\left|z\right|}\right\rangle \\
 & \leq4\left\langle B\frac{z}{\left|z\right|},\frac{z}{\left|z\right|}\right\rangle +\frac{8}{\mu}\left\langle B^{2}\frac{z}{\left|z\right|},\frac{z}{\left|z\right|}\right\rangle \leq2L\varphi^{\prime\prime}(\left|z\right|).
\end{align*}
The eigenvalues of $A(\hat{y},\overline{\eta}_{2})$ are between $\min(1,\pmin-1)$
and $\max(1,p_{\max}-1)$. Therefore by \cite{theobald75}
\begin{align*}
T_{1}=\tr(A(\hat{y},\overline{\eta}_{2})(X-Y)) & \leq\sum_{i}\lambda_{i}(A(\hat{y},\overline{\eta}_{2}))\lambda_{i}(X-Y)\\
 & \leq\min(1,\pmin-1)\lambda_{\min}(X-Y)\\
 & \leq C(\hat{p})L\varphi^{\prime\prime}(\left|z\right|).
\end{align*}

\textbf{Estimate of $T_{2}$: }We have
\[
T_{2}=\tr((A(\hat{x},\overline{\eta}_{2})-A(\hat{y},\overline{\eta}_{2}))X)\leq\left|p(\hat{x})-p(\hat{y})\right|\left|\left\langle X\overline{\eta}_{2},\overline{\eta}_{2}\right\rangle \right|\leq C(\hat{p})\left|z\right|\left\Vert X\right\Vert ,
\]
where by (\ref{eq:lipschitz est 7}) and (\ref{eq:lipschitz b est})
\begin{align}
\left\Vert X\right\Vert \leq\left\Vert B\right\Vert +\frac{2}{\mu}\left\Vert B\right\Vert ^{2} & \leq\frac{L\left|\varphi^{\prime}(\left|z\right|)\right|}{\left|z\right|}+\frac{2L^{2}(\varphi^{\prime}(\left|z\right|))^{2}}{4L(\left|\varphi^{\prime\prime}(\left|z\right|)\right|+\frac{\left|\varphi^{\prime}(\left|z\right|)\right|}{\left|z\right|})\left|z\right|^{2}}\nonumber \\
 & \leq\frac{2L\varphi^{\prime}(\left|z\right|)}{\left|z\right|}.\label{eq:lipschitz est 12}
\end{align}

\textbf{Estimate of $T_{3}$: }From Lemma \ref{lem:lipschitz lemma}
and the estimate (\ref{eq:lipschitz est 9}) it follows that
\begin{align}
\left|\overline{\eta}_{1}-\overline{\eta}_{2}\right| & \leq\frac{2\left|\eta_{1}-\eta_{2}\right|}{\max(\left|\eta_{1}\right|,\left|\eta_{2}\right|)}\leq\frac{4}{L}\left|\eta_{1}-\eta_{2}\right|=\frac{4}{L}\left|a-b\right|\nonumber \\
 & \leq\frac{4}{L}(M\left|\hat{x}-x_{0}\right|+M\left|\hat{y}-y_{0}\right|)\leq\frac{8C_{0}}{L}\left|z\right|^{\beta/2},\label{eq:lipschitz est 10}
\end{align}
where in the last inequality we used (\ref{eq:lipschitz est 3}).
Observe that
\[
\left\Vert \overline{\eta}_{1}\otimes\overline{\eta}_{1}-\overline{\eta}_{2}\otimes\overline{\eta}_{2}\right\Vert =\left\Vert (\overline{\eta}_{1}-\overline{\eta}_{2})\otimes\overline{\eta}_{1}-\overline{\eta}_{2}\otimes(\overline{\eta}_{2}-\overline{\eta}_{1})\right\Vert \leq(\left|\overline{\eta}_{1}\right|+\left|\overline{\eta}_{2}\right|)\left|\overline{\eta}_{1}-\overline{\eta}_{2}\right|.
\]
Using the last two displays, we obtain by \cite{theobald75} and (\ref{eq:lipschitz est 12})
\begin{align*}
T_{3}=\tr((A(\hat{x},\overline{\eta}_{1})-A(\hat{x},\overline{\eta}_{2}))X) & \leq N\left\Vert A(x_{1},\overline{\eta}_{1})-A(x_{1},\overline{\eta}_{2})\right\Vert \left\Vert X\right\Vert \\
 & \leq N\left|p(x_{1})-2\right|(\left|\overline{\eta}_{1}\right|+\left|\overline{\eta}_{2}\right|)\left|\overline{\eta}_{1}-\overline{\eta}_{2}\right|\left\Vert X\right\Vert \\
 & \leq\frac{C(N,\hat{p})C_{0}}{L}\left|z\right|^{\beta/2}\left\Vert X\right\Vert \\
 & \leq C(N,\hat{p},\left\Vert u\right\Vert _{L^{\infty}},\left\Vert f\right\Vert _{L^{\infty}})\sqrt{M}\varphi^{\prime}(\left|z\right|)\left|z\right|^{\beta/2-1}.
\end{align*}

\textbf{Estimate of $T_{4}$ and $T_{5}$: }By Lipschitz continuity
of $p$ we have
\begin{align*}
T_{4} & =M\tr(A(\hat{x},\overline{\eta}_{1})+A(\hat{y},\overline{\eta}_{2}))\leq2MC(N,\hat{p}).
\end{align*}
We have also
\[
T_{5}=f(\hat{x})-f(\hat{y})\leq2\left\Vert f\right\Vert _{L^{\infty}(B_{1})}.
\]

Combining the estimates, we deduce the existence of positive constants
$C_{1}(N,\hat{p})$ and $C_{2}(N,\hat{p},\left\Vert u\right\Vert _{L^{\infty}(B_{1})},\left\Vert f\right\Vert _{L^{\infty}(B_{1})})$
such that
\begin{align}
0 & \leq C_{1}L\varphi^{\prime\prime}(\left|z\right|)+C_{2}\big(L\varphi^{\prime}(\left|z\right|)+\sqrt{M}\varphi^{\prime}(\left|z\right|)\left|z\right|^{\frac{\beta}{2}-1}+M+1\big)\nonumber \\
 & \leq C_{1}L\varphi^{\prime\prime}(\left|z\right|)+C_{2}(L+\sqrt{M}\left|z\right|^{\frac{\beta}{2}-1}+M+1)\label{eq:lipschitz est 15}
\end{align}
where we used that $\varphi^{\prime}(\left|z\right|)\in[\frac{3}{4},1]$.
We take $\gamma:=\frac{\beta}{2}+1$ so that we have
\[
\varphi^{\prime\prime}(\left|z\right|)=\frac{1-\gamma}{2^{\gamma+1}}\left|z\right|^{\gamma-2}=\frac{-\beta}{2^{\frac{\beta}{2}+3}}\left|z\right|^{\frac{\beta}{2}-1}=:-C_{3}\left|z\right|^{\frac{\beta}{2}-1}.
\]
We apply this to (\ref{eq:lipschitz est 15}) and obtain
\begin{align}
0 & \leq(C_{2}\sqrt{M}-C_{1}C_{3}L)\left|z\right|^{\frac{\beta}{2}-1}+C_{2}(L+M+1)\label{eq:lipschitz est 155}
\end{align}
We fix $r:=\frac{1}{2}\left(\frac{6C_{2}}{C_{1}C_{3}}\right)^{\frac{1}{\frac{\beta}{2}-1}}$.
By (\ref{eq:lipschitz est M}) this will also fix $M=(N,\hat{p},\left\Vert u\right\Vert _{L^{\infty}(B_{1})})$.
We take $L$ so large that 
\[
L>\max(\frac{2C_{2}\sqrt{M}}{C_{1}C_{3}},M+1).
\]
Then by (\ref{eq:lipschitz est 155}) we have
\begin{align*}
0<-\frac{1}{2}C_{1}C_{3}L\left|z\right|^{\frac{\beta}{2}-1}+2C_{2}L & \leq L(-\frac{1}{2}C_{1}C_{3}(2r)^{\frac{\beta}{2}-1}+2C_{2})\\
 & =-LC_{2}\leq0,
\end{align*}
which is a contradiction.
\end{proof}

\section{Stability and comparison principles}
\begin{lem}
Suppose that $p\in C(B_{1})$, $p_{\min}>1$ and that $f:B_{1}\times\mathbb{R}\rightarrow\mathbb{R}$
is continuous. Let $u_{\varepsilon}$ be a viscosity solution to
\[
-\Delta u_{\varepsilon}-(p_{\varepsilon}(x)-2)\frac{\left\langle D^{2}u_{\varepsilon}Du_{\varepsilon},Du_{\varepsilon}\right\rangle }{\left|Du_{\varepsilon}\right|^{2}+\varepsilon^{2}}=f_{\varepsilon}(x,u(x))\quad\text{in }B_{1}
\]
and assume that $u_{\varepsilon}\rightarrow u\in C(B_{1})$, $p_{\varepsilon}\rightarrow p$
and $f_{\varepsilon}\rightarrow f$ locally uniformly as $\varepsilon\rightarrow0$.
Then $u$ is a viscosity solution to
\[
-\Delta u-(p(x)-2)\frac{\left\langle D^{2}uDu,Du\right\rangle }{\left|Du\right|^{2}}=f(x,u(x))\quad\text{in }B_{1}.
\]
\end{lem}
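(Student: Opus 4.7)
The plan is to show that $u$ is both a viscosity supersolution and a viscosity subsolution; the two arguments are symmetric, so I will only sketch the supersolution half. Let $\varphi\in C^2$ touch $u$ from below at some $x_0\in B_1$. By subtracting $|x-x_0|^4$ from $\varphi$ if necessary, I may assume the touching is strict on a closed ball $\overline{B}_r(x_0)\subset B_1$, without changing $D\varphi(x_0)$ or $D^2\varphi(x_0)$.

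Because $u_\varepsilon\to u$ uniformly on $\overline{B}_r(x_0)$ and the touching is strict, the standard argument gives a sequence $x_\varepsilon\to x_0$ such that $u_\varepsilon-\varphi$ attains a local minimum at $x_\varepsilon$, so $\varphi$ (up to an additive constant) touches $u_\varepsilon$ from below at $x_\varepsilon$. The regularized equation is non-singular (the denominator $|D\varphi|^2+\varepsilon^2$ never vanishes), so the viscosity supersolution inequality for $u_\varepsilon$ applies without any special case and yields
\[
-\Delta\varphi(x_\varepsilon)-(p_\varepsilon(x_\varepsilon)-2)\left\langle D^2\varphi(x_\varepsilon)\eta_\varepsilon,\eta_\varepsilon\right\rangle\geq f_\varepsilon(x_\varepsilon,u_\varepsilon(x_\varepsilon)),
\]
where I have set
\[
\eta_\varepsilon:=\frac{D\varphi(x_\varepsilon)}{\sqrt{|D\varphi(x_\varepsilon)|^2+\varepsilon^2}},\qquad |\eta_\varepsilon|\leq 1.
\]

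Since $|\eta_\varepsilon|\leq 1$, I can extract a subsequence with $\eta_\varepsilon\to\eta\in\overline{B}_1$. Using $x_\varepsilon\to x_0$, the local uniform convergence of $p_\varepsilon$ and $f_\varepsilon$, the continuity of $D^2\varphi$, and the convergence $u_\varepsilon(x_\varepsilon)\to u(x_0)$, I can pass to the limit to obtain
\[
-\Delta\varphi(x_0)-(p(x_0)-2)\left\langle D^2\varphi(x_0)\eta,\eta\right\rangle\geq f(x_0,u(x_0)).
\]
If $D\varphi(x_0)\neq 0$, then $\eta_\varepsilon\to D\varphi(x_0)/|D\varphi(x_0)|$, so the limiting inequality is exactly $-\Delta^N_{p(x_0)}\varphi(x_0)\geq f(x_0,u(x_0))$. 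If instead $D\varphi(x_0)=0$, the limit $\eta$ lies in $\overline{B}_1$ and the inequality is precisely the one required in the singular branch of Definition \ref{def:viscosity solutions}.

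The main obstacle is purely bookkeeping around the singular case $D\varphi(x_0)=0$: one must carry the normalized direction $\eta_\varepsilon$ through the limit rather than trying to take limits of $D\varphi(x_\varepsilon)/|D\varphi(x_\varepsilon)|$, which may be ill-behaved. The relaxed vanishing-gradient clause in Definition \ref{def:viscosity solutions} (allowing any $\eta\in\overline{B}_1$) is tailored precisely to absorb this limit, so no further work is needed. The subsolution case is identical with inequalities reversed and $\varphi$ touching $u$ from above.
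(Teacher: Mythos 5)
Your proposal is correct and follows essentially the same route as the paper: pass the touching point to points $x_\varepsilon\to x_0$ where $u_\varepsilon-\varphi$ has a local minimum, write the non-singular regularized inequality in terms of $\eta_\varepsilon=D\varphi(x_\varepsilon)/\sqrt{|D\varphi(x_\varepsilon)|^2+\varepsilon^2}$, extract $\eta_\varepsilon\to\eta\in\overline{B}_1$, and pass to the limit, splitting into the cases $D\varphi(x_0)\neq0$ and $D\varphi(x_0)=0$ exactly as the relaxed definition requires. The only difference is cosmetic: you make explicit the strict-touching reduction and the convergence $u_\varepsilon(x_\varepsilon)\to u(x_0)$, which the paper leaves implicit.
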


\begin{proof}
It is enough to consider supersolutions. Suppose that $\varphi\in C^{2}$
touches $u$ from below at $x$. Since $u_{\varepsilon}\rightarrow u$
locally uniformly, there exists a sequence $x_{\varepsilon}\rightarrow x$
such that $u_{\varepsilon}-\varphi$ has a local minimum at $x_{\varepsilon}$.
We denote $\eta_{\varepsilon}:=D\varphi(x_{\varepsilon})/\sqrt{\left|D\varphi(x_{\varepsilon})\right|^{2}+\varepsilon^{2}}.$
Then $\eta_{\varepsilon}\rightarrow\eta\in\overline{B}_{1}$ up to
a subsequence. Therefore we have
\begin{align}
0 & \leq-\Delta\varphi(x_{\varepsilon})-(p_{\varepsilon}(x_{\varepsilon})-2)\left\langle D^{2}\varphi(x_{\varepsilon})\eta_{\varepsilon},\eta_{\varepsilon}\right\rangle -f_{\varepsilon}(x_{\varepsilon},u_{\varepsilon}(x_{\varepsilon}))\nonumber \\
 & \rightarrow-\Delta\varphi(x)-(p(x)-2)\left\langle D^{2}\varphi(x_{\varepsilon})\eta,\eta\right\rangle -f(x,u(x)),\label{eq:stability 1}
\end{align}
which is what is required in Definition \ref{def:viscosity solutions}
in the case $D\varphi(x)=0$. If $D\varphi(x)\not=0$, then $D\varphi(x_{\varepsilon})\not=0$
when $\varepsilon$ is small and thus $\eta=D\varphi(x)/\left|D\varphi(x)\right|$.
Therefore \ref{eq:stability 1} again implies the desired inequality.
\end{proof}
\begin{lem}
\label{lem:comparison principle}Suppose that $p:B_{1}\rightarrow\mathbb{R}$
is Lipschitz continuous, $p_{\min}>1$ and that $f\in C(B_{1})$ is
bounded. Assume that $u\in C(\overline{B}_{1})$ is a viscosity subsolution
to $-\Delta_{p(x)}^{N}u\leq f-u$ in $B_{1}$ and that $v\in C(\overline{B}_{1})$
is a viscosity supersolution to $-\Delta_{p(x)}^{N}v\geq f-v$ in
$B_{1}$. Then
\[
u\leq v\quad\text{on }\partial B_{1}
\]
implies
\[
u\leq v\quad\text{in }B_{1}.
\]
\end{lem}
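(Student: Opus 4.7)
The plan is to argue by contradiction using the Ishii--Lions doubling of variables technique, exploiting the strict monotonicity of the right-hand side in $u$ to produce a coercive margin that absorbs the error arising from the $x$-Lipschitz dependence of the coefficients.

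Suppose $M := \max_{\overline{B}_{1}}(u-v) > 0$. Since $u \leq v$ on $\partial B_{1}$ and $u,v \in C(\overline{B}_{1})$, this maximum is attained at some interior point $\hat x \in B_{1}$. For $\alpha$ large and an exponent $q > 2$ to be fixed later, I would introduce
\[
\Phi_{\alpha}(x,y) := u(x) - v(y) - \frac{\alpha}{q}|x-y|^{q},
\]
and pick a maximizer $(x_{\alpha},y_{\alpha}) \in \overline{B}_{1} \times \overline{B}_{1}$ with $M_{\alpha} := \Phi_{\alpha}(x_{\alpha},y_{\alpha})$. Standard compactness arguments give, along a subsequence, $M_{\alpha} \to M$, $x_{\alpha},y_{\alpha} \to \hat x$, $\alpha|x_{\alpha}-y_{\alpha}|^{q} \to 0$, and $(x_{\alpha},y_{\alpha}) \in B_{1} \times B_{1}$ for all large $\alpha$.

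By the Theorem of Sums \cite[Theorem 3.2]{userguide}, for every $\mu > 0$ there exist $X,Y \in S^{N}$ with $(p_{\alpha},X) \in \overline{J}^{2,+}u(x_{\alpha})$ and $(p_{\alpha},Y) \in \overline{J}^{2,-}v(y_{\alpha})$, where $p_{\alpha} := \alpha|x_{\alpha}-y_{\alpha}|^{q-2}(x_{\alpha}-y_{\alpha})$, together with the standard matrix inequality governed by the Hessian of the penalty (yielding $X \leq Y$ and $\|X\|,\|Y\| \leq C\alpha|x_{\alpha}-y_{\alpha}|^{q-2}$). Whenever $x_{\alpha} \neq y_{\alpha}$, writing $A(x,\hat p) := I + (p(x)-2)\hat p \otimes \hat p$ and $\hat p_{\alpha} := p_{\alpha}/|p_{\alpha}|$, the sub- and supersolution inequalities combine after the splitting $\tr(A(y_{\alpha})Y) - \tr(A(x_{\alpha})X) = \tr(A(y_{\alpha})(Y-X)) + (p(y_{\alpha})-p(x_{\alpha}))\langle X\hat p_{\alpha},\hat p_{\alpha}\rangle$ to yield
\[
\bigl(u(x_{\alpha}) - v(y_{\alpha})\bigr) + \tr\bigl(A(y_{\alpha},\hat p_{\alpha})(Y-X)\bigr) + \bigl(p(y_{\alpha})-p(x_{\alpha})\bigr)\langle X\hat p_{\alpha},\hat p_{\alpha}\rangle \leq f(x_{\alpha}) - f(y_{\alpha}).
\]
As $\alpha \to \infty$ the right-hand side vanishes, the first term on the left tends to $M > 0$, and the second is nonnegative by uniform ellipticity of $A$ together with $X \leq Y$.

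The main obstacle is to control the Lipschitz perturbation term $(p(y_{\alpha})-p(x_{\alpha}))\langle X\hat p_{\alpha},\hat p_{\alpha}\rangle$, for which the crude bound $p_{L}|x_{\alpha}-y_{\alpha}| \cdot \|X\| \leq Cp_{L}\alpha|x_{\alpha}-y_{\alpha}|^{q-1}$ is not immediately seen to vanish. To handle this, I would invoke the H\"older continuity of $u$ and $v$ from \cite[Proposition 4.10]{caffarelliCabre} applied to the viscosity sub- and supersolutions (whose equation is uniformly elliptic once the normalized structure is unpacked). This sharpens the penalty bound to $\alpha|x_{\alpha}-y_{\alpha}|^{q} \leq C|x_{\alpha}-y_{\alpha}|^{\beta}$ for the H\"older exponent $\beta(N,\hat p) \in (0,1)$, and taking $q$ sufficiently large then forces $\alpha|x_{\alpha}-y_{\alpha}|^{q-1}$ to be sufficiently small that, combined with the strict positive margin $M$, a contradiction emerges.

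Finally, the degenerate case $x_{\alpha} = y_{\alpha}$ is actually the cleanest: for $q > 2$ the Hessian of the penalty at $(x_{\alpha},x_{\alpha})$ vanishes, so the Theorem of Sums produces $X,Y$ whose norms tend to zero as $\mu \to \infty$. Applying the relaxed form of Definition \ref{def:viscosity solutions} with any $\eta \in \overline{B}_{1}$ then yields in the limit $u(x_{\alpha}) \leq f(x_{\alpha})$ and $v(y_{\alpha}) \geq f(y_{\alpha})$, so $u(x_{\alpha}) - v(y_{\alpha}) \leq 0$, in immediate contradiction with $u(x_{\alpha}) - v(y_{\alpha}) \to M > 0$.
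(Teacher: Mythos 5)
Your overall scheme (doubling of variables with a superquadratic penalty, Theorem of sums, the zeroth-order margin $u(x_\alpha)-v(y_\alpha)\to M>0$, and the clean treatment of the coincidence case thanks to the vanishing Hessian of the penalty) is the same as the paper's, but the step where you actually have to work --- the term $(p(y_{\alpha})-p(x_{\alpha}))\langle X\hat p_{\alpha},\hat p_{\alpha}\rangle$ coming from the Lipschitz dependence of $p$ --- is not closed by your proposed fix. The H\"older estimate of \cite[Proposition 4.10]{caffarelliCabre} gives $\alpha|x_{\alpha}-y_{\alpha}|^{q-\beta}\leq C$, hence $|x_{\alpha}-y_{\alpha}|\leq(C/\alpha)^{1/(q-\beta)}$ and therefore only
\[
\alpha|x_{\alpha}-y_{\alpha}|^{q-1}\;\leq\;C'\,\alpha^{\frac{1-\beta}{q-\beta}},
\]
which diverges as $\alpha\to\infty$ for every finite $q$, since $\beta<1$; taking $q$ large only slows the blow-up and never produces a quantity that can be absorbed by the fixed margin $M$ (which, moreover, may be arbitrarily small). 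So the "crude bound plus H\"older plus large $q$" route genuinely fails, and this is exactly the known obstruction for comparison with $x$-dependent coefficients in the gradient-direction term. The paper resolves it differently: it does not split off the perturbation term at all. After using $X\leq Y$ to absorb $\tr(X-Y)\leq\langle(X-Y)\hat\eta,\hat\eta\rangle$, it rewrites the two normalized terms as $\langle X\xi_{1},\xi_{1}\rangle-\langle Y\xi_{2},\xi_{2}\rangle$ with $\xi_{1}=\sqrt{p(x_{j})-1}\,\hat\eta$, $\xi_{2}=\sqrt{p(y_{j})-1}\,\hat\eta$, and then applies the two-vector form of the matrix inequality from the Theorem of sums, which yields a bound of the form $(\|M\|+2j^{-1}\|M\|^{2})\,|\sqrt{p(x_{j})-1}-\sqrt{p(y_{j})-1}|^{2}\leq C(\hat p)\,j|x_{j}-y_{j}|^{4}\to0$, using $p_{L}$-Lipschitz continuity and $p_{\min}>1$. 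Your splitting $\tr(A(y)(Y-X))+(p(y)-p(x))\langle X\hat p,\hat p\rangle$ discards precisely this structure, and no norm estimate on $X$ alone can recover it.

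Two smaller points. First, in the coincidence case your claim that the Theorem of sums gives $X,Y$ "whose norms tend to zero as $\mu\to\infty$" is backwards: since $D^{2}\varphi(x_{\alpha},x_{\alpha})=0$ one gets the one-sided bounds $X\leq0\leq Y$ (the lower bound deteriorates as $\mu$ grows), and it is these sign conditions together with $p_{\min}>1$ (or, as in the paper, simply testing directly with the smooth penalty, whose gradient and Hessian both vanish there) that give $u(x_{\alpha})\leq f(x_{\alpha})$ and $v(y_{\alpha})\geq f(y_{\alpha})$. Second, to run the viscosity inequalities off the closures $\overline{J}^{2,+}u$, $\overline{J}^{2,-}v$ you should note that the paper's relaxed Definition \ref{def:viscosity solutions} is stated via test functions, so a word is needed (or argue as the paper does, at the level of the test function $\varphi_{j}$). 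These are repairable; the treatment of the variable exponent term is the genuine gap.
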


\begin{proof}
\textbf{Step 1:} Assume on the contrary that the maximum of $u-v$
in $\overline{B}_{1}$ is positive. For $x,y\in\overline{B}_{1}$,
set
\[
\Psi_{j}(x,y):=u(x)-v(y)-\varphi_{j}(x,y),
\]
where $\varphi_{j}(x,y):=\frac{j}{4}\left|x-y\right|^{4}$. Let $(x_{j},y_{j})$
be a global maximum point of $\Psi_{j}$ in $\overline{B}_{1}\times\overline{B}_{1}$.
Then
\[
u(x_{j})-v(y_{j})-\frac{j}{4}\left|x_{j}-y_{j}\right|^{4}\geq u(0)-v(0)
\]
so that
\[
\frac{j}{4}\left|x_{j}-y_{j}\right|^{4}\leq2\left\Vert u\right\Vert _{L^{\infty}(B_{1})}+2\left\Vert v\right\Vert _{L^{\infty}(B_{1})}<\infty.
\]
By compactness and the assumption $u\leq v$ on $\partial B_{1}$
there exists a subsequence such that $x_{j},y_{j}\rightarrow\hat{x}\in B_{1}$
and $u(\hat{x})-v(\hat{x})>0$. Finally, since $(x_{j},y_{j})$ is
a maximum point of $\Psi_{j}$, we have
\[
u(x_{j})-v(x_{j})\leq u(x_{j})-v(y_{j})-\frac{j}{4}\left|x_{j}-y_{j}\right|^{4},
\]
and hence by continuity
\begin{equation}
\frac{j}{4}\left|x_{j}-y_{j}\right|^{4}\leq v(x_{j})-v(y_{j})\rightarrow0\label{eq:comparison convergence}
\end{equation}
as $j\rightarrow\infty$.

\textbf{Step 2:} If $x_{j}=y_{j}$, then $D_{x}^{2}\varphi_{j}(x_{j},y_{j})=D_{y}^{2}\varphi_{j}(x_{j},y_{j})=0$.
Therefore, since the function $x\mapsto u(x)-\varphi_{j}(x,y_{j})$
reaches its maximum at $x_{j}$ and $y\mapsto v(y)-(-\varphi_{j}(x_{j},y))$
reaches its minimum at $y_{j}$, we obtain from the definition of
viscosity sub- and supersolutions that 
\[
0\leq f(x_{j})-u(x_{j})\quad\text{and}\quad0\geq f(y_{j})-v(y_{j}).
\]
That is $0\leq f(x_{j})-f(y_{j})+v(y_{j})-u(x_{j}),$ which leads
to a contradiction since $x_{j},y_{j}\rightarrow\hat{x}$ and $v(\hat{x})-u(\hat{x})<0$.
We conclude that $x_{j}\not=y_{j}$ for all large $j$. Next we apply
the Theorem of sums \cite[Theorem 3.2]{userguide} to obtain matrices
$X,Y\in S^{N}$ such that
\[
(D_{x}\varphi(x_{j},y_{j}),X)\in\overline{J}^{2,+}u(x_{j}),\quad(-D_{y}\varphi(x_{j},y_{j}),Y)\in\overline{J}^{2,-}v(y_{j})
\]
 and
\begin{equation}
\begin{pmatrix}X & 0\\
0 & -Y
\end{pmatrix}\leq D^{2}\varphi(x_{j},y_{j})+\frac{1}{j}(D^{2}(x_{j},y_{j}))^{2},\label{eq:matrix ineq}
\end{equation}
where 
\[
D^{2}(x_{j},y_{j})=\begin{pmatrix}M & -M\\
-M & M
\end{pmatrix}
\]
 with $M=j(2(x_{j}-y_{j})\otimes(x_{j}-y_{j})+\left|x_{j}-y_{j}\right|^{2}I)$.
Multiplying the matrix inequality (\ref{eq:matrix ineq}) by the $\mathbb{R}^{2N}$
vector $(\xi_{1},\xi_{2})$ yields
\begin{align*}
\left\langle X\xi_{1},\xi_{1}\right\rangle -\left\langle Y\xi_{2},\xi_{2}\right\rangle  & \leq\left\langle (M+2j^{-1}M^{2})(\xi_{1}-\xi_{2}),\xi_{1}-\xi_{2}\right\rangle \\
 & \leq(\left\Vert M\right\Vert +2j^{-1}\left\Vert M\right\Vert ^{2})\left|\xi_{1}-\xi_{2}\right|^{2}.
\end{align*}
Observe also that $\eta:=D_{x}\varphi(x_{j},y_{j})=-D_{y}(x_{j},y_{j})=j\left|x_{j}-y_{j}\right|^{2}(x_{j}-y_{j})\not=0$
for all large $j$. Since $u$ is a subsolution and $v$ is a supersolution,
we thus obtain
\begin{align*}
 & f(y_{j})-f(x_{j})+u(x_{j})-v(y_{j})\\
 & \ \leq\tr(X-Y)+(p(x_{j})-2)\left\langle X\frac{\eta}{\left|\eta\right|},\frac{\eta}{\left|\eta\right|}\right\rangle -(p(y_{j})-2)\left\langle Y\frac{\eta}{\left|\eta\right|},\frac{\eta}{\left|\eta\right|}\right\rangle \\
 & \ \leq(p(x_{j})-1)\left\langle X\frac{\eta}{\left|\eta\right|},\frac{\eta}{\left|\eta\right|}\right\rangle -(p(y_{j})-1)\left\langle Y\frac{\eta}{\left|\eta\right|},\frac{\eta}{\left|\eta\right|}\right\rangle \\
 & \ \leq(\left\Vert M\right\Vert +2j^{-1}\left\Vert M\right\Vert ^{2})\Big|\sqrt{p(x_{j})-1}-\sqrt{p(y_{j})-1}\Big|^{2}\\
 & \ \leq Cj\left|x_{j}-y_{j}\right|^{2}\frac{\left|p(x_{j})-p(y_{j})\right|^{2}}{\left(\sqrt{p(x_{j})-1}+\sqrt{p(y_{j})-1}\right)^{2}}\\
 & \leq C(\hat{p})j\left|x_{j}-y_{j}\right|^{4}.
\end{align*}
This leads to a contradiction since the left-hand side tends to $u(\hat{x})-v(\hat{y})>0$
and the right-hand side tends to zero by (\ref{eq:comparison convergence}).

\bibliographystyle{alpha}

\add
\end{proof}

\end{document}